\newtheorem{theorem}{Theorem}[section]
\newtheorem*{theorem*}{Theorem}
\newtheorem{lemma}[theorem]{Lemma}
\newtheorem{proposition}[theorem]{Proposition}
\newtheorem{corollary}[theorem]{Corollary}
\newtheorem*{main-thm}{Main Theorem}
\theoremstyle{definition}
\newtheorem{definition}[theorem]{Definition}
\newtheorem{example}[theorem]{Example}
\newtheorem{remark}[theorem]{Remark}
\newtheorem{question}[theorem]{Question}
\newtheorem{notation}[theorem]{Notation}
\newcommand{\op}[1]{\operatorname{#1}}
\newcommand{\ZZ}{\mathbb{Z}}
\newcommand{\RR}{\mathbb{R}}
\newcommand{\CC}{\mathbb{C}}
\newcommand{\QQ}{\mathbb{Q}}
\providecommand*{\twoheadrightarrowfill@}{%
  \arrowfill@\relbar\relbar\twoheadrightarrow
}
\providecommand*{\twoheadleftarrowfill@}{%
  \arrowfill@\twoheadleftarrow\relbar\relbar
}
\providecommand*{\xtwoheadrightarrow}[2][]{%
  \ext@arrow 0579\twoheadrightarrowfill@{#1}{#2}%
}
\providecommand*{\xtwoheadleftarrow}[2][]{%
  \ext@arrow 5097\twoheadleftarrowfill@{#1}{#2}%
}
\begin{document}

\title{On enumerating factorizations in reflection groups}
%\title{On counting formulas for reflection factorizations of \nolinebreak regular \nolinebreak elements}
%\title{On the structure of counting formulas for reflection factorizations of regular elements}
%\title{A(n) (almost) uniform proof of the Chapuy-Stump formula}
%\title{On certain counting formulas for reflection factorizations of regular elements}
%\title{Exponential generating functions for reflection factorizations of regular elements}
\author{\Large Theo Douvropoulos\thanks{This work was supported by the European Research Council, grant ERC-2016- STG 716083 ``CombiTop''.}}

\newcommand{\Address}{{% additional braces for segregating \footnotesize
  \bigskip
  \footnotesize
	
  Theo~Douvropoulos, \textsc{IRIF, UMR CNRS 8243, Universit\'e Paris Diderot, Paris 7, France}\par\nopagebreak
  \textit{E-mail address:} \texttt{douvr001@irif.fr}	

}}

\date{}
\maketitle
\begin{abstract}
We describe an approach, via Malle's permutation $\Psi$ on the set of irreducible characters $\op{Irr}(W)$, that gives a uniform derivation of the Chapuy-Stump formula for the enumeration of reflection factorizations of the Coxeter element. It also recovers its weighted generalization by delMas, Reiner, and Hameister, and further produces structural results for factorization formulas of arbitrary regular elements. 
\end{abstract}

\section{Introduction}

A famous theorem of Cayley states that there are $n^{n-2}$ vertex-labeled trees on $n$ vertices. The same number,\footnote{The two objects are naturally related via a satisfying overcounting argument due to D{\'e}nes \cite{denes-first-paper}.} as Hurwitz knew \cite{hurwitz-1891-paper} already by the end of the $19^{\text{th}}$ century, enumerates the set of shortest length factorizations $t_1\cdots t_{n-1}=(12\cdots n)\in S_n$ of the long cycle into transpositions $t_i$. A natural generalization of this problem, that Hurwitz himself had later considered \cite{hurwitz-1901-paper}, is to enumerate such factorizations of arbitrary length.

It took almost a hundred years for the community to return to this question, but by the end of the 80's Jackson \cite[Corol.~4.2]{jackson-first-proof-of-arb-length} had computed an explicit answer. If $\op{FAC}_{S_n}(t)$ denotes the exponential generating function for the number of arbitrary length factorizations of the long cycle in transpositions (see \eqref{Eq: Frob formula no Cox}), then Jackson's result can be reinterpreted as follows: \begin{equation}
\op{FAC}_{S_n}(t)=\dfrac{e^{t\binom{n}{2}}}{n!}\big( 1-e^{-tn}\big)^{n-1}.\label{EQ: Jackson's formula}
\end{equation}

As it often happens with some of the most fascinating properties of the symmetric group, the previous statements are special cases of more general theorems that hold for all reflection groups $W$. A natural analog of the long cycle is the Coxeter element $c\in W$, while transpositions are replaced by reflections. Then, if $W$ is of rank $n$, $\mathcal{R}$ denotes its set of reflections, and $h$ is the order of $c$, Bessis \cite[Prop.~7.6]{bes-kpi1} proved the following enumeration: \begin{equation}
\#\Big\{ (t_1,\cdots,t_n)\in \mathcal{R}^n\ |\ t_1\cdots t_n=c\Big\}\ =\ \dfrac{h^nn!}{|W|}.\label{EQ: Hurwitz number for W}
\end{equation}

The $W$-analog of Jackson's formula \eqref{EQ: Jackson's formula} regarding arbitrary length factorizations was discovered (and proved) by Chapuy and Stump \cite{chapuy-stump} soon after. If $\op{FAC}_W(t)$ denotes the corresponding exponential generating function, they showed that \begin{equation}
\op{FAC}_W(t)=\dfrac{e^{t|\mathcal{R}|}}{|W|}\big( 1-e^{-th}\big)^n.\label{EQ: Chapuy-Stump formula}
\end{equation}

The reduced case \eqref{EQ: Hurwitz number for W}, which can easily be derived by calculating the leading term of $\op{FAC}_W(t)$, has a long history and appears in connection to many a mathematical endeavour. It originated in singularity theory \cites[Conj.~(3.5)]{looij-complement-bifurc}{deligne-letter-to-looijenga}, in combinatorics it appeared as the number of maximal chains in the noncrossing lattice $NC(W)$ \cite[Prop.~9]{chapoton-enumerative-properties}, and more importantly it was essential in Bessis' proof of the $K(\pi,1)$-conjecture \cite{bes-kpi1} (see \cite[\S~1]{theo-thesis} for a detailed presentation).

\subsubsection*{A uniform argument}

Neither \eqref{EQ: Chapuy-Stump formula} nor \eqref{EQ: Hurwitz number for W} are well understood. Although the statements are uniform for all well-generated groups, the proofs of Bessis and Chapuy-Stump have relied on the Shephard-Todd classification (a common misfortune for theorems regarding reflection groups). As it happens, the main goal of this paper is to provide a case-free explanation for these formulas. 

The standard approach towards results like \eqref{EQ: Jackson's formula} and \eqref{EQ: Chapuy-Stump formula} is via the Frobenius lemma (Thm.~\ref{Thm: Frobenius lemma}), which involves summing over all irreducible characters of a group $W$. For that matter, one of the main obstacles to producing a conceptual proof for \eqref{EQ: Chapuy-Stump formula} lies in that we have no nice, uniform construction of irreducible characters for complex reflection groups. Only for Weyl groups there is Springer's correspondence \cite{springer-correspondence}, which is however technically difficult for computations. 

In this work we also start with the Frobenius Lemma, but instead of explicitly computing the characters $\chi\in\op{Irr}(W)$, we group them together with respect to an invariant called the Coxeter number $c_{\chi}$ (see Defn.~\ref{Defn: Coxeter numbers}). Then, Malle's cyclic action $\Psi$ on $\op{Irr}(W)$ allows us to cancel the contribution of those $\chi$ for which $c_{\chi}$ is not a multiple of $h$. The resulting expression is very rigid (Thm.~\ref{Thm: Main, not weighted}) and the mere knowledge of bounds for the $c_{\chi}$ allows us to complete the proof.

Ours is not the first approach towards a uniform proof of \eqref{EQ: Chapuy-Stump formula}. In \cite{michel-deligne-lusztig-derivation}, Michel also considers a grouping of the characters; the partition given by Lusztig's families. This is finer (and much more technologically advanced) and although the argument gives a very satisfying connection between \eqref{EQ: Jackson's formula} and \eqref{EQ: Chapuy-Stump formula}, it requires the existence of the elusive ``spets" \cite{broue-malle-michel-split-spetses} when $W$ is not a Weyl group.

Moreover, our strategy applies in further generality and produces structural results for any regular element $g\in W$ (which become explicit formulas for a larger class of groups than the well-generated ones, see Corol.~\ref{Cor: Chapuy-Stump for dn}). In addition, a refined version in Section~\ref{Section: The weighted enumeration} recovers (uniformly) and extends the main result of \cite{reiner-delMas-Hameister} on a weighted version of the Chapuy-Stump formula \eqref{EQ: Chapuy-Stump formula}.

When $W$ is a real reflection group, all our theorems are completely case-free. In the complex case, although our approach is indeed uniform, it relies on the BMR-freeness theorem, a property of the Hecke algebra $\mathcal{H}(W)$ that is currently proven in a case-by-case way (see \S\ref{Section: On the uniformity of the proofs} for details).

\vspace{-0.1cm}
\subsubsection*{Summary}

The main results of this paper (Thm.~\ref{Thm: Main, not weighted} and Thm.~\ref{Thm: Main, weighted}) are presented in Sections~\ref{Section: Frobenius lemma via Coxeter numbers} and \ref{Section: The weighted enumeration} which can be read essentially independently of the rest. They rely on a key technical lemma (Prop.~\ref{Prop: The key technical lemma}) that describes how Malle's permutation $\Psi$ (Defn.~\ref{Defn: Malle's character permutations}) affects character values on regular elements. The material in Sections~\ref{Section: Complex reflection groups and regular elements} and \ref{Section: Hecke algebras and the technical lemma} essentially builds up to the proof of that lemma.

In particular, the two theorems are valid for all regular elements due to a characterization of the latter ones as those that have lifts in the braid group that are roots of the full twist (see Prop.~\ref{Prop: regular elements lift to roots of the full twist}). For this reason, we have reviewed in some detail in \S\,\ref{Section: Complex reflection groups and regular elements} the various statements about the topological definition of the braid group and its abelianization, the full twist and the lifts of regular elements.

Similarly in Section~\ref{Section: Hecke algebras and the technical lemma}, building towards the technical lemma, we recall the definition of the Hecke algebras given at \cite{broue-malle-rouquier-braid-hecke}, and reproduce some key character calculations from \cite{broue-michel-french-paper}. The reader who is comfortable with these concepts might skip the bulk of these sections, but we hope the presentation will prove sufficient for those unfamiliar with Hecke algebras, but who might want to further pursue their combinatorial consequences.

\section{Complex reflection groups and regular elements}
\label{Section: Complex reflection groups and regular elements}

Given a complex vector space $V\cong \CC^n$, we call a {\it finite} subgroup $W\leq \op{GL}(V)$ a {\it\color{blue} complex reflection group} if it is generated by unitary reflections. These are $\CC$-linear maps $t$ whose fixed spaces $V^t:=\op{ker}(t-\op{id})$ are hyperplanes (i.e. $\op{codim}(V^t)=1$). We further say that $W$ is {\it irreducible} if it has no stable linear subspaces apart from $V$ and $\{0\}$. Shephard and Todd \cite{shephard-and-todd} classified irreducible complex reflection groups into an infinite $3$-parameter family $G(r,p,n)$ and 34 exceptional cases indexed $G_4$ to $G_{37}$. The reader may consult the classical references \cites{kane-book-reflection-groups}{broue-book-braid-groups}{lehrer-taylor-book} for the material in this section.

We denote by $\mathcal{R}$ the set of reflections of $W$ and we write $\mathcal{A}$ for the associated arrangement of fixed hyperplanes. For such a hyperplane $H$, let $W_H$ be its pointwise stabilizer. It consists of the identity and the reflections that fix $H$. Furthermore, because unitary reflections are semisimple, $W_H$ is cyclic.

Now, if $e_H:=|W_H|$ is the size of this cyclic group and $t_H$ is one of its generators, the set of reflections $\mathcal{R}$ can be partitioned as:\begin{equation} \mathcal{R}=\bigcup_{H\in\mathcal{A}}\{ t_H,\cdots, t_H^{e_H-1} \}.\label{EQ: Reflection set partition into cyclis}\end{equation}
The reflection group $W$ acts on $\mathcal{A}$ determining orbits of hyperplanes which we will denote by $\mathcal{C}\in\mathcal{A}/W$. The size $\omega_{\mathcal{C}}$ of an orbit $\mathcal{C}$ is given by $\omega_{\mathcal{C}}:=[W:N_W(H)]$ (for any $H\in \mathcal{C}$). All elements $H\in\mathcal{C}$ have conjugate stabilizers $W_H$ and we write $e_{\mathcal{C}}$ for their common order.

With this notation, the cardinalities of the set of reflections $\mathcal{R}$ and of the set of reflecting hyperplanes $\mathcal{A}$ are given by $$|\mathcal{R}|=\sum_{\mathcal{C}\in\mathcal{A}/W}\omega_{\mathcal{C}}(e_{\mathcal{C}}-1)\quad\quad\text{ and }\quad\quad |\mathcal{A}|=\sum_{\mathcal{C}\in\mathcal{A}/W}\omega_{\mathcal{C}}.$$
Notice that if some $e_{\mathcal{C}}\neq 2$, then $|\mathcal{R}|$ and $|\mathcal{A}|$ are not equal.
\subsection{Braid groups and braid reflections}
\label{Section: Braid groups and generators of the monodromy}

We say that a vector $v\in V$ is {\it regular} if it is not contained in any reflection hyperplane and we write $V^{\op{reg}}:=V\setminus \mathcal{A}$ for the set of regular vectors. We define the {\it pure braid group} $P(W):=\pi_1(V^{\op{reg}})$ to be the fundamental group of the regular space $V^{\op{reg}}$. It is a theorem of Steinberg that the action of $W$ on $V$ is free precisely on $V^{\op{reg}}$. 

Steinberg's theorem implies that the restriction of the quotient map $\rho: V\rightarrow V/W$ on $V^{\op{reg}}$ is a Galois covering. We define the {\it \color{blue} braid group} $B(W):=\pi_1(V^{\op{reg}}/W)$ to be the fundamental group of the base of this covering and use the following short exact sequence \cite[Prop.~1.40]{hatcher-book-alg-top} to obtain a surjection $\pi:B(W)\twoheadrightarrow W$:

\begin{equation}\label{1PBW1}
1\rightarrow\underset{\overset{\rotatebox{90}{\,:=}}{\displaystyle P(W)}}{\pi_1(V^{\op{reg}})}\lhook\joinrel\xrightarrow{\ \displaystyle \rho_*\ }
\underset{\overset{\rotatebox{90}{\,:=}}{\displaystyle B(W)}}{\pi_1(V^{\op{reg}}/W)}
\xtwoheadrightarrow{\ \displaystyle \pi} W\rightarrow 1.
\end{equation}

Given a choice of a basepoint $x_0\in V^{\op{reg}}$, a loop $\bm b\in B(W)$ lifts to a path that connects $x_0$ to $\bm b_*(x_0)$ (we call this the {\it Galois action} of $b$). Then, we define $w:=\pi(\bm b)$ to be the {\it unique} element $w\in W$ such that $w\cdot x_0=b_*(x_0)$. The significance of \eqref{1PBW1} lies in that it gives a topological interpretation of $W$ as the group of deck transformations of the covering map $\rho:V^{\op{reg}}\rightarrow V^{\op{reg}}/W$.

A reflection group $W$ acts on the polynomial algebra $\CC[V]:=\op{Sym}(V^*)$ of the  space $V$ by precomposition (i.e. $w*f(\bm v):=f(w^{-1}\cdot \bm v)$). The Shephard-Todd-Chevalley theorem \cites{shephard-and-todd}{chevalley-improves-Shep-and-Todd} states then that the algebra of invariant polynomials $\CC[V]^W:=\{f\in\CC[V]:\ w*f=f\ \forall w\in W\}$ is itself a polynomial algebra. We choose homogeneous generators for it, which we denote by $f_i$ and order them by increasing degree $\op{deg}(f_i)=:d_i$. The numbers $d_i$ are independent of the choise of the $f_i$'s and are called the {\it \color{blue} fundamental degrees} of $W$.

In this setting, we can further understand the quotient morphism $\rho:V\rightarrow V/W$ by studying its algebro-geometric structure. In particular (and this holds for any finite subgroup of $\op{GL}(V)$) the map $\rho$ is a finite morphism and the quotient $V/W$ can be realized as the affine variety $\op{Spec}\big(\CC[V]^W\big)$ \cite[see][Exer.~13.2-4 and Sec.~1.7]{eisenbud-commutative-algebra}. The Shephard-Todd-Chevalley theorem states then that for reflection groups $W$, the quotient $V/W$ is itself an affine space, so that we may write: 
\begin{equation}\label{quotient map}
\CC^n\cong V\ni{\bf x}:=(x_1,\cdots,x_n)\xrightarrow{\ \displaystyle\rho\ } {\bf f}({\bf x}):=\big(f_1({\bf x}),\cdots,f_n({\bf x})\big)\in W\backslash V\cong \CC^n
\end{equation}

Now the hyperplane arrangement $\mathcal{A}$ (which is the zero set of a collection of linear forms) is an affine variety, stable under the action of $W$. Another consequence of the above is then that its image $\mathcal{H}:=\rho(\mathcal{A})\subset V/W$ is itself a variety; we call it the {\it discriminant hypersurface} of $W$. The braid group becomes thus the fundamental group of a hypersurface complement $B(W)=\pi_1(V/W-\mathcal{H})$.

Such groups have a special set of generators called {\it generators of the monodromy} \cite[Appendix~1]{broue-malle-rouquier-braid-hecke}. These are loops that descend from the basepoint following a path $\gamma$, approach a smooth point of an irreducible component of the hypersurface and make a counterclockwise\footnote{Near a smooth point, an irreducible codimension 1 divisor in $\CC^n$ looks like a line in $\RR^3$; there is a well-defined way to go around it.} loop around it, and finally return following the same path $\gamma$ backwards. 

In our case, the irreducible components of $\mathcal{H}$ are the images $\rho(\mathcal{C})$ of the hyperplane orbits $\mathcal{C}\in\mathcal{A}/W$ (again a consequence of the discussion before \eqref{quotient map}). We will therefore denote the generators of the monodromy for $B(W)$ by $\bm s_{\mathcal{C},\gamma}$. They map (via \eqref{1PBW1}) to a subset of reflections $s_H\in W$ which have determinant $\zeta_{e_{\mathcal{C}}}:=\op{exp}(2\pi i/e_{\mathcal{C}})$ and are called {\it distinguished reflections}. In fact, for this reason, we follow the terminology suggested by Brou{\'e}, Malle, Rouquier, Michel, and Bessis, (see for instance \cite[Defn.~1.6]{bes-kpi1}):

\begin{definition}
The generators of the monodromy of $B(W)$ are called {\it\color{blue} braid reflections}.
\end{definition}

\noindent The powers $\bm s_{\mathcal{C},\gamma}^{e_{\mathcal{C}}}$ are generators of the monodromy for the pure braid group: 

\begin{proposition}\cite[Prop.~2.18]{broue-malle-rouquier-braid-hecke}\label{Prop: (braid reflections)^e_c generate P(W)}
After a choice of basepoint $v\in V^{\op{reg}}$, we can lift the $\bm s_{\mathcal{C},\gamma}$ to paths in $P(W)$. Then the pure braid group $P(W)$ is generated by $\langle \bm s_{\mathcal{C},\gamma}^{e_{\mathcal{C}}}\rangle$ (for all $\mathcal{C},\gamma$) and we have $$W\cong B(W)/\langle \bm s_{\mathcal{C},\gamma}^{e_{\mathcal{C}}}\rangle,$$ where the isomorphism is the same as the one induced by the choice of $v$ via \eqref{1PBW1}.
\end{proposition}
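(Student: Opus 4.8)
The plan is to reduce the statement to the single topological fact that the fundamental group of the complement of a hyperplane arrangement is generated by its meridians (``generators of the monodromy''), applied to the arrangement complement $V^{\op{reg}}=V-\mathcal{A}$. I would first record the easy inclusion $\langle \bm s_{\mathcal{C},\gamma}^{e_{\mathcal{C}}}\rangle\subseteq P(W)$. As noted before the definition of braid reflections, each $\bm s_{\mathcal{C},\gamma}$ maps under $\pi$ to a distinguished reflection $s_H$ of determinant $\zeta_{e_{\mathcal{C}}}$, hence of order exactly $e_{\mathcal{C}}$ since it generates the cyclic group $W_H$. Therefore $\pi(\bm s_{\mathcal{C},\gamma}^{e_{\mathcal{C}}})=s_H^{e_{\mathcal{C}}}=\op{id}$, so $\bm s_{\mathcal{C},\gamma}^{e_{\mathcal{C}}}\in\ker\pi=\rho_*(P(W))$; this is precisely the assertion that the $e_{\mathcal{C}}$-th power lifts to a loop (rather than merely a path) in $V^{\op{reg}}$. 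Granting for the moment the reverse inclusion, the final isomorphism follows formally: $\langle \bm s_{\mathcal{C},\gamma}^{e_{\mathcal{C}}}\rangle=P(W)=\ker\pi$, and the short exact sequence \eqref{1PBW1} then identifies $B(W)/\langle \bm s_{\mathcal{C},\gamma}^{e_{\mathcal{C}}}\rangle$ with $W$ via the map $\bar\pi$ induced by $\pi$, which is exactly the isomorphism attached to the basepoint $v$.

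The heart of the matter is the reverse inclusion $P(W)\subseteq\langle \bm s_{\mathcal{C},\gamma}^{e_{\mathcal{C}}}\rangle$, and the key geometric input is the local description of the covering $\rho$ transverse to a hyperplane. Near a smooth point $p$ of a component $\rho(\mathcal{C})$, choose a hyperplane $H\in\mathcal{C}$ lying over it; since $W_H$ is cyclic of order $e_{\mathcal{C}}$ and acts on the normal line to $H$ by the character $\zeta_{e_{\mathcal{C}}}$ while acting freely on a punctured transverse disk, the restriction of $\rho$ to that disk is analytically the cyclic branched cover $z\mapsto z^{e_{\mathcal{C}}}$. Consequently a small meridian of $\rho(\mathcal{C})$ in the base, followed once, lifts to a path in $V^{\op{reg}}$ covering only $1/e_{\mathcal{C}}$ of a full turn around $H$ --- this is the braid reflection $\bm s_{\mathcal{C},\gamma}$ --- whereas traversing it $e_{\mathcal{C}}$ times closes up to an honest meridian loop of $\mathcal{A}$ around $H$. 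In other words, the lifted elements $\bm s_{\mathcal{C},\gamma}^{e_{\mathcal{C}}}$ are exactly the generators of the monodromy of the arrangement complement $V^{\op{reg}}=V-\mathcal{A}$.

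I would then invoke the topological fact that the fundamental group of the complement of a reduced hypersurface, and in particular of a hyperplane arrangement, is generated by its meridians. Applied to $V-\mathcal{A}$ this yields $P(W)=\pi_1(V^{\op{reg}})=\langle \bm s_{\mathcal{C},\gamma}^{e_{\mathcal{C}}}\rangle$, completing the two opposite inclusions. The main obstacle is precisely this last input: while for a line arrangement in $\CC^2$ it is classical (meridians of the punctures generate the free fundamental group of a punctured line, and a Zariski--van Kampen / generic-pencil argument propagates this to the full complement), the general case requires a careful Lefschetz-type reduction to a generic two-dimensional section, together with control of how the chosen connecting paths $\gamma$ are transported along the pencil. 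A secondary, more bookkeeping point is to check that distinct choices of $\gamma$ for a fixed orbit $\mathcal{C}$ yield $B(W)$-conjugate braid reflections, so that ranging over all $\gamma$ produces a subgroup stable under the Galois/deck action and hence the well-defined (indeed normal) subgroup in the statement.
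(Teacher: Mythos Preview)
The paper does not give its own proof of this proposition: it is quoted from \cite[Prop.~2.18]{broue-malle-rouquier-braid-hecke}, and the only indication of the argument is the lead-in sentence ``The powers $\bm s_{\mathcal{C},\gamma}^{e_{\mathcal{C}}}$ are generators of the monodromy for the pure braid group.'' Your proposal is precisely an elaboration of that sentence --- you identify the lifted $e_{\mathcal{C}}$-th powers with the meridians of $\mathcal{A}$ via the local model $z\mapsto z^{e_{\mathcal{C}}}$ of the branched cover, and then invoke the standard fact that meridians generate $\pi_1$ of an arrangement complement --- so your approach is exactly the intended one and is correct.
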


\subsection{The full twist and the abelianization of $B(W)$}

Brou{\'e}-Malle-Rouquier considered \cite[Notation~2.3]{broue-malle-rouquier-braid-hecke} a particular element of the pure braid group $P(W)$; it is fundamental in what follows and for the results in Sections \ref{Section: Frobenius lemma via Coxeter numbers} and \ref{Section: The weighted enumeration}. For an arbitrary regular vector $v\in V^{\op{reg}}$, we define $\bm \pi_v\in\pi_1(V^{\op{reg}},v)$ as the loop given by: \begin{equation}[0,1]\ni t\rightarrow e^{2\pi i t}\cdot v.\label{EQ: full twist definition} \end{equation}

If $\gamma\subset V^{\op{reg}}$ is any path between points $v, v' \in V^{\op{reg}}$, then the cylinder (or torus if $\gamma$ is a loop) $S^1\cdot \gamma$ lies completely inside $V^{\op{reg}}$. This is because $V^{\op{reg}}$, the complement of a central hyperplane arrangement, is stable under multiplication by $\CC^{\times}\supset S^1$. It is immediate from this that:

\begin{lemma}\label{Lemma: full twist is central in fund. groupoid of V^reg}\cite[Lemma~2.5]{broue-malle-rouquier-braid-hecke}\newline
For $v,v',$ and $\gamma$ as above, the loops $\gamma^{-1}\cdot\bm\pi_{v'}\cdot\gamma$ and $\bm\pi_v$ in $P(W,v)$ are homotopic. 
\end{lemma}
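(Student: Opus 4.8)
The plan is to produce a single explicit homotopy realizing the claim, exploiting the $\CC^\times$-stability of $V^{\op{reg}}$ exactly as flagged in the paragraph before the statement. Concretely, I would consider the map $H\colon [0,1]\times[0,1]\to V$ defined by $H(s,t):=e^{2\pi i t}\cdot\gamma(s)$. Each vertical slice $t\mapsto H(s,t)$ is the circle $S^1\cdot\gamma(s)$, so the whole image is the cylinder $S^1\cdot\gamma$; by the preceding remark this lies entirely in $V^{\op{reg}}$, and hence $H$ actually maps into $V^{\op{reg}}$. This is the only place where the geometry of the arrangement enters, and it is essentially handed to us.

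Next I would read off the four edges of the square. The bottom ($t=0$) and top ($t=1$) edges are both the path $s\mapsto\gamma(s)$, since $e^{0}=e^{2\pi i}=1$; the left edge ($s=0$) is $t\mapsto e^{2\pi i t}v=\bm\pi_v(t)$; and the right edge ($s=1$) is $t\mapsto e^{2\pi i t}v'=\bm\pi_{v'}(t)$. In particular the corners satisfy $H(0,0)=H(0,1)=v$ and $H(1,0)=H(1,1)=v'$, so the left edge is a loop based at $v$, the right edge a loop based at $v'$, and $\gamma$ joins the two basepoints.

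Then I would invoke the standard \emph{square lemma}: a continuous map of $[0,1]^2$ into a space exhibits, rel the two distinguished corners, a homotopy between the left edge and the concatenation (bottom)$\cdot$(right edge)$\cdot$(top reversed). Made explicit, the homotopy is the sliding family $F_r$ that runs along $\gamma$ from $v$ to $\gamma(r)$, performs the full twist $t\mapsto e^{2\pi i t}\gamma(r)$ there, and then returns along $\gamma$; this interpolates between $F_0=\bm\pi_v$ and $F_1=\gamma^{-1}\cdot\bm\pi_{v'}\cdot\gamma$, and every $F_r$ lands in $V^{\op{reg}}$ because it factors through $H$. Since the basepoint $v$ is fixed throughout, this yields the asserted homotopy inside $P(W,v)=\pi_1(V^{\op{reg}},v)$.

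The only genuine care needed, and what I would flag as the \emph{hard part} even though it is purely formal, is the bookkeeping: matching the composition convention in $\gamma^{-1}\cdot\bm\pi_{v'}\cdot\gamma$ (read right-to-left as ``go out along $\gamma$, twist, come back'') with the direction of the sliding family, and verifying that each $F_r$ is a loop based at $v$ so that the homotopy is rel basepoint at every stage. No further information about $W$ or about $\mathcal{A}$ is required: the statement is, at bottom, the observation that the full twist is defined uniformly along the $\CC^\times$-orbits, and that these orbits sweep out a cylinder over any connecting path $\gamma$.
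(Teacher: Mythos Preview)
Your proof is correct and is precisely the argument the paper has in mind: the paper simply observes that the cylinder $S^1\cdot\gamma$ lies in $V^{\op{reg}}$ and declares the lemma ``immediate from this,'' whereas you have spelled out the implicit homotopy $H(s,t)=e^{2\pi i t}\gamma(s)$ and the square-lemma bookkeeping that extracts the basepoint-preserving homotopy from it.
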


This in particular implies that $\bm\pi_v$ is always central in $P(W,v)$. Furthermore, if $v$ and $v'$ have the same image in $V^{\op{reg}}/W$, and since $\rho$ is quasihomogeneous \eqref{quotient map}, the loops $\rho_*(\bm\pi_v)$ and $\rho_*(\bm\pi_{v'})$ are identical. Now, this along with the previous lemma immediately gives:

\begin{corollary}\label{Corol. full twist is central in B(W)}\cite[from Lemma~2.22:~(2)]{broue-malle-rouquier-braid-hecke}\newline
For any regular vector $v\in V^{\op{reg}}$, the element $\rho_*(\bm\pi_v)\in B(W,\rho(v))$ is central.
\end{corollary}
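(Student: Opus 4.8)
The plan is to show directly that $\rho_*(\bm\pi_v)$ commutes with an arbitrary element $\bm b\in B(W,\rho(v))$, by lifting the relevant loops through the Galois covering $\rho\colon V^{\op{reg}}\to V^{\op{reg}}/W$ and appealing to Lemma~\ref{Lemma: full twist is central in fund. groupoid of V^reg}. Since $\rho_*(\bm\pi_v)$ lies in the normal subgroup $P(W)=\ker\pi$ of the sequence~\eqref{1PBW1}, it is enough to prove that the conjugate $\bm b^{-1}\cdot\rho_*(\bm\pi_v)\cdot\bm b$ equals $\rho_*(\bm\pi_v)$ for every $\bm b$; the case $\bm b\in P(W)$ is already covered by the centrality of $\bm\pi_v$ in $P(W,v)$ noted above, so the content lies in the directions of $B(W)$ detected by $\pi$.

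First I would lift $\bm b$. By the path-lifting property of the covering $\rho$, the loop $\bm b$ based at $\rho(v)$ lifts to a path $\gamma$ in $V^{\op{reg}}$ starting at $v$ and ending at $\bm b_*(v)=w\cdot v$, where $w:=\pi(\bm b)$; write $v':=w\cdot v$, a point in the same fiber $W\cdot v$. To identify the lift of the conjugated loop $\bm b^{-1}\cdot\rho_*(\bm\pi_v)\cdot\bm b$ I would track its three segments through the fiber: the two copies of $\bm b^{\pm1}$ lift to $\gamma^{-1}$ and $\gamma$ (by uniqueness of lifts), while the middle segment $\rho_*(\bm\pi_v)$, lifted from the basepoint $v'$, becomes the full-twist loop $\bm\pi_{v'}$ at $v'$. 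Here I would invoke the two equivariance facts underlying the definition of $\bm\pi$: the full twist is $W$-equivariant, $w\cdot\bm\pi_{v}=\bm\pi_{w\cdot v}$ directly from~\eqref{EQ: full twist definition} since $w$ is linear, and $\rho$ is $W$-invariant, so that $\rho_*(\bm\pi_{v'})=\rho_*(\bm\pi_v)$ (this is the fiber-independence recorded just before the statement). Assembling the segments exhibits the lift of $\bm b^{-1}\cdot\rho_*(\bm\pi_v)\cdot\bm b$ as a loop of the form $\gamma^{-1}\cdot\bm\pi_{v'}\cdot\gamma$ based at $v$.

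It then remains to recognize $\gamma^{-1}\cdot\bm\pi_{v'}\cdot\gamma$ as $\bm\pi_v$. This is exactly Lemma~\ref{Lemma: full twist is central in fund. groupoid of V^reg} applied to the path $\gamma$ from $v$ to $v'$: it provides a homotopy $\gamma^{-1}\cdot\bm\pi_{v'}\cdot\gamma\simeq\bm\pi_v$ rel endpoints inside $V^{\op{reg}}$. Pushing this homotopy forward along the continuous map $\rho$ yields a homotopy downstairs between $\bm b^{-1}\cdot\rho_*(\bm\pi_v)\cdot\bm b$ and $\rho_*(\bm\pi_v)$, i.e.\ their equality in $B(W,\rho(v))$; since $\bm b$ was arbitrary, $\rho_*(\bm\pi_v)$ is central. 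The only delicate point I anticipate is the bookkeeping in the second step: one must keep careful track of which point of the fiber $W\cdot v$ each lifted segment starts and ends at, and of the monodromy element $w$, so that the middle segment is correctly identified as $\bm\pi_{v'}$ and the lifted conjugate matches the precise form required by Lemma~\ref{Lemma: full twist is central in fund. groupoid of V^reg} (one should fix the path-composition convention at the outset to avoid relating $\bm\pi_v$ to the wrong conjugate of $\bm\pi_{v'}$). Once the segments are aligned, the descent of the homotopy through $\rho$ is automatic from continuity, and no further computation is needed.
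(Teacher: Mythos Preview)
Your proof is correct and follows essentially the same approach as the paper, which simply states that the corollary is immediate from Lemma~\ref{Lemma: full twist is central in fund. groupoid of V^reg} together with the observation that $\rho_*(\bm\pi_v)=\rho_*(\bm\pi_{v'})$ whenever $v,v'$ lie in the same $W$-orbit. You have carefully spelled out the lifting and bookkeeping that the paper leaves implicit, but the underlying idea---lift an arbitrary $\bm b$ to a path $\gamma$, recognize the lifted conjugate as $\gamma^{-1}\cdot\bm\pi_{v'}\cdot\gamma$, and apply the lemma---is exactly the intended one.
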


For any two basepoints $v$ and $v'$ of $V^{\op{reg}}$ and a path $\gamma$ between them, there are canonical isomorphisms between the fundamental groups $P(W,v)$ and $P(W,v')$, and between $B(W,\rho(v))$ and $B(W,\rho(v'))$. Since $\bm\pi_v$ and $\rho_*(\bm \pi_v)$ are central, their images will also be central and moreover independent of the path $\gamma$ (in fact, the previous lemma shows that they will be homotopic to $\bm\pi_{v'}$ and $\rho_*(\bm\pi_{v'})$ respectively). We therefore drop the basepoint from the notation, and for convenience we use the same symbol for the image in $B(W)$ as well:
\begin{definition}\cite[Defn.~6.12]{bes-kpi1}\label{Defn: The full twist}\newline
We call this element $\bm\pi$ defined in \eqref{EQ: full twist definition} the {\it\color{blue} full twist}. It is central in $B(W)$ and lies in $P(W)$.
\end{definition}

Brou{\'e}-Malle-Rouquier also consider \cite[Defn.~2.15]{broue-malle-rouquier-braid-hecke} length functions $l_{\mathcal{C}}:B(W)\rightarrow \ZZ$, given as periods of the differential forms $d\op{Log}(\delta_{\mathcal{C}})$ associated to discriminant polynomials $\delta_{\mathcal{C}}$ that cut out the strata $\mathcal{C}$ of $\mathcal{H}$ \cite[Defn.~2.15]{broue-malle-rouquier-braid-hecke}. For a loop $\bm g\in B(W)$, they essentially record how many radians any of its lifts $\bm g'\in P(W)$ wraps around each hyperplane in the orbit $\mathcal{C}\in\mathcal{A}/W$, and weigh the result by $e_{\mathcal{C}}$ (see [ibid,~Thm.~2.17:~Remark]). In particular, they satisfy [ibid,~Prop.~2.16] $$l_{\mathcal{C}}(\bm s_{\mathcal{C'},\gamma})=\delta_{\mathcal{C},\mathcal{C'}},$$ which, since the $\bm s_{\mathcal{C},\gamma}$ generate $B(W)$ (see discussion before Prop.~\ref{Prop: (braid reflections)^e_c generate P(W)}), implies that in fact these length functions completely determine the abelianization $B^{\op{ab}}$ of $B(W)$:

\begin{theorem}\label{Thm: The abelianization of B(W)}\cite[Thm.~2.17:(2)]{broue-malle-rouquier-braid-hecke}\newline
If $\bm s_{\mathcal{C}}^{\op{ab}}$ denotes the image of any $\bm s_{\mathcal{C},\gamma}$ in the abelianization $B^{\op{ab}}$, then $$B^{\op{ab}}=\prod_{\mathcal{C}\in \mathcal{A}/W}\langle \bm s_{\mathcal{C}}^{\op{ab}}\rangle,$$ where each $\langle \bm s_{\mathcal{C}}^{\op{ab}}\rangle$ is infinite cyclic. Moreover, for an element $\bm g\in B(W)$, we have $$ \bm g^{\op{ab}}=\prod_{\mathcal{C}\in\mathcal{A}/W}(\bm s_{\mathcal{C}}^{\op{ab}}\big)^{l_{\mathcal{C}}(\bm g)}.$$ 
\end{theorem}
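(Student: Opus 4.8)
The plan is to realize $B^{\op{ab}}$ as the free abelian group on the classes $\bm s_{\mathcal{C}}^{\op{ab}}$ by constructing mutually inverse homomorphisms between $B^{\op{ab}}$ and the standard lattice $\ZZ^{\mathcal{A}/W}$: the length functions $l_{\mathcal{C}}$ will supply a map in one direction, while the generation of $B(W)$ by braid reflections supplies a map in the other, and a one-sided inverse will force both to be isomorphisms.

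First I would set up the surjection. Since the braid reflections $\bm s_{\mathcal{C},\gamma}$ generate $B(W)$ (see the discussion preceding Prop.~\ref{Prop: (braid reflections)^e_c generate P(W)}), their images generate $B^{\op{ab}}$. The crucial reduction is that the class $\bm s_{\mathcal{C},\gamma}^{\op{ab}}$ does not depend on the path $\gamma$: for a fixed orbit $\mathcal{C}$, the braid reflections are meridians around the same irreducible component $\rho(\mathcal{C})$ of the discriminant $\mathcal{H}$, and the meridians of a fixed irreducible component of a hypersurface are mutually conjugate in the fundamental group of the complement. Hence all $\bm s_{\mathcal{C},\gamma}$ with the same $\mathcal{C}$ acquire a common image $\bm s_{\mathcal{C}}^{\op{ab}}$, and $B^{\op{ab}}$ is generated by the $|\mathcal{A}/W|$ elements $\{\bm s_{\mathcal{C}}^{\op{ab}}\}_{\mathcal{C}}$. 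This yields a surjection $\phi:\ZZ^{\mathcal{A}/W}\twoheadrightarrow B^{\op{ab}}$ sending the standard basis vector $\bd{e}_{\mathcal{C}}$ to $\bm s_{\mathcal{C}}^{\op{ab}}$.

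Next I would use the length functions to produce the inverse. Each $l_{\mathcal{C}}$ is the period of the closed logarithmic form $d\op{Log}(\delta_{\mathcal{C}})$ along a loop, so it is additive under concatenation and depends only on the homology class; being a homomorphism into the abelian group $\ZZ$, it descends to $L_{\mathcal{C}}:B^{\op{ab}}\to\ZZ$, and assembling these over all orbits gives $L:B^{\op{ab}}\to\ZZ^{\mathcal{A}/W}$. The normalization $l_{\mathcal{C}}(\bm s_{\mathcal{C}',\gamma})=\delta_{\mathcal{C},\mathcal{C}'}$ means $L(\bm s_{\mathcal{C}}^{\op{ab}})=\bd{e}_{\mathcal{C}}$, so $L\circ\phi=\op{id}_{\ZZ^{\mathcal{A}/W}}$. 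Consequently $\phi$ is injective as well as surjective, hence an isomorphism, and $B^{\op{ab}}\cong\ZZ^{\mathcal{A}/W}$ is free abelian with basis the $\bm s_{\mathcal{C}}^{\op{ab}}$, which is precisely the claimed internal direct product of infinite cyclic groups. For the explicit formula, I would write an arbitrary element in this basis as $\bm g^{\op{ab}}=\prod_{\mathcal{C}}(\bm s_{\mathcal{C}}^{\op{ab}})^{n_{\mathcal{C}}}$ and apply $l_{\mathcal{C}'}$ to both sides; the normalization forces $n_{\mathcal{C}'}=l_{\mathcal{C}'}(\bm g)$, giving $\bm g^{\op{ab}}=\prod_{\mathcal{C}}(\bm s_{\mathcal{C}}^{\op{ab}})^{l_{\mathcal{C}}(\bm g)}$.

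The main obstacle is geometric rather than algebraic. The formal part of the argument -- matching two lattices of equal rank through a one-sided inverse -- is immediate once two inputs are granted: that each $l_{\mathcal{C}}$ is a well-defined $\ZZ$-valued homomorphism (resting on the closedness and integrality of the periods of $d\op{Log}(\delta_{\mathcal{C}})$), and, more substantively, that the braid reflections attached to a single orbit are conjugate, so that $\bm s_{\mathcal{C}}^{\op{ab}}$ is genuinely path-independent and the generating set has only $|\mathcal{A}/W|$ elements. This last point, the meridian-conjugacy for irreducible components of $\mathcal{H}$, is where the real content of the theorem lies; everything else follows formally.
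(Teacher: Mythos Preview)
Your argument is correct and follows exactly the route the paper sketches in the paragraph preceding the theorem: the braid reflections generate $B(W)$, they are conjugate within each orbit so their images in $B^{\op{ab}}$ depend only on $\mathcal{C}$, and the length functions satisfying $l_{\mathcal{C}}(\bm s_{\mathcal{C}',\gamma})=\delta_{\mathcal{C},\mathcal{C}'}$ furnish the one-sided inverse that forces freeness. The paper does not give a self-contained proof (it cites \cite[Thm.~2.17:(2)]{broue-malle-rouquier-braid-hecke}), so you have in fact written out the formal argument that the paper leaves implicit.
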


\noindent By definition the full twist $\bm \pi$ rotates once around each of the $\omega_{\mathcal{C}}$-many hyperplanes in any orbit $\mathcal{C}$: 

\begin{corollary}\label{Cor: the abelianization of the full twist}\cite[Cor.~2.26 and Lemma~2.22:(2)]{broue-malle-rouquier-braid-hecke}\newline
Let $\bm \pi^{\op{ab}}$ be the image in $B^{\op{ab}}$ of the full twist $\bm \pi$. Then we have $$\bm\pi^{\op{ab}}=\prod_{\mathcal{C}\in\mathcal{A}/W}\big(\bm s_{\mathcal{C}}^{\op{ab}}\big)^{e_{\mathcal{C}}\cdot \omega_{\mathcal{C}}}.$$
\end{corollary}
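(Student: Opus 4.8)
The plan is to reduce the whole statement to the abelianization formula of Theorem~\ref{Thm: The abelianization of B(W)}, which reads $\bm g^{\op{ab}}=\prod_{\mathcal{C}\in\mathcal{A}/W}(\bm s_{\mathcal{C}}^{\op{ab}})^{l_{\mathcal{C}}(\bm g)}$. Applying this to $\bm g=\bm\pi$, the corollary becomes equivalent to the single numerical claim $l_{\mathcal{C}}(\bm\pi)=e_{\mathcal{C}}\cdot\omega_{\mathcal{C}}$ for every orbit $\mathcal{C}\in\mathcal{A}/W$. So the entire task is to evaluate each length function on the full twist, and once this is done the conclusion follows by direct substitution.

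To compute $l_{\mathcal{C}}(\bm\pi)$ I would work with the period description $l_{\mathcal{C}}(\bm g)=\frac{1}{2\pi i}\oint_{\bm g}d\op{Log}(\delta_{\mathcal{C}})$ and pull it back to $V^{\op{reg}}$ along $\rho$. The crucial point that makes the full-twist case clean is that $\bm\pi$ already lies in $P(W)$ (Definition~\ref{Defn: The full twist}) and is represented by the \emph{explicit} lift $t\mapsto e^{2\pi i t}\cdot v$, so that upstairs we are integrating an honest closed form over an honest loop and all winding numbers are genuine integers. The key algebro-geometric input is that the pullback of the discriminant component splits, up to a nonzero scalar, as $\rho^{*}\delta_{\mathcal{C}}=\prod_{H\in\mathcal{C}}\alpha_{H}^{\,e_{\mathcal{C}}}$, where $\alpha_{H}$ is a linear form cutting out $H$ and the exponent $e_{\mathcal{C}}$ is exactly the ramification order with which each hyperplane of the orbit enters the discriminant (one checks this already in rank one, where $u=x^{2}$ gives $\delta=u$ and $\rho^{*}\delta=\alpha_{H}^{\,2}$). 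Consequently $\rho^{*}\,d\op{Log}(\delta_{\mathcal{C}})=e_{\mathcal{C}}\sum_{H\in\mathcal{C}}d\op{Log}(\alpha_{H})$.

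The evaluation is then immediate from linearity of $\alpha_H$: since $\alpha_{H}(e^{2\pi i t}\cdot v)=e^{2\pi i t}\,\alpha_{H}(v)$ and $\alpha_{H}(v)\neq 0$ because $v$ is regular, each period $\frac{1}{2\pi i}\oint d\op{Log}(\alpha_{H})$ along the lift equals the winding number $1$. Summing over the $\omega_{\mathcal{C}}=|\mathcal{C}|$ hyperplanes of the orbit and carrying the factor $e_{\mathcal{C}}$ yields $l_{\mathcal{C}}(\bm\pi)=e_{\mathcal{C}}\cdot\omega_{\mathcal{C}}$, as needed. Conceptually this records nothing more than that the full twist winds exactly once around every reflecting hyperplane, each winding being counted with multiplicity $e_{\mathcal{C}}$.

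The only real friction I anticipate is bookkeeping around normalizations rather than anything substantive: one must pin down the precise convention of \cite{broue-malle-rouquier-braid-hecke} for $l_{\mathcal{C}}$ and confirm that the weight $e_{\mathcal{C}}$---whether read as built into the definition or, as above, as the multiplicity of $\alpha_{H}$ in $\rho^{*}\delta_{\mathcal{C}}$---is consistent with the normalization $l_{\mathcal{C}}(\bm s_{\mathcal{C}',\gamma})=\delta_{\mathcal{C},\mathcal{C}'}$. This same consistency can be used to double-check the answer without the period computation: by Proposition~\ref{Prop: (braid reflections)^e_c generate P(W)} the element $\bm s_{\mathcal{C},\gamma}^{\,e_{\mathcal{C}}}$ is a generator of the monodromy of $P(W)$ that winds once around a single hyperplane of $\mathcal{C}$, and additivity of $l_{\mathcal{C}}$ together with $l_{\mathcal{C}}(\bm s_{\mathcal{C},\gamma})=1$ forces $l_{\mathcal{C}}(\bm s_{\mathcal{C},\gamma}^{\,e_{\mathcal{C}}})=e_{\mathcal{C}}$, exhibiting the weight $e_{\mathcal{C}}$ per full winding that the period integral produces.
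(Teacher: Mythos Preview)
Your proposal is correct and follows essentially the same approach as the paper. The paper itself gives only a one-line justification---``By definition the full twist $\bm\pi$ rotates once around each of the $\omega_{\mathcal{C}}$-many hyperplanes in any orbit $\mathcal{C}$''---together with the earlier remark that the length functions $l_{\mathcal{C}}$ weigh each such winding by $e_{\mathcal{C}}$; your argument is a careful unpacking of precisely that sentence via the period integral and the factorization $\rho^{*}\delta_{\mathcal{C}}=\prod_{H\in\mathcal{C}}\alpha_{H}^{\,e_{\mathcal{C}}}$.
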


\subsection{Regular elements and roots of the full twist}
\label{Section:Regular elements and roots of the full twist}

Although our initial purpose for this project was to give a uniform proof of the Chapuy-Stump formula \eqref{EQ: Chapuy-Stump formula} which regards Coxeter elements, it soon became clear that the techniques developed (see Lemma~\ref{Lemma: only multiples of |g| contribute}) apply to the larger class of Springer-regular elements. The crucial property these elements share is that they lift to roots of (powers of) the full twist $\bm \pi$ (Defn.~\ref{Defn: The full twist}).

\begin{definition}\cite{springer-regular-elements}\label{Defn: regular element}
Recall the space $V^{\op{reg}}$ of {\it regular} vectors; namely those that do not lie in any hyperplane $H\in\mathcal{A}$. We say that an element $g\in W$ is {\color{blue} $\zeta$-regular} if it has a regular $\zeta$-eigenvector; all $\zeta$-regular elements are conjugate \cite[Corol.~11.25]{lehrer-taylor-book}. The order $d$ of a $\zeta$-regular element $g$ is equal to the order of $\zeta$ [ibid] and is called a {\it \color{blue} regular number}.
\end{definition}

For real reflection groups $W$, the product $c$ of the simple generators (in any order) is called a Coxeter element, after Coxeter who first computed its order $h$ and eigenvalues \cite{coxeter-duke}. At the same paper, Coxeter observed (and Steinberg later \cite{steinberg-finite-reflection-groups} gave a uniform proof of the fact) that $h$ determines the number of hyperplanes $N$ via the equation $nh=2N$, where $n$ is the dimension of the ambient space $V$. Steinberg's work easily implies also that $c$ is an $e^{2\pi i/h}$-regular element.

Building on that, Gordon and Griffeth (but see also the beginning of \S\,\ref{Section: Malle's character permutations and the technical lemma}) define a Coxeter number\footnote{It is not a priori clear that $h$ is an integer; see Corol.~\ref{Corol: local cox num are integers andbounds}.} for all complex reflection groups as $h=(|\mathcal{R}|+|\mathcal{A}|)/n$. Then, for an arbitrary $W$, a {\it \color{blue} Coxeter element} is defined as a $e^{2\pi i/h}$-regular element. It turns out that Coxeter elements exist precisely when $W$ is {\it \color{blue} well-generated}; namely when it is generated by $n$ reflections. 

It is easy to produce lifts $\bm g\in B(W)$ of regular elements $g\in W$. Indeed, let $g$ be a $\zeta$-regular element, with $\zeta=\op{exp}(2\pi i m/d)$, $(m,d)=1$, and let $x_0$ be one of its $\zeta$-eigenvectors. Consider now the path $\bm \pi_{x_0,\zeta}$ in $V^{\op{reg}}$ that connects $x_0$ and $\zeta x_0$ and is defined by \begin{equation}[0,1]\ni t\rightarrow e^{2\pi i tm/d}x_o.\label{EQ: root of power of full twist}\end{equation}
Since $\zeta x_0=g\cdot x_0$, this determines a loop in $V^{\op{reg}}/W$ that would lift the element $g\in W$, if $x_0$ was the basepoint for $P(W)$. We can easily adjust the construction to deal with a basepoint that is not an eigenvector, and comparing \eqref{EQ: full twist definition} and \eqref{EQ: root of power of full twist} gives:

\begin{proposition}\cite[Prop.~5.24]{broue-book-braid-groups}\label{Prop: regular elements lift to roots of the full twist}
Let $\zeta=\op{exp}(2\pi i m/d)$ be a primitie $d^{\text{th}}$ root of unity, and let $g$ be a $\zeta$-regular element of $W$. Then, $g$ has a lift $\bm g\in B(W)$ such that $\bm g^d=\bm \pi ^m$.
\end{proposition}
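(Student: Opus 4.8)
The plan is to make the construction sketched just above the statement precise and then to compute the $d$-th power of the resulting braid by lifting it through the covering $\rho$. First I would fix a regular $\zeta$-eigenvector $x_0$ of $g$ and consider the path $\bm\pi_{x_0,\zeta}$ of \eqref{EQ: root of power of full twist}, namely $t\mapsto e^{2\pi i tm/d}x_0$. This stays inside $V^{\op{reg}}$ because the regular locus, being the complement of a central arrangement, is stable under multiplication by $\CC^{\times}$, and it runs from $x_0$ to $\zeta x_0=g\cdot x_0$. Since $\rho$ sends $x_0$ and $g\cdot x_0$ to the same point, the path descends to a genuine loop in $V^{\op{reg}}/W$ based at $\rho(x_0)$; I let $\bm g\in B(W,\rho(x_0))$ be its homotopy class. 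By the definition of the Galois action and of $\pi$, the lift of $\bm g$ starting at $x_0$ ends at $g\cdot x_0$, whence $\pi(\bm g)=g$ and $\bm g$ is the desired lift.

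The substance of the argument is the evaluation of $\bm g^d$, which I would carry out by lifting the $d$-fold concatenation one factor at a time. Because $\rho$ is a Galois covering with deck group $W$, the lift of the $(k+1)$-st factor starts at $g^k\cdot x_0=\zeta^k x_0$ and equals the translate $g^k\cdot\bm\pi_{x_0,\zeta}$. The key point is that $g$ is $\CC$-linear and hence commutes with the scalar action, so that $g^k\cdot e^{2\pi i tm/d}x_0=e^{2\pi i(t+k)m/d}x_0$; the $d$ translated arcs therefore fit together into the single path $u\mapsto e^{2\pi i um/d}x_0$ on $u\in[0,d]$. At $u=d$ this returns to $e^{2\pi i m}x_0=x_0$, so the lift is closed, consistent with $g^d=\op{id}$ (recall $d$ is the order of $g$).

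Finally I would identify this lift. Rescaling $[0,d]$ to $[0,1]$ gives $u\mapsto e^{2\pi i mu}x_0$, which is exactly the $m$-fold iterate of the full-twist loop \eqref{EQ: full twist definition} based at $x_0$. Pushing down through $\rho_*$ and recalling that $\rho_*$ of the full-twist loop is the central element $\bm\pi$ (Defn.~\ref{Defn: The full twist}), I obtain $\bm g^d=\bm\pi^m$ in $B(W)$. If the chosen basepoint of $B(W)$ differs from $\rho(x_0)$, I transport along a fixed path; this changes nothing, since by Lemma~\ref{Lemma: full twist is central in fund. groupoid of V^reg} and the discussion after it the full twist is central and independent of the basepoint. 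I do not expect a genuine obstacle here: the only real care lies in the bookkeeping of the middle paragraph, where one must make explicit that it is precisely the $\CC$-linearity of $g$ (together with $x_0$ being an eigenvector) that lets the $d$ translated arcs assemble into an honest $m$-th power of $\bm\pi$, and that the counterclockwise orientation convention of \eqref{EQ: full twist definition} yields $\bm\pi^m$ rather than $\bm\pi^{-m}$.
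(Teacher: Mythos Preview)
Your proof is correct and follows essentially the same approach as the paper. The only cosmetic difference is that the paper fixes the general basepoint $v$ from the outset and conjugates $\rho(\bm\pi_{x_0,\zeta})$ by a path $\rho(\gamma)$ from $\rho(v)$ to $\rho(x_0)$, whereas you compute first at the eigenvector basepoint $\rho(x_0)$ and transport at the end; your lifting computation (identifying the $k$-th arc as $g^k\cdot\bm\pi_{x_0,\zeta}=\bm\pi_{\zeta^k x_0,\zeta}$) is exactly the content of the paper's line ``the loop $\rho(\bm\pi_{x_0,\zeta})^d$ lifts to $\bm\pi_{x_0}^m=\bm\pi_{\zeta^{d-1}x_0,\zeta}\cdots\bm\pi_{x_0,\zeta}$'', only spelled out in slightly more detail.
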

\begin{proof}
Let $v\in V^{\op{reg}}$ be the basepoint of $P(W)$ and $\gamma$ an arbitrary path in $V^{\op{reg}}$ that connects $v$ with a $\zeta$-eigenvector $x_0$ of $g$. We view $g$ as a deck transformation of the covering $\rho: V^{\op{reg}}\rightarrow V^{\op{reg}}/W$ and consider the path $(g\cdot \gamma^{-1})\cdot \bm \pi_{x_0,\zeta}\cdot \gamma$.  It connects the points $v$ and $g\cdot v$ and hence determines the following element of the braid group $B(W)$: $$\bm g:=\rho(\gamma)^{-1}\cdot \rho(\bm \pi_{x_0,\zeta})\cdot \rho(\gamma).$$
Because $g$ acts on the line $\CC\cdot x_0$ as multiplication by $\zeta$, we can see that the loop $\rho( \bm \pi_{x_0,\zeta})^d$ lifts to the element $\bm \pi_{x_0}^m=\bm \pi_{\zeta^{d-1}\cdot x_0,\zeta}\cdots \bm\pi_{x_0,\zeta}$ (recall the definition of $\bm \pi_{x_0}$ in \eqref{EQ: full twist definition}). This immediately gives 
$$\bm g^d=\rho(\gamma)^{-1}\cdot \rho(\bm \pi_{x_0})^m \cdot \rho(\gamma),$$ which after the discussion before Defn.~\ref{Defn: The full twist} completes the proof.
\end{proof}

\begin{remark}
The converse of the previous theorem is still true; that is, $d^{\text{th}}$ roots of the full twist exist precisely when $d$ is a regular number \cite[Thm.~12.4]{bes-kpi1}. Moreover, as with Springer-regular elements, Bessis has shown [ibid] that all $d^{th}$ roots of $\bm\pi^m$ are conjugate. Such results essentially ``lift" Springer theory to braid groups; they rely on garside-like structures in \cite{bessis-garside-arxiv}.

\noindent However, we should warn the reader that this does not imply the existence of nice sections from $W$ to $B(W)$. Moreover, even for Coxeter groups, where the existence of simple systems allows us to lift $W$ in $B^+(W)$, these lifts do not satisfy the previous properties. That is, conjugate regular elements (in particular, Coxeter elements) lift to not necessarily conjugate elements in $B(W)$.
\end{remark}

If $\bm g$ is a $d^{\text{th}}$ root of the full twist, Thm.~\ref{Thm: The abelianization of B(W)} and Corol.~\ref{Cor: the abelianization of the full twist} imply that $l_{\mathcal{C}}(\bm g)\cdot d=e_{\mathcal{C}}\omega_{\mathcal{C}}$. This proves the following as in \cite[Thm.~3.2]{reiner-delMas-Hameister} (but see also \cite[Prop.~5.17:(2)]{broue-book-braid-groups}):

\begin{corollary}\label{Corol: regular numbers divide e_co_c}
For any orbit $\mathcal{C}\in\mathcal{A}/W$, a regular number $d$ always divides the quantity $e_{\mathcal{C}}\cdot\omega_\mathcal{C}$.
\end{corollary}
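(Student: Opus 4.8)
The plan is to transport the identity $\bm g^d=\bm\pi^m$ into the abelianization $B^{\op{ab}}$ and read off the divisibility directly from the exponents recorded there by the length functions $l_{\mathcal{C}}$. The point is that $l_{\mathcal{C}}$ takes \emph{integer} values, so any relation of the form $d\cdot(\text{integer})=e_{\mathcal{C}}\omega_{\mathcal{C}}$ immediately yields $d\mid e_{\mathcal{C}}\omega_{\mathcal{C}}$.

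Concretely, I would first fix a regular number $d$. By definition this means there is a primitive $d^{\text{th}}$ root of unity $\zeta$ admitting a $\zeta$-regular element $g$; for convenience I take $\zeta=\op{exp}(2\pi i/d)$, i.e. $m=1$ in the notation of Prop.~\ref{Prop: regular elements lift to roots of the full twist}. That proposition then furnishes a lift $\bm g\in B(W)$ with $\bm g^d=\bm\pi$. Next I would apply each length function $l_{\mathcal{C}}\colon B(W)\to\ZZ$ to this identity. Since the $l_{\mathcal{C}}$ are precisely the coordinates of the abelianization morphism of Thm.~\ref{Thm: The abelianization of B(W)}, each is a group homomorphism to $\ZZ$, so the left-hand side gives $l_{\mathcal{C}}(\bm g^d)=d\cdot l_{\mathcal{C}}(\bm g)$. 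For the right-hand side, Corol.~\ref{Cor: the abelianization of the full twist} records the image of $\bm\pi$ in $B^{\op{ab}}$ as $\prod_{\mathcal{C}}(\bm s_{\mathcal{C}}^{\op{ab}})^{e_{\mathcal{C}}\omega_{\mathcal{C}}}$; because the factors $\langle\bm s_{\mathcal{C}}^{\op{ab}}\rangle$ form a \emph{direct} product of infinite cyclic groups (again Thm.~\ref{Thm: The abelianization of B(W)}), comparing exponents gives $l_{\mathcal{C}}(\bm\pi)=e_{\mathcal{C}}\omega_{\mathcal{C}}$. Combining the two computations yields $d\cdot l_{\mathcal{C}}(\bm g)=e_{\mathcal{C}}\omega_{\mathcal{C}}$, and integrality of $l_{\mathcal{C}}(\bm g)$ closes the argument.

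The step carrying all the genuine content is the identification $l_{\mathcal{C}}(\bm\pi)=e_{\mathcal{C}}\omega_{\mathcal{C}}$ together with the integrality of $l_{\mathcal{C}}$ (that the period of $d\op{Log}(\delta_{\mathcal{C}})$ along any braid is an integer); everything else is homomorphism bookkeeping. Both ingredients, along with the existence of the root $\bm g$, are already in hand, so I do not expect a serious obstacle---this is really a clean corollary of Thm.~\ref{Thm: The abelianization of B(W)}, Corol.~\ref{Cor: the abelianization of the full twist}, and Prop.~\ref{Prop: regular elements lift to roots of the full twist}. The one place to be slightly careful is the choice of $\zeta$: if instead one starts from a general primitive root $\zeta=\op{exp}(2\pi i m/d)$ with $(m,d)=1$, the same computation produces $d\cdot l_{\mathcal{C}}(\bm g)=m\,e_{\mathcal{C}}\omega_{\mathcal{C}}$, and then coprimality of $m$ and $d$ still forces $d\mid e_{\mathcal{C}}\omega_{\mathcal{C}}$. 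Thus the normalization $m=1$ is a convenience rather than a restriction.
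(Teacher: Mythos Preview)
Your proposal is correct and follows essentially the same route as the paper: lift the regular element to a root of the full twist via Prop.~\ref{Prop: regular elements lift to roots of the full twist}, then read off $d\cdot l_{\mathcal{C}}(\bm g)=e_{\mathcal{C}}\omega_{\mathcal{C}}$ from Thm.~\ref{Thm: The abelianization of B(W)} and Corol.~\ref{Cor: the abelianization of the full twist}. Your treatment is in fact slightly more careful than the paper's one-line sketch, since you explicitly handle the general case $\bm g^d=\bm\pi^m$ with $(m,d)=1$ and note that coprimality still yields the divisibility.
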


In Sections \ref{Section: Frobenius lemma via Coxeter numbers} and \ref{Section: The weighted enumeration} we prove some structural results for factorization enumeration formulas for arbitrary regular elements. When the order of these elements equals the highest fundamental degree $d_n$, this structural information is in fact sufficient to determine explicit formulas. We list here the corresponding types:

\begin{proposition}\cite[Prop.~4.1]{bes-zariski}\label{Prop: When d_n is regular}
Let $W$ be an irreducible complex reflection group and let $d_n$ be its largest degree. Then, $d_n$ is a regular number precisely when $W$ is a Coxeter group, or $G(r,1,n)$, $G(r,r,n)$ and $G(2r,2,2)$, or {\it any} exceptional group other than $G_{15}$.
\end{proposition}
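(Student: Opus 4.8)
The plan is to reduce the statement to the Lehrer--Springer combinatorial criterion for regularity and then to run a case analysis along the Shephard--Todd classification. Recall that a positive integer $d$ is a regular number for $W$ if and only if it divides as many degrees as codegrees: writing $a(d):=\#\{i:d\mid d_i\}$ and $b(d):=\#\{i:d\mid d_i^*\}$ for the degrees $d_i$ and codegrees $d_i^*$ of $W$, this reads $a(d)=b(d)$ (see \cite{lehrer-taylor-book}). I would first specialize this to $d=d_n$. Since $d_n$ is the largest degree, $d_n\mid d_i$ forces $d_i=d_n$, so $a(d_n)$ equals the multiplicity $m$ of $d_n$ in the degree multiset. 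On the codegree side the smallest codegree is always $0$, so $b(d_n)\geq 1$, and besides this it counts exactly the positive codegrees divisible by $d_n$. Thus the whole question collapses to the arithmetic comparison
\[
m \;=\; \#\{\,i : d_n \mid d_i^*\,\},
\]
a finite check once the degrees and codegrees of $W$ are tabulated.

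Next I would dispose of the easy cases. For a Coxeter group, and more generally for any well-generated $W$, the top degree $d_n$ coincides with the Coxeter number $h$ and a Coxeter element is by definition $e^{2\pi i/h}$-regular, so $d_n$ is regular with no computation; this settles all well-generated exceptional groups at once. It remains to treat the non-well-generated groups, where the interesting balances occur. For the infinite family I would feed the explicit degrees $r,2r,\dots,(n-1)r,nr/p$ together with the known codegrees of $G(r,p,n)$ into the displayed criterion. For $p=1$ and $p=r$ one finds $m=1$ and no positive codegree divisible by $d_n$, so these are regular (recovering $G(r,1,n)$ and $G(r,r,n)$); for $1<p<r$ with $(n,p)\neq(2,2)$ the equality fails; and the rank-two anomaly $G(2r,2,2)$, where $d_1=d_2=2r$ forces $m=2$ and is matched by exactly the two codegrees $0$ and $2r$ divisible by $2r$, must be isolated by hand.

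Finally I would settle the exceptional groups by direct inspection of the tables. For the non-well-generated exceptionals $G_7,G_{11},G_{12},G_{13},G_{15},G_{19},G_{22},G_{31}$ one evaluates both $m$ and $\#\{i:d_n\mid d_i^*\}$; all satisfy the equality except $G_{15}$, where a positive codegree divisible by $d_n$ makes $b(d_n)$ strictly exceed $a(d_n)$, so $d_n$ fails to be regular there. Together with the well-generated case this yields exactly the list in the statement.

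The genuine work, and the main obstacle, is twofold. First one needs the correct codegree data for the general member $G(r,p,n)$ for arbitrary $p$: the codegrees are more delicate than the degrees, and it is their interaction with the top degree that produces the dichotomy $p\in\{1,r\}$ versus $1<p<r$. Second, the cases with a repeated top degree --- $G(2r,2,2)$ in the family and $G_7,G_{11},G_{19},G_{22}$ among the exceptionals --- behave differently from the generic $m=1$ situation and must be checked separately; it is precisely the breakdown of this balance for a single sporadic codegree that singles out $G_{15}$. The argument is otherwise bookkeeping, the conceptual content being entirely absorbed into the Lehrer--Springer criterion.
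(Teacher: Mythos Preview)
The paper does not supply its own proof of this proposition; it is quoted from \cite[Prop.~4.1]{bes-zariski} and used as a black box. Your plan---specialize the Lehrer--Springer criterion $a(d)=b(d)$ to $d=d_n$, dispose of all well-generated groups at once via the existence of Coxeter elements, and then check the finitely many non-well-generated cases (in the infinite family and among the exceptionals) against the tabulated degrees and codegrees---is exactly how such a classification statement is verified, and is in essence what Bessis does as well. So there is nothing in the present paper to compare your argument against, and your outline is a correct reconstruction of the standard proof.

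One minor slip in your bookkeeping: $G_{22}$ has degrees $12$ and $20$, so its top degree is not repeated; among the non-well-generated exceptionals only $G_7$, $G_{11}$, $G_{19}$ have $m=2$. This does not affect the method, only the case-split.
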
 

\begin{remark}\label{Remark: We can disregard the basepoint}
We have tried to carefully show in this section that the choice of the basepoint $v\in V^{\op{reg}}$ does not affect the theorems regarding the full twist, the abelianization, and the regular elements. At this point we choose a basepoint $v$, once and for all, and in what follows we consider the surjection $B(W)\twoheadrightarrow W$ in \eqref{1PBW1} fixed.
\end{remark}

\section{Frobenius lemma via Coxeter numbers}
\label{Section: Frobenius lemma via Coxeter numbers}

The lemma of Frobenius, which does in fact go back to Frobenius and 1896 \cite{frobenius-collected-works}, gives a representation theoretic formula for enumerating factorizations of group elements, when the factors belong to given (unions of) conjugacy classes:

\begin{theorem}\cite[App.~A.1.3]{LZ-graphs-on-surfaces} \label{Thm: Frobenius lemma}
Let $G$ be a finite group and $A_i\subset G$, $i=1\dots l$, subsets that are closed under conjugation. Then the number of factorizations $t_1\cdots t_l=g$ of an element $g\in W$, where each factor $t_i$ belongs to $A_i$, is given by$$
\dfrac{1}{|G|}\sum_{\chi\in \widehat{G}}\chi(1)\cdot\chi(g^{-1})\cdot\dfrac{\chi(A_1)}{\chi(1)}\cdots\dfrac{\chi(A_l)}{\chi(1)},$$ where $\widehat{G}$ denotes the (complete) set of irreducible characters of $G$ and $\chi(A):=\sum_{g\in A}\chi(g)$.  
\end{theorem}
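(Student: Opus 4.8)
The plan is to prove this by a standard character-theoretic computation, counting the factorizations via the group algebra $\CC[G]$ and extracting the answer through the orthogonality relations. First I would introduce, for each conjugation-closed subset $A_i$, the class sum $z_i := \sum_{t\in A_i} t \in \CC[G]$. Because each $A_i$ is a union of conjugacy classes, each $z_i$ lies in the center $Z(\CC[G])$ of the group algebra. The number we wish to compute is precisely the coefficient of $g$ in the product $z_1 z_2 \cdots z_l$, since expanding this product records each way of writing $t_1 t_2 \cdots t_l$ with $t_i \in A_i$, and we collect those whose product equals $g$.

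The key step is to evaluate this coefficient using the representation theory of $G$. I would use that the center $Z(\CC[G])$ acts on each irreducible representation $V_\chi$ by a scalar (Schur's lemma): a central element $z$ acts as $\omega_\chi(z)\cdot \op{id}$, where the scalar is recovered by taking the trace, namely $\omega_\chi(z) = \chi(z)/\chi(1)$ with $\chi(z) := \sum_{g\in A}\chi(g)$ extended linearly. Applying this to each class sum gives $\omega_\chi(z_i) = \chi(A_i)/\chi(1)$, so that on $V_\chi$ the product $z_1\cdots z_l$ acts by the scalar $\prod_{i=1}^l \chi(A_i)/\chi(1)$. The central element $z_1\cdots z_l$ is itself a class function, so it can be re-expanded in the basis of class sums, and the coefficient of $g$ is extracted by the second orthogonality relation for the columns of the character table.

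Concretely, I would write $z_1\cdots z_l = \sum_{h} a_h\, h$ where $a_h$ depends only on the conjugacy class of $h$, and recover $a_g$ via the projection formula. The cleanest route is to use the central idempotents $e_\chi = \frac{\chi(1)}{|G|}\sum_{h\in G}\chi(h^{-1})\,h$, which satisfy $\sum_\chi e_\chi = 1$ and on which $z_1\cdots z_l$ acts by the scalar computed above. Equating $z_1\cdots z_l = \sum_\chi \big(\prod_i \chi(A_i)/\chi(1)\big)\, e_\chi$ and reading off the coefficient of $g$ yields
\[
a_g = \frac{1}{|G|}\sum_{\chi\in\widehat{G}} \chi(1)\,\chi(g^{-1})\,\prod_{i=1}^l \frac{\chi(A_i)}{\chi(1)},
\]
which is exactly the claimed formula. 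I do not expect a genuine obstacle here, as all the ingredients are classical; the only point requiring mild care is the bookkeeping in passing from the scalars $\omega_\chi$ to the coefficient $a_g$, i.e. correctly invoking the idempotent expansion (equivalently, the orthogonality relations) so that the factor $\chi(g^{-1})$ rather than $\chi(g)$ appears. This is why the formula is stated in terms of $g^{-1}$, reflecting that the coefficient extraction pairs $g$ against $\chi(g^{-1})$.
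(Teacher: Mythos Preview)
Your argument is correct and is exactly the standard proof of the Frobenius lemma: one identifies the number of factorizations with the coefficient of $g$ in the product of class sums $z_1\cdots z_l\in Z(\CC[G])$, computes the action of each $z_i$ on an irreducible $V_\chi$ as the scalar $\chi(A_i)/\chi(1)$ via Schur's lemma, and then extracts the coefficient of $g$ using the central idempotent decomposition $1=\sum_\chi e_\chi$. The bookkeeping you flag (that the coefficient of $g$ in $e_\chi$ is $\tfrac{\chi(1)}{|G|}\chi(g^{-1})$) is handled correctly.

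As for the comparison: the paper does not give its own proof of this statement. It is quoted as a classical result with a reference to \cite[App.~A.1.3]{LZ-graphs-on-surfaces} and used as a black box in the derivation of \eqref{Eq: Frob formula no Cox}. So there is no alternative argument in the paper to contrast with yours; your proposal simply supplies the (standard) proof the paper omits.
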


For a reflection group $W$, the set of reflections $\mathcal{R}$ is indeed closed under conjugation. This lemma of Frobenius implies then a simple finite-sum form for the exponential generating function of reflection factorizations of elements of $W$. If we write $\op{Fact}_{W,g}(l)$ for the number of such factorizations of length $l$, i.e.:$$
\op{Fact}_{W,g}(l):=\#\{(t_1,\cdots,t_l)\in\mathcal{R}^l\ |\ t_1\cdots t_l=g\},$$ then the lemma of Frobenius implies that $$
\op{Fact}_{W,g}(l)=\dfrac{1}{|W|}\sum_{\chi\in\widehat{W}}\chi(1)\cdot\chi(g^{-1})\cdot \Big[\frac{\chi(\mathcal{R})}{\chi(1)}\Big]^l.$$ After this, the exponential generating function for reflection factorizations of $g$ is given by:\begin{equation}
\op{FAC}_{W,g}(t):=\sum_{l\geq 0}\op{Fact}_{W,g}(l)\cdot\frac{t^l}{l!}=\dfrac{1}{|W|}\sum_{\chi\in\widehat{W}}\chi(1)\cdot\chi(g^{-1})\cdot \op{exp}\Big[t\cdot\frac{\chi(\mathcal{R})}{\chi(1)}\ \Big].\label{Eq: Frob formula no Cox} \end{equation}Notice that, remarkably, this observation that such generating functions will be expressible as finite sums of exponentials appears already in Hurwitz's paper \cite[\S\,3:(15)]{hurwitz-1901-paper}.

Now, a priori the evaluations $\chi(\mathcal{R})$ are complex numbers, but the special structure of the set of reflections $\mathcal{R}$ forces them to in fact be integers (recall that $\mathcal{A}$ denotes the set of fixed  hyperplanes):

\begin{proposition}\label{Prop: bounds for chi(R)}
The numbers $\chi(\mathcal{R})$ are integers, and they further satisfy: $$-|\mathcal{A}|\cdot\chi(1)\ \leq\ \chi(\mathcal{R})\  \leq\ |\mathcal{R}|\cdot\chi(1).$$ Both bounds are achieved; the higher only for the trivial representation, and the lower at least for the $\op{det}$ representation.
\end{proposition}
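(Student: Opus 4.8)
The plan is to avoid working with the individual values $\chi(t)$, which for a complex reflection group need not even be rational, and instead to exploit the partition \eqref{EQ: Reflection set partition into cyclis} of $\mathcal{R}$ into the non-identity elements of the cyclic stabilizers $W_H$. Writing $\chi(\mathcal{R})=\sum_{H\in\mathcal{A}}\sum_{k=1}^{e_H-1}\chi(t_H^k)$, I would treat each inner sum by restricting $\chi$ to the cyclic group $W_H=\langle t_H\rangle$ of order $e_H$, so that the entire analysis becomes local to a single hyperplane.

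The key observation is that summing any character over a finite cyclic group recovers the order times the multiplicity of the trivial constituent: by orthogonality, $\sum_{k=0}^{e_H-1}\chi(t_H^k)=e_H\cdot\langle \chi|_{W_H},\id\rangle = e_H\, m_H$, where $m_H:=\dim (V_\chi)^{W_H}$ is the dimension of the $W_H$-fixed subspace, a nonnegative integer bounded above by $\chi(1)$. Subtracting the $k=0$ term gives $\sum_{k=1}^{e_H-1}\chi(t_H^k)=e_H m_H-\chi(1)$, and summing over hyperplanes yields the clean formula $\chi(\mathcal{R})=\sum_{H\in\mathcal{A}}e_H m_H-|\mathcal{A}|\chi(1)$. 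This single identity simultaneously establishes integrality, which I regard as the only genuinely nonobvious assertion in the proposition, since \emph{a priori} $\chi(\mathcal{R})$ is merely an algebraic number.

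The bounds and their sharpness then fall out of the elementary inequality $0\le m_H\le\chi(1)$. Setting every $m_H=0$ produces the lower bound $-|\mathcal{A}|\chi(1)$, while setting every $m_H=\chi(1)$ and using $|\mathcal{R}|=\sum_H(e_H-1)$ produces the upper bound $|\mathcal{R}|\chi(1)$. For equality in the lower bound one needs $(V_\chi)^{W_H}=0$ for all $H$; this holds for $\chi=\op{det}$ because each reflection $t_H$ has $\op{det}(t_H)$ a nontrivial root of unity, so $W_H$ acts without fixed vectors on the one-dimensional determinant representation. For equality in the upper bound one needs $(V_\chi)^{W_H}=V_\chi$ for every $H$, that is, every reflection acts trivially on $V_\chi$; since the reflections generate $W$, this forces $W$ to act trivially, and irreducibility then forces $\chi=\id$. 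The honest assessment is that there is no serious obstacle here: once the per-hyperplane viewpoint is adopted the argument is completely case-free, and the only point requiring care is distinguishing the extreme cases $m_H=0$ and $m_H=\chi(1)$, which is exactly what pins down the two distinguished characters.
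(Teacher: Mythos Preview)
Your proof is correct and follows essentially the same route as the paper: both restrict $\chi$ to each cyclic stabilizer $W_H$ via the partition \eqref{EQ: Reflection set partition into cyclis} and observe that the contribution of $\{t_H,\ldots,t_H^{e_H-1}\}$ is $e_H m_H-\chi(1)$, where $m_H$ is the multiplicity of the trivial eigenvalue of $t_H$. The only cosmetic difference is that the paper computes this eigenvalue-by-eigenvalue (each $\lambda$ contributes $e_H-1$ or $-1$ according as $\lambda=1$ or not), whereas you invoke orthogonality to obtain the same expression in one stroke; the analysis of the extremal cases is identical.
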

\begin{proof}
Recall the decomposition of the set of reflections with respect to their fixed hyperplanes $H\in\mathcal{A}$ as described in \eqref{EQ: Reflection set partition into cyclis}. Keeping that notation, we choose a generator $t_H$ for each of the cyclic groups $W_H$ and write $e_H:=|W_H|$ for its order.

For each eigenvalue $\lambda$ of $t_H$ in the representation $U_{\chi}$ associated to $\chi$, the contribution of the set of reflections $\{t_H,\cdots,t_H^{e_H-1}\}$ in the evaluation of $\chi(\mathcal{R})$ equals $\sum_{k=1}^{e_H-1}\lambda^k$. Since $\lambda^{e_H}=1$, this quantity is either $e_H-1$ or $-1$ depending on whether $\lambda$ itself is $1$ or not. 

This implies the first two statements of the proposition, after noticing that the multiset of eigenvalues of $t_H$ acting on $U_{\chi}$ has $\chi(1)$-many elements. In particular, in order to recover the second inequality we use that $\sum_{H\in\mathcal{A}}(e_H-1)=|\mathcal{R}|$ which is immediate after the partitioning \eqref{EQ: Reflection set partition into cyclis}.

For the last statement, the higher bound is achieved when each eigenvalue of each $t_H$ equals $1$; of course this happens only in the trivial representation. For the lower bound, we need all $\lambda\neq 1$, which happens for instance in the ($1$-dimensional) det representation. 
\end{proof}

The character values $\chi(\mathcal{R})$ on the sum of reflections are related to an statistic of the associated representation called the {\it Coxeter number} and denoted by $c_{\chi}$. We postpone to \S\ref{Section: Malle's character permutations and the technical lemma} the discussion about its origin and for now we only give the definition:

\begin{definition}\cite[\S1.3]{gordon-griffeth-catalan-numbers}\label{Defn: Coxeter numbers}
We define the Coxeter number $c_{\chi}$ associated to the character $\chi$, as the normalized trace of the central element $\sum_{t\in\mathcal{R}}(\mathbf{1}-t)$. That is, $$c_{\chi}:=\dfrac{1}{\chi(1)}\cdot\big(|\mathcal{R}|\chi(1)-\chi(\mathcal{R})\big)=|\mathcal{R}|-\dfrac{\chi(\mathcal{R})}{\chi(1)}.$$ After Prop.~\ref{Prop: bounds for chi(R)} the numbers $c_{\chi}$ are rational, but as we will see in Corol.~\ref{Corol: local cox num are integers andbounds} they are in fact integers.
\end{definition}

%For the reflection representation $\chi_V$ of $W$, the number $c_{\chi_V}$ equals $(|\mathcal{R}|+|\mathcal{A}|)/n$ and, as we saw in \S\ref{Section:Regular elements and roots of the full twist}, is called the {\it Coxeter number of $W$}. It is denoted by $h$, because when $W$ is well-generated it agrees with the order of the Coxeter element.

It is easy now to reinterpret formula \eqref{Eq: Frob formula no Cox} in terms of the Coxeter numbers $c_{\chi}$. We record the following as a corollary of Thm.~\ref{Thm: Frobenius lemma}:

\begin{corollary}
The exponential generating function $\op{FAC}_{W,g}(t)$ for arbitrary length reflection factorizations of an element $g\in W$ is given by:\begin{equation}
\op{FAC}_{W,g}(t)=\dfrac{e^{t|\mathcal{R}|}}{|W|}\sum_{\chi\in\widehat{W}}\chi(1)\cdot \chi(g^{-1})\cdot e^{-t\cdot c_{\chi}}.\label{EQ: Frobenius via Coxeter numbers}\end{equation}
\end{corollary}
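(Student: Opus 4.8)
The plan is to obtain \eqref{EQ: Frobenius via Coxeter numbers} directly from the finite-sum form \eqref{Eq: Frob formula no Cox} by a single substitution; every ingredient is already available, so no new argument is needed.

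First I would recall from Defn.~\ref{Defn: Coxeter numbers} that the Coxeter number is defined precisely so that $\frac{\chi(\mathcal{R})}{\chi(1)}=|\mathcal{R}|-c_{\chi}$. Substituting this identity into the exponent appearing in \eqref{Eq: Frob formula no Cox}, each character's exponential factors as
$$\op{exp}\Big[t\cdot\frac{\chi(\mathcal{R})}{\chi(1)}\Big]=\op{exp}\big[t(|\mathcal{R}|-c_{\chi})\big]=e^{t|\mathcal{R}|}\cdot e^{-t\,c_{\chi}}.$$
Since the factor $e^{t|\mathcal{R}|}$ is independent of $\chi$, I would then pull it outside the sum over $\widehat{W}$, which leaves exactly the right-hand side of \eqref{EQ: Frobenius via Coxeter numbers}.

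The hard part will be essentially nonexistent: this corollary is a purely formal rearrangement whose only content is notational, trading the ratio $\chi(\mathcal{R})/\chi(1)$ for the single invariant $c_{\chi}$. Its value lies downstream rather than in the derivation itself, since isolating the universal prefactor $e^{t|\mathcal{R}|}$ and recasting the per-character dependence through the Coxeter numbers $c_{\chi}$ is precisely the shape needed for the later cancellation of characters via Malle's permutation $\Psi$.
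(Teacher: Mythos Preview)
Your proposal is correct and matches the paper's approach exactly: the paper does not even write a proof, merely noting that it is ``easy now to reinterpret formula \eqref{Eq: Frob formula no Cox} in terms of the Coxeter numbers $c_{\chi}$'' and recording the result as a corollary. Your one-line substitution $\chi(\mathcal{R})/\chi(1)=|\mathcal{R}|-c_{\chi}$ followed by pulling out the common factor $e^{t|\mathcal{R}|}$ is precisely the intended (and only) content.
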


The following lemma is the main technical ingredient for the proof of Thm.~\ref{Thm: Main, not weighted}. Its derivation, which we postpone until Section~\ref{Section: Hecke algebras and the technical lemma} (see after Prop.~\ref{Prop: The key technical lemma}), relies on a cyclic action on the set $\op{Irr}(W)$ of irreducible representations of $W$ which is induced by a Galois action (see Defn.~\ref{Defn: Malle's character permutations}) on the modules of the Hecke algebra. Recall Defn.~\ref{Defn: regular element} for the concept of a regular element.

\begin{lemma}\label{Lemma: only multiples of |g| contribute}
For a complex reflection group $W$, and a \textbf{regular} element $g\in W$, the total contribution in \eqref{EQ: Frobenius via Coxeter numbers} of those characters $\chi\in\widehat{W}$ for which $c_{\chi}$ is not a multiple of $|g|$ is $0$.
\end{lemma}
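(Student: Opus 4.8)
The goal is to show that the summands in \eqref{EQ: Frobenius via Coxeter numbers} indexed by characters $\chi$ with $|g|\nmid c_\chi$ cancel among themselves. The natural tool is Malle's cyclic action $\Psi$ on $\op{Irr}(W)$, which the introduction promises permutes characters while controlling their Coxeter numbers. My plan is to extract from the setup two facts about $\Psi$ and then run an averaging/orbit-cancellation argument.

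First I would pin down how $\Psi$ interacts with the data appearing in the summand: the dimension $\chi(1)$, the value $\chi(g^{-1})$ on a regular element, and the Coxeter number $c_\chi$. Since $\Psi$ arises from a Galois action on Hecke-algebra modules (Defn.~\ref{Defn: Malle's character permutations}) and the key technical lemma (Prop.~\ref{Prop: The key technical lemma}) is advertised as describing exactly how $\Psi$ affects character values on regular elements, I expect the following package: (i) $\Psi$ preserves dimension, $\Psi(\chi)(1)=\chi(1)$; (ii) $\Psi$ shifts the Coxeter number by a fixed increment tied to the full twist, something of the form $c_{\Psi(\chi)}=c_\chi + |\mathcal{R}|+|\mathcal{A}| = c_\chi + nh$ modulo the relevant normalization, but more usefully, $\Psi$ multiplies $e^{-tc_\chi}$ by a root of unity once we evaluate at the regular element; and (iii) on a $\zeta$-regular element $g$ of order $d=|g|$, the value transforms by a root of unity, $\chi\bigl(\Psi(\chi)\bigr)(g^{-1}) = \omega\cdot \chi(g^{-1})$, where $\omega$ is a primitive $d$-th root of unity determined by the lift $\bm g$ with $\bm g^d=\bm\pi^m$ from Prop.~\ref{Prop: regular elements lift to roots of the full twist}. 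The precise root of unity should come from comparing the action of $\Psi$ against the central full twist $\bm\pi$, using that $g$ lifts to a $d$-th root of $\bm\pi^m$ and that $c_\chi$ records (up to the $\chi(1)$ normalization) how $\bm\pi$ acts in the representation.

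With such a package in hand, the argument is a standard weighted orbit sum. Group the index set $\op{Irr}(W)$ into $\Psi$-orbits. On each orbit, the quantities $\chi(1)$ stay constant, so the total contribution of an orbit factors as $\chi(1)e^{t|\mathcal{R}|}/|W|$ times a geometric-type sum $\sum_{j} \chi(\Psi^j\chi)(g^{-1})\, e^{-t c_{\Psi^j\chi}}$. Using the transformation rules, each application of $\Psi$ multiplies the summand by one and the same root of unity $\omega=\zeta^{?}$ whose order divides $d=|g|$; concretely the factor is a $d$-th root of unity that is trivial exactly when $d\mid c_\chi$. Hence if $|g|\nmid c_\chi$ the orbit sum is a full geometric sum of a nontrivial root of unity over a complete period (the orbit length being a multiple of the root's order), which vanishes; whereas if $|g|\mid c_\chi$ the factor is trivial and the terms reinforce rather than cancel. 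Summing the vanishing orbits gives exactly the claimed statement.

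The main obstacle is isolating the exact root of unity by which $\Psi$ twists $\chi(g^{-1})e^{-tc_\chi}$ and proving that its order is precisely the relevant divisor of $|g|$; everything else is bookkeeping over orbits. This is genuinely the content of Prop.~\ref{Prop: The key technical lemma}, and the delicate point will be that the two twists—the one on the regular-element character value and the one on the Coxeter-number exponent—must combine into a single clean factor $\zeta_{|g|}^{c_\chi}$ (or its inverse), so that the factor is $1$ if and only if $|g|\mid c_\chi$. I would therefore structure the proof so that Lemma~\ref{Lemma: only multiples of |g| contribute} is an essentially formal consequence once Prop.~\ref{Prop: The key technical lemma} supplies that the $\Psi$-orbit of a summand is a geometric progression with ratio $\zeta_{|g|}^{c_\chi}$, with the full-twist computation of Corol.~\ref{Cor: the abelianization of the full twist} and the root-of-twist property of Prop.~\ref{Prop: regular elements lift to roots of the full twist} pinning down that ratio.
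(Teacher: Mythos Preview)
Your overall strategy is exactly the paper's: partition $\widehat{W}$ into $\Psi$-orbits, observe that $\chi(1)$ and $c_\chi$ are constant along each orbit, and use Prop.~\ref{Prop: The key technical lemma} to see that the remaining factor $\chi(g^{-1})$ runs through a geometric progression with ratio a $|g|$-th root of unity that is nontrivial precisely when $|g|\nmid c_\chi$. The orbit sum then vanishes. That is the entire content of the paper's proof.

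One correction to your mechanism, though it does not affect the conclusion: there are not ``two twists'' that combine. By Prop.~\ref{Prop: properties of Psi for Cox numbers} the Coxeter number is \emph{invariant} under $\Psi$, so $e^{-tc_{\Psi(\chi)}}=e^{-tc_\chi}$ exactly; the term $e^{-tc_\chi}$ is a genuine real exponential in $t$ and is never multiplied by a root of unity. The entire root-of-unity factor $\exp\bigl(-2\pi i\,lc_\chi/d\bigr)$ comes solely from the transformation of the character value $\chi(g^{-1})$ in Prop.~\ref{Prop: The key technical lemma}. So your item (ii) should read ``$\Psi$ preserves $c_\chi$'' rather than ``shifts it,'' and the combination you anticipate is really just item (iii) alone. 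Your care about the orbit length being a \emph{multiple} of the order of that root (rather than equal to it) is in fact slightly more precise than the paper's phrasing; either way $\xi\neq 1$ and $\xi^k=1$ force the sum to vanish.
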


The following is an essentially immediate application of Lemma~\ref{Lemma: only multiples of |g| contribute}. We state it as a theorem as all explicit formulas that come after (\ref{Corol: first of theorem}-\ref{Corol: last of theorem}) are derived as its corollaries:

\begin{theorem}\label{Thm: Main, not weighted}
For a complex reflection group $W$, and a regular element $g\in W$, the exponential generating function $\op{FAC}_{W,g}(t)$ of reflection factorizations of $g$ takes the following form:$$
\op{FAC}_{W,g}(t)=\dfrac{e^{t|\mathcal{R}|}}{|W|}\cdot \Big[ (1-X)^{l_R(g)}\cdot\Phi(X)\Big]\Big|_{X=e^{-t|g|}}\ .$$ Here $l_R(g)$ is the reflection length of $g$ and $\Phi(X)$ is a polynomial in $X$ that has degree $\frac{|\mathcal{R}|+|\mathcal{A}|}{|g|}-l_R(g)$, is not further divisible by $(1-X)$, and has constant term equal to $1$.
\end{theorem}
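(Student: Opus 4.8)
The plan is to read everything off from the reduced form of \eqref{EQ: Frobenius via Coxeter numbers}. Writing $d:=|g|$ and substituting $X=e^{-td}$, Lemma~\ref{Lemma: only multiples of |g| contribute} tells us that only the characters with $d\mid c_\chi$ survive, so that
$$\op{FAC}_{W,g}(t)=\frac{e^{t|\mathcal{R}|}}{|W|}\cdot P(X)\Big|_{X=e^{-td}},\qquad P(X):=\sum_{\substack{\chi\in\widehat W\\ d\,\mid\, c_\chi}}\chi(1)\,\chi(g^{-1})\,X^{c_\chi/d}.$$
This $P$ is a genuine polynomial because the $c_\chi$ are non-negative integers (Defn.~\ref{Defn: Coxeter numbers} together with Corol.~\ref{Corol: local cox num are integers andbounds}). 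The whole of Thm.~\ref{Thm: Main, not weighted} then becomes four assertions about $P$: an upper bound on its degree, its constant term, its order of vanishing at $X=1$, and the non-vanishing of its top coefficient.

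First I would pin down the two easy ends of $P$. By Prop.~\ref{Prop: bounds for chi(R)} one has $0\le c_\chi\le |\mathcal{R}|+|\mathcal{A}|$, so $\deg P\le (|\mathcal{R}|+|\mathcal{A}|)/d$; this is an integer because $|\mathcal{R}|+|\mathcal{A}|=\sum_{\mathcal C}\omega_{\mathcal C}e_{\mathcal C}$ and Corol.~\ref{Corol: regular numbers divide e_co_c} gives $d\mid e_{\mathcal C}\omega_{\mathcal C}$ for every orbit $\mathcal C$. For the constant term, Prop.~\ref{Prop: bounds for chi(R)} says the lower bound $c_\chi=0$ is attained \emph{only} by the trivial character, whence $P(0)=\sum_{c_\chi=0}\chi(1)\chi(g^{-1})=1$; this is exactly the claim that $\Phi$ has constant term $1$.

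Next I would compute the order of vanishing of $P$ at $X=1$, which is where the reflection length enters. Since $X=e^{-td}$ is a local analytic isomorphism near $t=0$ carrying $t=0$ to $X=1$ with non-zero derivative, and since $e^{t|\mathcal{R}|}$ is a unit there, the order of vanishing of $P$ at $X=1$ equals the order of vanishing of $\op{FAC}_{W,g}(t)$ at $t=0$. But $\op{FAC}_{W,g}(t)=\sum_{l\ge0}\op{Fact}_{W,g}(l)\,t^l/l!$, and $\op{Fact}_{W,g}(l)$ vanishes for $l<l_R(g)$ while $\op{Fact}_{W,g}(l_R(g))\ge1$, by the very definition of reflection length. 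Hence the order of vanishing is exactly $l_R(g)$, so $P(X)=(1-X)^{l_R(g)}\,\Phi(X)$ with $\Phi(1)\neq0$ (so $(1-X)\nmid\Phi$) and $\deg\Phi=\deg P-l_R(g)\le (|\mathcal{R}|+|\mathcal{A}|)/d-l_R(g)$.

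The remaining and genuinely hard point is that this last inequality is an equality, i.e. that the top coefficient $a:=\sum_{\chi\in\mathcal S}\chi(1)\chi(g^{-1})$ is non-zero, where $\mathcal S=\{\chi:\ c_\chi=|\mathcal{R}|+|\mathcal{A}|\}$ consists, by the eigenvalue analysis in the proof of Prop.~\ref{Prop: bounds for chi(R)}, of the characters on which no reflection fixes a vector. For a real reflection group this is immediate: maximality of $c_\chi$ forces every reflection to act as $-\op{id}$ on $U_\chi$, which makes the image of $W$ abelian and hence $U_\chi$ one-dimensional, so $\mathcal S=\{\op{det}\}$ and $a=\op{det}(g^{-1})\neq0$. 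In the general complex case $\mathcal S$ can be large (already for $W=G(r,1,1)$ it is the set of all non-trivial characters), and I expect the clean route is to show that $P$ is palindromic up to a root of unity, $X^{\deg P}P(1/X)=\kappa\,P(X)$, whereupon $a=\kappa\,P(0)=\kappa\neq0$. Since the complementary exponent $c_\chi\mapsto(|\mathcal{R}|+|\mathcal{A}|)-c_\chi$ is precisely the normalized trace of the central element $|\mathcal{A}|\cdot\id+\sum_{t\in\mathcal R}t$, this symmetry says that the $\chi$-weighted coefficient sums for $c_\chi$ and for its complement agree up to $\kappa$. Proving it uniformly is the crux, and it is the one place where I would expect to need the Galois/$\Psi$-action developed in Section~\ref{Section: Hecke algebras and the technical lemma} rather than the elementary bounds used for the other three claims.
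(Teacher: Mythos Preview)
Your argument is essentially identical to the paper's. Both invoke Lemma~\ref{Lemma: only multiples of |g| contribute} to collapse \eqref{EQ: Frobenius via Coxeter numbers} to a polynomial in $X=e^{-t|g|}$, use Prop.~\ref{Prop: bounds for chi(R)} for the degree bound and the constant term, and then exploit the combinatorial meaning of $l_R(g)$ to extract the factor $(1-X)^{l_R(g)}$. The paper phrases this last step as a root factorization $\tilde\Phi(X)=a\prod(\alpha_i-X)$ and counts how many $\alpha_i$ equal $1$; your order-of-vanishing argument via the local diffeomorphism $t\mapsto X$ is the same computation stated more cleanly.

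Where you are \emph{more} careful than the paper is on the exact degree. You rightly observe that Prop.~\ref{Prop: bounds for chi(R)} yields only $\deg P\le(|\mathcal{R}|+|\mathcal{A}|)/d$, and that equality requires the non-vanishing of $\sum_{c_\chi=|\mathcal{R}|+|\mathcal{A}|}\chi(1)\chi(g^{-1})$, which is not obvious outside the real case. The paper's proof does not settle this either: it writes that $\tilde\Phi$ ``is a priori a polynomial \ldots\ of degree $(|\mathcal{R}|+|\mathcal{A}|)/|g|$'' --- the qualifier signalling an upper bound --- and then records the degree of $\Phi$ without further argument. In practice none of the corollaries (\ref{Corol: first of theorem}--\ref{Corol: last of theorem}) need more than $\deg\Phi\le(|\mathcal{R}|+|\mathcal{A}|)/|g|-l_R(g)$; for instance the proof of Corol.~\ref{Cor: Chapuy-Stump for dn} shows this bound is $\le 0$ and concludes $\Phi=1$. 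So the ``genuinely hard point'' you flag is real, but the paper does not resolve it and does not need to; your first three steps already reproduce everything the paper actually proves and uses. The palindromic-symmetry route you sketch does not appear in the paper.
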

\begin{proof}
After Lemma~\ref{Lemma: only multiples of |g| contribute} we only need to consider terms of the form $\chi(1)\cdot\chi(g^{-1})\cdot e^{-t\cdot k|g|},\,k\in\ZZ$ in the evaluation of \eqref{EQ: Frobenius via Coxeter numbers}. Furthermore, rephrasing Prop.~\ref{Prop: bounds for chi(R)} in terms of the Coxeter numbers (via Defn.~\ref{Defn: Coxeter numbers}) forces $k\in \{0,\dots,\frac{|\mathcal{R}|+|\mathcal{A}|}{|g|}\}$. This means that if we set $X=e^{-t|g|}$, we can rewrite \eqref{EQ: Frobenius via Coxeter numbers} as $$\op{FAC}_{W,g}(t)=\dfrac{e^{t|\mathcal{R}|}}{|W|}\cdot \tilde{\Phi}(X),$$ where $\tilde{\Phi}(X)$ is a priori a polynomial in $\CC[X]$ of degree $(|\mathcal{R}|+|\mathcal{A}|)/|g|$. The last statement of Lemma~\ref{Lemma: only multiples of |g| contribute} implies also that the constant term of $\tilde{\Phi}(X)$ is equal to $\chi_{\op{triv}}(1)\cdot\chi_{\op{triv}}(g^{-1})=1$.

%To see how $\tilde{\Phi}(X)$ actually has integer coefficients notice that, if we write $\tilde{\Phi}(X)=\sum_k a_k\cdot X^k$, we have after \eqref{EQ: Frobenius via Coxeter numbers} that $$a_k=\sum_{\substack{\chi\in\widehat{W}\\c_{\chi}=k|g|}}\chi(1)\chi(g^{-1}).$$ The numbers $a_k$ are therefore algebraic integers and we only need to show that $a_k\in\QQ$. Now, the coefficient of $t^r/r!$ in $\op{FAC}_{W,g}(t)$ is by definition a natural number and can be written as:$$\Bigg[\dfrac{t^r}{r!}\Bigg]\op{FAC}_{W,g}(t)=\dfrac{1}{|W|}\sum_k a_k\big( |\mathcal{R}|-k|g|\big)^r.$$
%The right hand side of this expression has to be an integer for all $r\geq 0$. 

Now, since $\tilde{\Phi}(X)$ essentially encodes the generating function $\op{FAC}_{W,g}(t)$, the combinatorial properties of the latter impose restrictions on its structure. In particular, consider the root factorization of the polynomial:$$
\tilde{\Phi}(X)=a(\alpha_1-X)(\alpha_2-X)\cdots (\alpha_r-X).$$

If we revert to $X=e^{-t|g|}$, each of the linear terms above has a Taylor expansion that starts with $(\alpha_i-1)+t|g|+\cdots$. This means that it contributes to the leading term of $\op{FAC}_{W,g}(t)$ either by a factor of $(\alpha_i-1)$ or by a factor of $t|g|$, depending on whether $\alpha_i$ equals $1$ or not.

On the other hand, the combinatorial definition of $\op{FAC}_{W,g}(t)$ in \eqref{Eq: Frob formula no Cox} implies that its leading term is a multiple of $t^{l_R(g)}$. Therefore, exactly $l_{R}(g)$-many of the roots of $\tilde{\Phi}$ must be equal to $1$ and this completes the proof. The statements about the degree and the constant term follow from the analogous results for $\tilde{\Phi}$ described previously. 
\end{proof}

\begin{remark}
In the previous argument, the existence of a reflection length and therefore the knowledge that the first few terms of the generating function $\op{FAC}_{W,g}(t)$ are zero, came for free but was very useful nonetheless. This sort of reasoning has appeared already in \cite[end of proof of Thm.~2]{fpsac-paper-refl-length-trick}. It is hoped that similar ideas might apply to other groups with natural length functions, such as $\op{GL}_n(\mathbb{F}_q)$ (see \cites{reiner-lewis-stanton-singer-cycles}{lewis-morales-gln-fq-factorizations}). Moreover, one might construct special length functions to support different enumerative questions (as we pursue in Prop.~\ref{Prop: transitive factorizations enum.} and in Defn.~\ref{Defn: special reflection lengths n_C}).  
\end{remark}

\begin{corollary}\label{Corol: first of theorem}
For a complex reflection group $W$, and a regular element $g\in W$, the number of reduced reflection factorizations of $g$ is an integer multiple of the quantity $ \dfrac{|g|^{l_R(g)}(l_R(g))!}{|W|}.$
\end{corollary}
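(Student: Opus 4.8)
The plan is to read off the number of reduced factorizations as the leading coefficient of $\op{FAC}_{W,g}(t)$ and then to show that the resulting proportionality constant is a rational integer. Since the reflection length $l_R(g)$ is the minimal length of a reflection factorization of $g$, the series $\op{FAC}_{W,g}(t)$ vanishes to order $l_R(g)$, and the number of reduced factorizations equals $l_R(g)!$ times the coefficient of $t^{l_R(g)}$. Starting from the shape supplied by Theorem~\ref{Thm: Main, not weighted}, I would substitute $X=e^{-t|g|}$ and expand near $t=0$: the factor $(1-X)^{l_R(g)}=(1-e^{-t|g|})^{l_R(g)}$ contributes $|g|^{l_R(g)}t^{l_R(g)}\big(1+O(t)\big)$, while $e^{t|\mathcal{R}|}$ and $\Phi(X)$ contribute their values at $t=0$, namely $1$ and $\Phi(1)$ (the latter being nonzero since $(1-X)\nmid\Phi$). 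Collecting terms gives
$$\op{Fact}_{W,g}(l_R(g)) = \Phi(1)\cdot\frac{|g|^{l_R(g)}(l_R(g))!}{|W|},$$
so the corollary is equivalent to the assertion that $\Phi(1)\in\ZZ$.

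Establishing $\Phi(1)\in\ZZ$ is the heart of the matter, and I would do it by proving the stronger statement that $\Phi(X)\in\ZZ[X]$. Writing $\tilde\Phi(X)=(1-X)^{l_R(g)}\Phi(X)$ for the full polynomial obtained from \eqref{EQ: Frobenius via Coxeter numbers} (as in the proof of Theorem~\ref{Thm: Main, not weighted}), its coefficients are the partial character sums
$$ a_k = \sum_{\chi:\ c_\chi = k|g|}\chi(1)\,\chi(g^{-1}), $$
obtained by grouping $\widehat{W}$ according to the Coxeter number of Definition~\ref{Defn: Coxeter numbers}. Each $a_k$ is manifestly an algebraic integer, being a $\ZZ$-combination of the algebraic integers $\chi(g^{-1})$, so it suffices to show that every $a_k$ is rational, whence $a_k\in\ZZ$ and $\tilde\Phi\in\ZZ[X]$; dividing by the monic-up-to-sign polynomial $(1-X)^{l_R(g)}$ then keeps us in $\ZZ[X]$ and yields $\Phi(1)\in\ZZ$.

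The key observation — and the step I expect to carry the real content — is that the level sets $\{\chi:c_\chi=m\}$ are stable under the Galois action on $\op{Irr}(W)$. Indeed, for $\sigma$ in the relevant Galois group of character values and $\chi^\sigma:=\sigma\circ\chi$, one has $\chi^\sigma(1)=\chi(1)$ and $\chi^\sigma(\mathcal{R})=\sigma\big(\chi(\mathcal{R})\big)$; but $\chi(\mathcal{R})$ is an \emph{integer} by Proposition~\ref{Prop: bounds for chi(R)}, so $\chi^\sigma(\mathcal{R})=\chi(\mathcal{R})$ and hence $c_{\chi^\sigma}=c_\chi$. Thus $\sigma$ permutes each level set, and applying $\sigma$ to $a_k$ — reindexing by $\chi^\sigma$ and using $\chi^\sigma(1)=\chi(1)$ to match up the dimension factors — returns $a_k$ itself, forcing $a_k\in\QQ$. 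The only mild subtlety is this Galois reindexing, ensuring the $\chi(1)$ factors line up correctly; everything else is bookkeeping, and it is precisely the integrality of $\chi(\mathcal{R})$ from Proposition~\ref{Prop: bounds for chi(R)} that makes the Coxeter number a Galois invariant and drives the whole reduction.
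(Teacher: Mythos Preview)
Your argument is correct and reaches the same destination as the paper, but the rationality step is genuinely different. Both you and the paper identify the number of reduced factorizations as $\Phi(1)\cdot |g|^{l_R(g)}(l_R(g))!/|W|$ and observe that the coefficients of $\tilde\Phi$ (hence of $\Phi$) are algebraic integers, so it remains to see that $\Phi(1)\in\QQ$. The paper does this in one line: the number of reduced factorizations is a nonnegative integer, and $|g|^{l_R(g)}(l_R(g))!/|W|$ is rational, so $\Phi(1)$ is rational. You instead prove the stronger statement $\tilde\Phi\in\ZZ[X]$ (and hence $\Phi\in\ZZ[X]$) by showing each coefficient $a_k$ is Galois-stable, using that $c_\chi$ is a Galois invariant of $\chi$ because $\chi(\mathcal R)\in\ZZ$ (Proposition~\ref{Prop: bounds for chi(R)}). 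Your route is a bit more work but yields more: integrality of all coefficients of $\Phi$, not just of $\Phi(1)$. The paper's route is slicker for the bare corollary, exploiting the enumerative meaning of the leading term rather than any structural Galois argument.
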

\begin{proof}
The leading coefficient of $\op{FAC}_{W,g}(t)$ is given, after Thm.~\ref{Thm: Main, not weighted}, by $$\Phi(1)\cdot\dfrac{|g|^{l_R(g)}(l_R(g))!}{|W|}.$$ It suffices then, to show that $\Phi(1)$ is an integer. By definition, the coefficients of the polynomial $\tilde{\Phi}(X)$ are algebraic integers and so the same is true for $\Phi(X)$. The quantity $\Phi(1)$ is thus an algebraic integer, and since it also has to be a rational number (because an integer multiple of it enumerates factorizations), it must be an integer.
\end{proof}

\begin{corollary}\label{Cor: Chapuy-Stump for dn}
For a complex reflection group $W$ and a regular element $g\in W$ of order $|g|=d_n$, the exponential generating function for reflection factorizations of $g$ is given by:$$
\op{FAC}_{W,g}(t)=\dfrac{e^{t|\mathcal{R}|}}{|W|}\cdot\big(1-e^{-t|g|}\big)^{l_R(g)}.$$ 
\end{corollary}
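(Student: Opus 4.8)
The plan is to deduce the corollary directly from Theorem~\ref{Thm: Main, not weighted} by showing that the correction polynomial $\Phi$ appearing there is identically $1$. Since that theorem already tells us $\Phi$ has constant term $1$ and is not divisible by $(1-X)$, it is enough to prove $\deg\Phi=0$; and again by Theorem~\ref{Thm: Main, not weighted} this degree equals $\tfrac{|\mathcal{R}|+|\mathcal{A}|}{|g|}-l_R(g)$. So the entire statement collapses to the single numerical identity $l_R(g)=\tfrac{|\mathcal{R}|+|\mathcal{A}|}{d_n}$.

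I would first recast the target number braid-theoretically. The partition \eqref{EQ: Reflection set partition into cyclis} gives $|\mathcal{R}|+|\mathcal{A}|=\sum_{\mathcal{C}}\omega_{\mathcal{C}}e_{\mathcal{C}}$, and for the $e^{2\pi i/d_n}$-regular element, with its lift $\bm g$ from Proposition~\ref{Prop: regular elements lift to roots of the full twist} satisfying $\bm g^{d_n}=\bm\pi$, the relation $l_{\mathcal{C}}(\bm g)\cdot d_n=e_{\mathcal{C}}\omega_{\mathcal{C}}$ recorded before Corollary~\ref{Corol: regular numbers divide e_co_c} yields $\tfrac{|\mathcal{R}|+|\mathcal{A}|}{d_n}=\sum_{\mathcal{C}}l_{\mathcal{C}}(\bm g)$, the total abelianized length of $\bm g$. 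One inequality is then free: Theorem~\ref{Thm: Main, not weighted} asserts that $\Phi$ is an honest polynomial, i.e. $\deg\Phi\ge0$, which is exactly $l_R(g)\le\tfrac{|\mathcal{R}|+|\mathcal{A}|}{d_n}$.

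The content is therefore the reverse inequality $l_R(g)\ge\tfrac{|\mathcal{R}|+|\mathcal{A}|}{d_n}$, and here the hypothesis $|g|=d_n$ is indispensable: for a regular element of smaller order the bound is simply false (an order-$2$ regular reflection in $S_3$ has $l_R=1$, while $\tfrac{|\mathcal{R}|+|\mathcal{A}|}{2}=3$). When $W$ is well generated the elementary estimate $l_R(g)\ge\op{codim}V^g$ already does the job: by Springer's description of the spectrum the eigenvalues of $g$ are the $\zeta^{1-d_i}$, none equal to $1$ since $d_n\nmid(d_i-1)$, so $V^g=0$ and $\op{codim}V^g=n$; and for well-generated groups the Coxeter number coincides with the top degree, $h=d_n$, whence $\tfrac{|\mathcal{R}|+|\mathcal{A}|}{d_n}=\tfrac{nh}{d_n}=n\le l_R(g)$, forcing equality.

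What resists this are the non-well-generated $W$ for which $d_n$ is nonetheless regular; by Proposition~\ref{Prop: When d_n is regular} these are the family $G(2r,2,2)$ together with a short list of exceptional groups. For them $\tfrac{|\mathcal{R}|+|\mathcal{A}|}{d_n}$ can strictly exceed $n$ (for $G(2r,2,2)$ it equals $3$ while $n=2$), so the codimension bound is too weak and a genuine braid-group input is needed. The uniform route is to exploit that, because $|g|=d_n$, the element $\bm g$ is a root of the full twist itself rather than of a proper power $\bm\pi^{m}$; by Bessis's Garside/dual-monoid theory such a root behaves as a generalized Coxeter element, and a reduced reflection factorization of $g$ lifts to a factorization of $\bm g$ into exactly $\sum_{\mathcal{C}}l_{\mathcal{C}}(\bm g)$ braid reflections, pinning $l_R(g)$ to that value. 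I expect this reverse inequality in the non-well-generated cases to be the main obstacle: it is precisely what separates $d_n$ from smaller regular numbers, it is invisible to the fixed-space dimension, and making it uniform rather than inspecting the finitely many remaining groups draws on the full braid-group description of the roots of $\bm\pi$.
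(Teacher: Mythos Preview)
Your reduction to the numerical identity $l_R(g)=\tfrac{|\mathcal{R}|+|\mathcal{A}|}{d_n}$ is exactly how the paper starts, and your treatment of the well-generated case via $l_R(g)\ge\op{codim}V^g=n$ is fine. The gap is precisely where you locate it: for the non-well-generated groups of Proposition~\ref{Prop: When d_n is regular} you only sketch an appeal to ``Bessis's Garside/dual-monoid theory'' and explicitly flag the reverse inequality as the main obstacle. That is not a proof, and the lifting statement you invoke (that a reduced reflection factorization of $g$ lifts to a factorization of $\bm g$ into $\sum_{\mathcal{C}}l_{\mathcal{C}}(\bm g)$ braid reflections) is itself a nontrivial assertion requiring the full machinery of \cite{bes-kpi1}, which the paper is trying to \emph{avoid} relying on in a case-by-case way.

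The paper's argument for this step is both simpler and uniform, and you are missing it entirely. Bessis has shown \cite[Prop.~4.2]{bes-zariski} that whenever $d_n$ is regular, the quantity $(|\mathcal{R}|+|\mathcal{A}|)/d_n$ equals the minimum number of reflections needed to generate $W$ (this is $n$ or $n+1$). Hence if $l_R(g)<(|\mathcal{R}|+|\mathcal{A}|)/d_n$, any reduced reflection factorization of $g$ generates a \emph{proper} reflection subgroup $W'\lneq W$, and $g$ is still regular in $W'$. Springer's theorem then lists the eigenvalues of $g$ twice, once via the degrees $d_i$ of $W$ and once via the degrees $d_i'$ of $W'$; the two multisets of residues $\{d_i\bmod d_n\}$ and $\{d_i'\bmod d_n\}$ must agree, which is impossible since $0<d_i\le d_n$ while $\prod d_i=|W|>|W'|=\prod d_i'$. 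This contradiction forces $\deg\Phi=0$ in all cases at once, with no split into well-generated versus not.
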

\begin{proof}
After Thm.~\ref{Thm: Main, not weighted} it is sufficient to show that for such an element $g$, the polynomial $\Phi(X)$ is equal to the scalar $1$, or equivalently that its degree is $0$ (notice that then, $\Phi(X)$ cannot be any other scalar since, again by Thm.~\ref{Thm: Main, not weighted}, its constant term is always $1$). 

The degree of $\Phi(X)$ is also given in the theorem; it equals $\frac{|\mathcal{R}|+|\mathcal{A}|}{|g|}-l_R(g)$. Now, Bessis has shown \cite[Prop.~4.2]{bes-zariski} that when $d_n$ is a regular number, the quantity $(|\mathcal{R}|+|\mathcal{A}|)/d_n$ is equal to the minimum number of reflections needed to generate $W$ (either $n$ or $n+1$). Therefore, if the degree of $\Phi(X)$ is not $0$, the $d_n$-regular element $g$ must  live in a reflection subgroup $W'$ of $W$.

If this were indeed the case, $g$ would still be regular in $W'$ and Springer's theorem \cite[\S32-2]{kane-book-reflection-groups} would allow us to list its eigenvalues in two ways:$$
\{\zeta^{1-d_1},\cdots,\zeta^{1-d_n}\}=\{\zeta^{1-d_1'},\cdots,\zeta^{1-d_n'}\},$$ where the $d_i'$ are the invariant degrees of $W'$ and $\zeta$ is a primitive $d_n$-th root of unity. This would force the two (multi-)sets of residues $\{d_i \op{mod}(d_n)\}$ and $\{d_i'\op{mod}(d_n)\}$ to be equal, but since $0\leq d_i\leq d_n$ and $\prod_{i=1}^n d_i=|W|>|W'|=\prod_{i=1}^n d_i'$, this is impossible.
\end{proof}

\begin{remark}
When $W$ is a well-generated group and $c$ a Coxeter element of $W$, we always have $|c|=d_n$. The previous corollary therefore completes a proof of the Chapuy-Stump formula \eqref{EQ: Chapuy-Stump formula} and extends it to the groups listed in Prop.~\ref{Prop: When d_n is regular}.
\end{remark}

In Thm.~\ref{Thm: Main, not weighted} the knowledge of the reflection length of an element provides structural information for a factorization enumeration formula. Here, we show an example where we can push this slightly further by considering a different length function, namely the transitive factorization length:

\begin{proposition}\label{Prop: transitive factorizations enum.}
The exponential generating function for transitive reflection factorizations of the regular element $g=(12\cdots n-1)(n)\in S_n$ is given by $$\op{TR-FAC}_{S_n,g}(t)=\dfrac{e^{t\binom{n}{2}}}{n!}\cdot \big( 1-e^{-t(n-1)}\big)^{n}. $$
\end{proposition}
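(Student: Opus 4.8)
The plan is to reduce the transitive enumeration to two \emph{ordinary} factorization problems and then feed each into the structural results already established. The engine is a simple combinatorial observation: a transposition factorization $t_1\cdots t_l=g$ of $g=(12\cdots n-1)(n)$ generates a transitive subgroup of $S_n$ precisely when at least one factor $t_i$ moves the point $n$. Indeed, since $g\in\langle t_1,\dots,t_l\rangle$ acts as a single $(n-1)$-cycle on $\{1,\dots,n-1\}$, those points always share one orbit; the group is transitive exactly when $n$ is attached to them, which happens iff some generator moves $n$. The factorizations in which \emph{no} factor moves $n$ are exactly the factorizations of the restriction $(12\cdots n-1)$ inside $S_{n-1}$, that is, the reflection factorizations of the Coxeter element $c'$ of $S_{n-1}$. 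Subtracting term by term in the length $l$ and passing to exponential generating functions yields
$$\op{TR-FAC}_{S_n,g}(t)=\op{FAC}_{S_n,g}(t)-\op{FAC}_{S_{n-1},c'}(t).$$

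I would then evaluate the two pieces. The element $g$ is regular of order $n-1$ in $S_n$: for $\zeta=e^{2\pi i/(n-1)}$ the vector $(\zeta,\zeta^2,\dots,\zeta^{n-1},0)$ is an eigenvector of $g$ whose eigenvalue is a primitive $(n-1)$-th root of unity and whose coordinates are pairwise distinct, hence it is regular. Its reflection length is $l_R(g)=n-2$, and since $|\mathcal R|=|\mathcal A|=\binom n2$ one has $(|\mathcal R|+|\mathcal A|)/|g|=n$; Theorem~\ref{Thm: Main, not weighted} therefore gives $\op{FAC}_{S_n,g}(t)=\tfrac{1}{n!}e^{t\binom n2}(1-X)^{n-2}\Phi(X)$ with $X=e^{-t(n-1)}$, where $\Phi$ has degree $2$ and constant term $1$. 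On the other hand $c'$ has order $n-1$, the largest degree of $S_{n-1}$, so Corollary~\ref{Cor: Chapuy-Stump for dn} applies and gives $\op{FAC}_{S_{n-1},c'}(t)=\tfrac{1}{(n-1)!}e^{t\binom{n-1}2}(1-X)^{n-2}$. Using $\binom n2-\binom{n-1}2=n-1$, so that $e^{t\binom{n-1}2}=X\,e^{t\binom n2}$ and $1/(n-1)!=n/n!$, the subtraction collapses to
$$\op{TR-FAC}_{S_n,g}(t)=\frac{e^{t\binom n2}}{n!}\,(1-X)^{n-2}\big[\Phi(X)-nX\big].$$

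It remains to identify the degree-$2$ polynomial $\Phi(X)-nX$, whose constant term is $1$. Here I would use the \emph{transitive factorization length} as the ``special length function'' promised in the preceding remark: every transitive factorization of $g$ uses at least $n$ transpositions. Indeed the factors span a connected graph on $n$ vertices, so there are at least $n-1$ of them, and exactly $n-1$ would form a spanning tree, whose transpositions multiply to a single $n$-cycle --- impossible for $g$, of cycle type $(n-1,1)$. Thus $\op{TR-FAC}_{S_n,g}(t)$ vanishes to order at least $n$ in $t$, which forces $\Phi(X)-nX$ to vanish to order $2$ at $X=1$; a degree-$2$ polynomial with constant term $1$ and a double root at $1$ can only be $(1-X)^2$, and substituting gives the asserted formula. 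The main obstacle is precisely this last determination of $\Phi$: because $|g|=n-1$ is \emph{not} the top degree $n$ of $S_n$, Theorem~\ref{Thm: Main, not weighted} leaves a genuine degree-$2$ ambiguity that the Chapuy--Stump-type Corollary~\ref{Cor: Chapuy-Stump for dn} cannot remove. The delicate point is that the subtraction is arranged so that the a~priori global, hard-to-control transitive length controls exactly this ambiguity; making that rigorous rests on the classical fact that the transpositions of a spanning tree compose to an $n$-cycle, which pins the minimal transitive length at $n$.
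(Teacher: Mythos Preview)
Your proof is correct and follows essentially the same route as the paper: the same subtraction $\op{FAC}_{S_n,g}-\op{FAC}_{S_{n-1},c'}$, the same application of Theorem~\ref{Thm: Main, not weighted} and Corollary~\ref{Cor: Chapuy-Stump for dn}, and the same identification of $\Phi(X)-nX$ via the minimal transitive length. The only difference is in ruling out length $n-1$: the paper uses the parity mismatch $(-1)^{n-1}\neq(-1)^{n-2}$, whereas you invoke the classical fact that a tree's transpositions multiply to an $n$-cycle; both work, though the parity argument is lighter.
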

\begin{proof}
Since $S_{n-1}$ is the only reflection subgroup of $S_n$ that contains the element $g$, we can enumerate the transitive reflection factorizations of the latter by subtracting from all possible factorizations, those that live in $S_{n-1}$:$$
\op{TR-FAC}_{S_n,g}(t)=\op{FAC}_{S_n,g}(t)-\op{FAC}_{S_{n-1},g}(t).$$ If we apply Thm.~\ref{Thm: Main, not weighted} and Corol.~\ref{Cor: Chapuy-Stump for dn} to the two terms above, we get, for $X=e^{-t(n-1)}$ :
\begin{align*}
\op{FAC}_{S_n,g}(t)-\op{FAC}_{S_{n-1},g}(t)\ &= \ \dfrac{e^{t\binom{n}{2}}}{n!}\cdot \big(1-e^{-t(n-1)}\big)^{n-2}\cdot\Phi(X) \ - \ \dfrac{e^{t\binom{n-1}{2}}}{(n-1)!}\cdot \big( 1-e^{-t(n-1)}\big)^{n-2}  \\
&= \dfrac{e^{t\binom{n}{2}}}{n!}\cdot\big(1-e^{-t(n-1)}\big)^{n-2}\cdot\big( \Phi(X)-nX\big),
\end{align*}where $\Phi(X)$ has degree $2=\frac{2\binom{n}{2}}{n-1}-(n-2)$ and constant term equal to $1$.

Notice now that the leading term of the generating function $\op{TR-FAC}_{S_n,g}(t)$ needs to be a multiple of $t^n$. Indeed, $n$ is a lower bound for the length of transitive reflection factorizations of $g$, since at least $n-1$ reflections are needed to generate $S_n$, but since also $g$ cannot be written as a product of $n-1$ reflections as it has parity $(-1)^{n-2}$.

Of course, $\big( 1-e^{-t(n-1)}\big)^{n-2}$ contributes a factor of $t^{n-2}$ to the leading term of the generating function, so $\big(\Phi(X)-nX\big)$ must contribute a multiple of $t^2$. As in the proof of Thm.~\ref{Thm: Main, not weighted}, and because $\op{deg}(\Phi(X))=2$, this implies that $$\Phi(X)-nX=(1-X)^2,$$ which completes the argument.
\end{proof}

\begin{corollary}\label{Corol: last of theorem}
For the symmetric group $S_n$ and the regular element $g=(12\cdots n-1)(n)\in S_n$, the polynomial $\Phi(X)$ from Thm.~\ref{Thm: Main, not weighted} is given by:$$
\Phi(X)=1+(n-2)X+X^2.$$
\end{corollary}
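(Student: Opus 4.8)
The plan is to read off the coefficients of $\Phi(X)$ directly from the computation already carried out in the proof of Proposition~\ref{Prop: transitive factorizations enum.}. That proof established the identity $\Phi(X)-nX=(1-X)^2$, where $\Phi(X)$ is precisely the polynomial attached to $g=(12\cdots n-1)(n)$ by Thm.~\ref{Thm: Main, not weighted}. So the entire corollary is an exercise in solving this single equation for $\Phi(X)$.

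First I would recall, from the statement of Proposition~\ref{Prop: transitive factorizations enum.}, that $\Phi(X)$ has degree $2$ and constant term $1$, so it is consistent to expect a closed-form quadratic. Then I would simply expand the right-hand side and isolate $\Phi$:
\begin{equation*}
\Phi(X)=(1-X)^2+nX=1-2X+X^2+nX=1+(n-2)X+X^2.
\end{equation*}
This already gives the claimed formula, so there is essentially nothing further to prove.

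As a sanity check I would verify that the three structural constraints from Thm.~\ref{Thm: Main, not weighted} hold: the constant term is $1$ (it is), the degree is $2$ as predicted by $\frac{|\mathcal{R}|+|\mathcal{A}|}{|g|}-l_R(g)=\frac{2\binom{n}{2}}{n-1}-(n-2)=2$ (it is), and $\Phi(X)$ is not divisible by $(1-X)$, which one checks by evaluating $\Phi(1)=1+(n-2)+1=n\neq 0$. This last check is the only place where one might momentarily pause, but it is immediate.

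Since the real content is the identity $\Phi(X)-nX=(1-X)^2$ already derived in the preceding proposition, the only genuine obstacle would have been establishing that identity in the first place---and that has been done. Consequently I do not anticipate any difficulty here; the corollary is a direct algebraic consequence, and the proof amounts to the one-line rearrangement above.
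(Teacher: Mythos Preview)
Your proposal is correct and matches the paper's approach: the corollary is stated without a separate proof, as it follows immediately from the identity $\Phi(X)-nX=(1-X)^2$ established at the end of the proof of Proposition~\ref{Prop: transitive factorizations enum.}. Your one-line rearrangement and the subsequent sanity checks are exactly what is intended.
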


\begin{remark}
It is not clear whether one should expect a nice formula for the polynomials $\Phi_g(X)$. They don't seem to factor in small order terms and their coefficients, although integers, are not always positive (an example being the regular class of order $3$ in $E_6$). It might be however that a better answer exists for the infinite family $G(r,p,n)$ (or even just the symmetric group $S_n$), where the regular elements have simple cycle types.
\end{remark}

\begin{question}
For Weyl groups $W$, one can easily see \cite[Prop.~4.10]{springer-regular-elements} that any regular element of order $d$ divides the set of roots in orbits of size $d$.   Perhaps this could be used in a fashion similar to the recursion in \cites{deligne-letter-to-looijenga}{reading-chains-in-noncrossing} and, possibly assuming the Lemma of Frobenius \eqref{EQ: Frobenius via Coxeter numbers}, give a combinatorial proof of our technical Lemma~\ref{Lemma: only multiples of |g| contribute}.
\end{question}

\section{Hecke algebras and the technical lemma}
\label{Section: Hecke algebras and the technical lemma}

Iwahori-Hecke algebras associated to Weyl groups $W$ appear naturally as endomorphism algebras of certain induced modules in the representation theory of finite groups of Lie type. They can also be seen as deformations of the corresponding group ring $\ZZ[W]$. This second interpretation has been extended for all complex reflection groups:

Let $\mathcal{C}\in\mathcal{A}/W$ denote an orbit of hyperplanes, and $e_{\mathcal{C}}$ the common order of the pointwise stabilizers $W_H$ (for $H\in\mathcal{C}$). Consider now a set of $\sum_{\mathcal{C}\in\mathcal{A}/W}e_{\mathcal{C}}$ many variables $\bm u:=(u_{\mathcal{C},j})_{(\mathcal{C}\in\mathcal{A}/W),(0\leq j\leq e_{\mathcal{C}}-1)}$ and write $\ZZ[\bm u,\bm u^{-1}]$ for the Laurent polynomial ring on the $u_{\mathcal{C},j}$'s. 

\begin{definition}\cite[Defn.~4.21]{broue-malle-rouquier-braid-hecke}\label{Defn: Hecke algebra}
The {\it generic Hecke algebra} $\mathcal{H}(W)$ associated to $W$ is the quotient of the group ring $\ZZ[\bm u,\bm u^{-1}]B(W)$ of the braid group, over the ideal generated by the elements of the form \begin{equation}
(\bm s-u_{\mathcal{C},0})(\bm s-u_{\mathcal{C},1})\cdots (\bm s-u_{\mathcal{C},e_{\mathcal{C}}-1}),\label{EQ: deformed order relations}\end{equation}
which we call {\it deformed order relations} (see \eqref{EQ: specialization of the u's}). Here $\bm s$ runs over all possible braid reflections (see \S\ref{Section: Braid groups and generators of the monodromy}) around the stratum $\mathcal{C}$ of $\mathcal{H}$. Notice that for each orbit $\mathcal{C}$ one such relation is in fact sufficient since all corresponding elements $\bm s_{\mathcal{C},\gamma}$ are conjugate in $B(W)$. 
\end{definition}

\begin{notation}
For an element $\bm g$ of the braid group $B(W)$, we denote the corresponding element in the Hecke algebra by $T_{\bm g}$.
\end{notation}

Any ring map $\theta:\ZZ[\bm u,\bm u^{-1}]\rightarrow R$ defines an $R$-module structure on the Hecke algebra. We write $\mathcal{H}_R(W):=\mathcal{H}(W)\otimes_{\ZZ[\bm u,\bm u^{-1}]}R$ and call $\mathcal{H}_R(W)$  a {\it \color{blue} specialization} of $\mathcal{H}(W)$. The map $\theta$ induces thus a canonical map $\tilde{\theta}: \mathcal{H}(W)\rightarrow \mathcal{H}_R(W)$ via $T_{\bm g}\rightarrow T_{\bm g}\otimes 1$. 

The Hecke algebra is by construction a deformation of the group algebra of $W$.  Indeed, the specialization (recall $\zeta_n:=\op{exp}(2\pi i /n)$) \begin{equation}
u_{\mathcal{C},j}\xrightarrow{\displaystyle \ \sigma \ } \zeta_{e_{\mathcal{C}}}^j \label{EQ: specialization of the u's}\end{equation} transforms the defining relations \eqref{EQ: deformed order relations} to order relations of the form $\bm s^{e_{\mathcal{C}}}=1$. Then, by Prop.~\ref{Prop: (braid reflections)^e_c generate P(W)} $\mathcal{H}(W)$ reduces to the group ring $\ZZ[(\zeta_{e_{\mathcal{C}}})]_{(\mathcal{C}\in\mathcal{A}/W)}[W]$ and the map $\tilde{\sigma}$ agrees with the fixed (see Rem.~\ref{Remark: We can disregard the basepoint}) surjection $B(W)\twoheadrightarrow W$. That is, if $g\in W$ is the image of $\bm g\in B(W)$ under \eqref{1PBW1}, then $$\tilde{\sigma}(T_{\bm g})=g.$$

\begin{definition}\label{Defn: admissible specializations} A specialization $\theta$ will be called {\it \color{blue} admissible} if it factors through \eqref{EQ: specialization of the u's}; in other words if there is a map $f:R\rightarrow \ZZ[(\zeta_{e_{\mathcal{C}}})]$ such that $f\circ\theta(u_{\mathcal{C},j})=\zeta_{e_{\mathcal{C}}}^j$. 
\end{definition}

Two particular specializations are fundamental in what follows. We first pick a set of parameters $\bm x:=(x_{\mathcal{C}})_{\mathcal{C}\in\mathcal{A}/W}$ and the single parameter $x$ and define the following ring maps:\begin{align}
\theta_{\bm x}:\ZZ[\bm u,\bm u^{-1}]&\rightarrow \ZZ[\bm x,\bm x^{-1}] \quad\quad \quad\quad\text{ and }& \theta_x:\ZZ[\bm u,\bm u^{-1}]&\rightarrow \ZZ[x,x^{-1}]\nonumber \\
\theta_{\bm x}(u_{\mathcal{C},j})&=\begin{cases} x^{\phantom{N_W}}_{\mathcal{C}}&\text{if }j=0\\ \zeta_{e_{\mathcal{C}}}^j&\text{if }j\neq 0\end{cases}  &\theta_x(u_{\mathcal{C},j})&=\begin{cases} x&\text{if }j=0\\ \zeta_{e_{\mathcal{C}}}^j&\text{if }j\neq 0\end{cases} \label{EQ: admissible specializations}
\end{align}
Both $\theta_{\bm x}$ and $\theta_x$ are admissible specializations (as seen by further sending $x_{\mathcal{C}}$ or $x$ to $1$). We write $\mathcal{H}_{\bm x}(W)$ and $\mathcal{H}_x(W)$ for the corresponding Hecke algebras, while noting that the latter is the analogue of the $1$-parameter Iwahori-Hecke algebra of real reflection groups $W$.

\subsubsection*{Artin-like presentations and the BMR-freeness theorem}

Bessis \cite{bes-zariski} has shown that the braid groups $B(W)$ always have ``Artin-like" presentations. These are presentations of the form $$\langle \bm s_1,\cdots, \bm s_n\ |\ p_j(\bm s_1,\cdots \bm s_n)= q_j(\bm s_1,\cdots \bm s_n)\rangle,$$ where the $\bm s_i$'s are braid reflections (so they equal $\bm s_{\mathcal{C},\gamma}$ for suitable $\mathcal{C}$ and $\gamma$) and their images $s_H \in W$ form a minimal generating set of (distinguished) reflections. Furthermore, the relations $(p_j,q_j)$ encode positive words of equal length in the $\bm s_i$'s and are such so that by adding the order relations $\bm s_i^{e_H}=1$, one obtains a presentation of the group $W$.

By now, such Artin-like presentations have been found for all braid groups $B(W)$ (see \cite[Appendix~A.2]{broue-book-braid-groups}). With access to these, one can write down explicit presentations for the Hecke algebras and with them attempt to study their various structural properties and invariants.

\begin{example}\label{Example Hecke algebra of G_26}
The generic Hecke algebra of $G_{26}$ (over the ring $\ZZ[x_0^{\pm 1},\cdots y_2^{\pm 1}]$) is:
\begin{align*}
\mathcal{H}(G_{26})\ = \ \langle\ \bm s,\bm t,\bm u\ |\ &\bm{stst}=\bm{tsts},\ \bm{su}=\bm{us},\ \bm{tut}=\bm{utu},\\
&(\bm s-x_0)(\bm s-x_1)=0\\
&(\bm t\,-y_0)(\bm t\,-\,y_1)(\bm t\,-y_2)=0\\
&(\bm u-y_0)(\bm u-y_1)(\bm u-y_2)=0\ \rangle
\end{align*}
The braid reflections $\bm t$ and $\bm u$ are conjugate (although this is a bit hard to see from the given presentation of $B(G_{26})$), so we use the same set of variables for their deformed order relations. After the specializations $(x_0,x_1)=(1,-1)$, $(y_0,y_1,y_2)=(1,\zeta_3,\zeta_3^2)$,  we obtain the following Coxeter-like presentation of $G_{26}$:$$ G_{26}\ =\ \langle s,t,u\ |\ stst=tsts,\ su=us,\ tut=utu,\ s^2=t^3=u^3=1\ \rangle.$$
\end{example}

This definition of Hecke algebras, which recovers the usual Iwahori-Hecke algebras when $W$ is a Coxeter group, is due to Brou\'e, Malle, and Rouquier, and was introduced in their seminal paper \cite{broue-malle-rouquier-braid-hecke}. There, they also made various conjectures about these Hecke algebras, the most important of which was until recently known as \textbf{\textit{``The BMR freeness conjecture"}}:

\begin{theorem*}\cite{etingof-proof-of-BMR-char-0}\cite[after Thm.~3.5]{chlouveraki-et-al-arxiv-BMR}

The algebra $\mathcal{H}(W)$ is a free $\ZZ[\bm u,\bm u^{-1}]$-module of rank $|W|$.
\end{theorem*}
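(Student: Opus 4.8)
The plan is to prove the two rank inequalities separately: that $\mathcal{H}(W)$ is \emph{spanned} by $|W|$ elements over $A:=\ZZ[\bm u,\bm u^{-1}]$ (the upper bound), and that those elements are \emph{linearly independent} over the fraction field $F:=\op{Frac}(A)$ (the lower bound). Freeness of rank exactly $|W|$ then follows from a short commutative-algebra step. A choice of $|W|$ spanning elements gives a surjection $\varphi\colon A^{|W|}\to\mathcal{H}(W)$ with kernel $K$, fitting in
$$0\to K\to A^{|W|}\xrightarrow{\ \varphi\ }\mathcal{H}(W)\to 0.$$
Since $A$ is a Noetherian domain, $A^{|W|}$ and hence its submodule $K$ are torsion-free; tensoring the sequence with the flat $A$-algebra $F$ shows that the generic ranks add to $|W|$. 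Thus, once we know $\dim_F\big(\mathcal{H}(W)\otimes_A F\big)=|W|$, the kernel $K$ has generic rank $0$, so the torsion-free module $K$ vanishes and $\varphi$ is an isomorphism, giving $\mathcal{H}(W)\cong A^{|W|}$.

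For the spanning bound I would fix an Artin-like presentation of $B(W)$ and a set-theoretic section $w\mapsto \bm w$ of $\pi\colon B(W)\twoheadrightarrow W$, sending each $w$ to a positive word of minimal length in the braid reflections that lifts a reduced expression for $w$. Let $M\subseteq\mathcal{H}(W)$ be the $A$-span of $\{T_{\bm w}:w\in W\}$. The key point is that $M$ is stable under right multiplication by every braid reflection $T_{\bm s}$: the deformed order relation \eqref{EQ: deformed order relations} rewrites $T_{\bm s}^{\,e_{\mathcal{C}}}$ as an $A$-combination of $T_{\bm s}^{\,0},\dots,T_{\bm s}^{\,e_{\mathcal{C}}-1}$, while the braid relations of the presentation, together with an exchange-condition / Matsumoto-type rewriting, let one return the resulting words to the chosen canonical form. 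Since $1\in M$ and $M$ is a right ideal containing all the generators, $M=\mathcal{H}(W)$ and the rank is at most $|W|$. This direction is technical but essentially formal once the Artin-like presentation is available.

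The lower bound is the crux, and it is the part with no elementary case-free proof. After the specialization \eqref{EQ: specialization of the u's} the Hecke algebra degenerates to the (twisted) group algebra $\ZZ[(\zeta_{e_{\mathcal{C}}})][W]$, which has dimension $|W|$; this shows the fiber attains the full dimension at that one point, but a single specialization cannot by itself preclude a drop in generic rank. To pin the generic dimension from below I would pass to characteristic $0$ and use the rational Cherednik algebra $H_c(W)$: its category $\mathcal{O}$ satisfies the PBW property and has exactly $|\op{Irr}(W)|$ standard objects, and the Knizhnik--Zamolodchikov functor sends them to modules over a specialization of $\mathcal{H}(W)$, with the monodromy of the Dunkl connection on $V^{\op{reg}}/W$ realizing the regular representation of $B(W)$ in the classical limit. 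This produces a $|W|$-dimensional $\mathcal{H}(W)$-module over $\CC$ on which the action is faithful enough to force $\dim\geq|W|$, hence equality over any characteristic-$0$ specialization.

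The last step, propagating this characteristic-$0$ dimension count to freeness over the full integral ring $A$, requires additional integrality and flatness arguments, and it is here that the Shephard--Todd classification enters: for the exceptional groups $G_4,\dots,G_{37}$ the generic independence of the $|W|$ spanning elements is verified by explicit computation rather than a uniform mechanism. I therefore expect the main obstacle to be precisely this lower bound: unlike the spanning direction, there is no known way to exhibit $|W|$ linearly independent elements — or a non-degenerate symmetrizing trace — uniformly across all $W$, which is exactly why a statement so clean to write still depends on the classification.
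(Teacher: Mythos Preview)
The paper does not prove this theorem at all: it merely quotes it as an external result (the former ``BMR freeness conjecture''), with citations to Etingof and to the survey of Chlouveraki et al., and then moves on to use it. So there is no proof in the paper to compare your proposal against.

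That said, your sketch misidentifies where the difficulty lies, and this is a genuine gap. You write that the spanning direction is ``technical but essentially formal once the Artin-like presentation is available,'' invoking an ``exchange-condition / Matsumoto-type rewriting.'' This is precisely what fails for complex reflection groups: outside the real case there is \emph{no} analogue of Matsumoto's theorem or of the exchange condition, and the Artin-like presentations of Bessis carry no normal-form theory. Producing $|W|$ elements that span $\mathcal{H}(W)$ over $A$ is exactly the step that resisted a uniform argument and required two decades of case-by-case work (Ariki, Ariki--Koike, Brou\'e--Malle, Marin, Marin--Pfeiffer, Chavli, Tsuchioka, \dots), culminating in the references the paper cites.

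Conversely, the part you call ``the crux'' --- that the algebra has dimension at least $|W|$ over the fraction field in characteristic~$0$ --- is exactly what Etingof proved \emph{uniformly} via the KZ functor and the Cherednik algebra (the very first citation attached to the theorem). So the roles of your two bounds are reversed: the lower bound has a case-free proof, and it is the \emph{spanning} bound for which ``the Shephard--Todd classification enters.'' Your commutative-algebra reduction at the top is fine, but it rests on a spanning set that you have not actually produced.
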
 

\subsection{Tits' deformation theorem for admissible specializations}

For this work, the first important consequence of the BMR-freeness theorem is that it determines, via Tits' deformation theorem, a  bijection between the  irreducible representations of $W$ and those of the Hecke algebra. The reader might refer to  \cite[\S7]{geck-pfeiffer-book} for proofs and terminology. 

To apply Tits' deformation theorem, we first have to move to split extensions of $\mathcal{H}(W)$ and of the group algebra of $W$. For the latter, we could simply work over $\CC[W]$, but it takes little effort to describe its minimal splitting field. To begin with, it is easy to see \cite[Corol.~3.2]{bessis-corps-de-definition} that the reflection representation $V$ of $W$ can be realized over the field $K$ generated by the traces of the elements of $W$ on $V$. It is a theorem of Benard and Bessis \cites{benard-unitary}{bessis-corps-de-definition} that in fact all representations of $W$ can be realized over $K$.

We henceforth call $K$ the {\it field of definition} of $W$; it equals $\QQ$ when $W$ is a Weyl group and satisfies $K\leq \mathbb{R}$ when $W$ is a finite Coxeter group. One might then hope that $K(\bm u)$ is a splitting field for $\mathcal{H}(W)$. Although this is not the case, the answer is only slightly more complicated. Assuming the BMR-freeness conjecture, Malle proved (with further case-specific arguments, but see \S\ref{Section: On the uniformity of the proofs}):

\begin{proposition}\cite[Thm.~5.2]{malle-fake-degrees} \label{Prop. Malle's charact. of splitting field L of H}
Let $K$ be the field of definition of $W$ as above. Then, there exists a number $N_W$ such that for a set of parameters $\bm v:=(v_{\mathcal{C},j})_{(\mathcal{C}\in\mathcal{A}/W),(0\leq j\leq e_{\mathcal{C}}-1) }$ that satisfy $$
v_{\mathcal{C},j}^{N_W}=\zeta_{e_{\mathcal{C}}}^{-j}u^{\phantom{N_W}}_{\mathcal{C},j},$$ the field $K(\bm v)$ is a splitting field for $\mathcal{H}(W)$. We write $\mathcal{H}_{K(\bm v)}(W):=\mathcal{H}(W)\otimes_{\ZZ[\bm u,\bm u^{-1}]}K(\bm v)$.
\end{proposition}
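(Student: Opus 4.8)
The plan is to prove that $\mathcal{H}_{K(\bm v)}(W)$ is \emph{split semisimple}, i.e. that it is isomorphic to a product of matrix algebras over $K(\bm v)$, and I would organize the argument around Tits' deformation theorem, whose hypotheses are now all available. By the BMR-freeness theorem, $\mathcal{H}(W)$ is free of rank $|W|$ over $\ZZ[\bm u,\bm u^{-1}]$, and the specialization $\sigma$ of \eqref{EQ: specialization of the u's} reduces it to the group ring $\ZZ[(\zeta_{e_{\mathcal{C}}})][W]$, which becomes split semisimple over $\CC$, and in fact already over the field of definition $K$ by the Benard--Bessis theorem. First I would pass to the algebraic closure $\overline{K(\bm u)}$ and invoke the standard consequence of Tits' deformation theorem that a finite-rank algebra whose $\sigma$-specialization is split semisimple has semisimple generic fibre together with a canonical dimension-preserving bijection $\op{Irr}(W)\longleftrightarrow\op{Irr}\big(\mathcal{H}(W)\otimes\overline{K(\bm u)}\big)$. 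In particular $\mathcal{H}(W)\otimes\overline{K(\bm u)}$ is split semisimple with exactly $|\op{Irr}(W)|$ simple modules, whose squared dimensions sum to $|W|$.

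This reduces the proposition to a \emph{realizability} statement: it suffices to exhibit, for each of these absolutely irreducible modules, a representing matrix algebra already defined over the subfield $K(\bm v)\subset\overline{K(\bm u)}$ (equivalently, that over $K(\bm v)$ all character values lie in the field and all Schur indices are trivial). The choice of the $v_{\mathcal{C},j}$ is exactly what makes this plausible. In any module the braid reflection $\bm s_{\mathcal{C}}$ satisfies its deformed order relation \eqref{EQ: deformed order relations}, so its eigenvalues lie among the $u_{\mathcal{C},j}=\zeta_{e_{\mathcal{C}}}^{j}v_{\mathcal{C},j}^{N_W}$; when one writes down explicit matrix models --- generalized Hoefsmit/Ariki--Koike models for the family $G(r,p,n)$, and seminormal or $W$-graph models for the exceptional types --- the entries are not polynomials in the $u_{\mathcal{C},j}$ but involve radicals and denominators built from the eigenvalue differences $u_{\mathcal{C},i}-u_{\mathcal{C},j}$. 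Adjoining the $N_W$-th roots $v_{\mathcal{C},j}$ is designed precisely to clear these radicals, so that the suitably rescaled matrices have entries in $K(\bm v)$.

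The remaining and genuinely hard step is to show that a \emph{single, uniform} exponent $N_W$ works for all of $\op{Irr}(W)$ and that no further extension of $K(\bm v)$ is needed; I expect this to be the main obstacle, and it is where a case-free argument seems out of reach. One must bound, for every simple module, both the field generated by its character values and its Schur index over $K(\bm v)$, and these computations proceed through the Shephard--Todd classification using the explicit models above: for the infinite series the Ariki--Koike models make the bound transparent, whereas for the $34$ exceptional groups one falls back on the case-by-case construction of representations and computation of character fields (which is exactly why, as already noted above, Malle's original argument becomes case-specific). Choosing $N_W$ uniformly then amounts to taking a common multiple of the local exponents produced in each case. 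Once realizability over $K(\bm v)$ is established for every simple module, these $|\op{Irr}(W)|$ absolutely irreducible, pairwise non-isomorphic representations of $\mathcal{H}_{K(\bm v)}(W)$ already account for the full rank $|W|$ by the dimension count of the first step, whence $\mathcal{H}_{K(\bm v)}(W)$ is split semisimple, which is the assertion.
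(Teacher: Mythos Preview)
The paper does not give its own proof of this proposition: it is quoted as \cite[Thm.~5.2]{malle-fake-degrees} and explicitly attributed to Malle (``Assuming the BMR-freeness conjecture, Malle proved (with further case-specific arguments\ldots)''). So there is no in-paper argument to compare your proposal against.

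That said, your outline is a fair description of the shape such a proof takes. The reduction via Tits' deformation theorem to a realizability statement over $K(\bm v)$ is correct in spirit, and you rightly identify the genuine content as the verification that a single exponent $N_W$ suffices for all simples, which is established by Malle case-by-case through the classification (Ariki--Koike/Hoefsmit-type models for $G(r,p,n)$, explicit matrix models for the exceptionals). You should be aware, though, that what you have written is a strategy rather than a proof: the step ``adjoining the $N_W$-th roots $v_{\mathcal{C},j}$ is designed precisely to clear these radicals'' is where all the work lies, and nothing in your sketch actually bounds the radicals that appear or shows the Schur indices are trivial over $K(\bm v)$. For the purposes of the present paper this is fine, since the proposition is used as a black box; but if asked to \emph{prove} it you would still owe the case analysis.
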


Of course, after the BMR-freeness conjecture, $\mathcal{H}_{K(\bm v)}(W)$ will also be a free $K[\bm v, \bm v^{-1}]$-module and we may extend the specialization \eqref{EQ: specialization of the u's} to a map $K[\bm v, \bm v^{-1}]\rightarrow K$, which we also call $\sigma$ and is given by\begin{equation}
v^{\phantom{N_W}}_{\mathcal{C},j}\xrightarrow{\displaystyle \ \sigma\ } 1. \label{EQ: Specialization v-> 1}
\end{equation} Notice that, just as in \eqref{EQ: specialization of the u's}, the induced map $\tilde{\sigma}:\mathcal{H}_{K(\bm v)}(W)\rightarrow K[W]$ agrees with the fixed surjection $B(W)\twoheadrightarrow W$. The freeness over $\ZZ[\bm v, \bm v^{-1}]$, the fact that $K(\bm v)$ and $K$ are splitting fields for $\mathcal{H}(W)$ and $W$ respectively, and the semisimplicity of $K[W]$, constitute the assumptions of Tits' deformation theorem (see \cite[\S7.3-4]{geck-pfeiffer-book}). Its conclusion is then:

\begin{theorem}\label{Thm: Tits deformation theorem}
The algebra $\mathcal{H}_{K(\bm v)}(W)$ is also semisimple and the specialization map $\sigma$ induces a bijection $$ \op{d}_{\sigma}:\op{Irr}\big(\mathcal{H}_{K(\bm v)}(W)\big) \xrightarrow{\displaystyle\sim} \op{Irr}\big(K[W]\big), $$ between the irreducible modules of the two algebras, that respects the spectra of elements. That is, if $U$ and $\op{d}_{\sigma}(U)$ are irreducible modules matched by $\op{d}_{\sigma}$, then the following diagram commutes: 
\begin{equation}%
\begin{tikzcd}
\mathcal{H}_{K(\bm v)}(W)\ni T_{\bm g} \arrow{rrr}{\displaystyle\quad  \mathfrak{p}_U\quad} \arrow[swap,xshift=1.1cm]{d}{\displaystyle \tilde{\sigma}} &&& K[\bm v,\bm v^{-1}][X] \arrow[xshift=-0.5cm]{d}{\displaystyle t_{\sigma}} \\%
\quad \quad K[W]\ni g \arrow{rrr}{\displaystyle \quad \mathfrak{p}_{\op{d}_{\sigma}(U)}\quad  }&&& K[X]\quad \quad \quad 
\end{tikzcd}\label{EQ: spectra are respected by tits def.} 
\end{equation}
The horizontal maps $\mathfrak{p}_M$ send an element $T_{\bm g}$ or $g$ to its characteristic polynomial under the representation $M$, while the vertical maps are naturally induced by $\sigma$. In particular, since character values are determined by the spectra of elements, if $\chi_{\bm v}$ and $\chi$ are the characters associated to $U$ and $\op{d}_{\sigma}(U)$ respectively, we will have \begin{equation}
\chi(g)= \sigma\big(\chi_{\bm v}(T_{\bm g})\big).\label{EQ: characters are respected by tits def.}
\end{equation}
\end{theorem}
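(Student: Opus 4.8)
The plan is to invoke Tits' deformation theorem as a black box, having verified that its three standard hypotheses hold in our setting. The theorem itself (see \cite[\S7.3--4]{geck-pfeiffer-book}) is a general statement about an algebra $\mathcal{H}$ that is free and finite over an integrally closed domain $A$ with fraction field $F$, equipped with a specialization $\theta:A\to k$ into a field $k$; under suitable splitting and semisimplicity assumptions it produces a spectrum-preserving bijection between irreducibles over the splitting field of $\mathcal{H}_F$ and those of $\mathcal{H}_k$. So the real content of my proof is not to reprove the deformation theorem, but to assemble the hypotheses from the results recalled earlier in this section and then read off the two displayed conclusions \eqref{EQ: spectra are respected by tits def.} and \eqref{EQ: characters are respected by tits def.}.

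**First I would set up the data.** I take the integrally closed domain to be $A:=K[\bm v,\bm v^{-1}]$, with fraction field $F=K(\bm v)$, the algebra to be $\mathcal{H}_{K(\bm v)}(W)$, the target field to be $k=K$, and the specialization to be $\sigma:v_{\mathcal{C},j}\mapsto 1$ from \eqref{EQ: Specialization v-> 1}. I would then check the three inputs the deformation theorem requires, exactly as flagged in the sentence preceding the statement. (i) \emph{Freeness}: by the BMR-freeness theorem $\mathcal{H}(W)$ is free of rank $|W|$ over $\ZZ[\bm u,\bm u^{-1}]$, so after base change along $\ZZ[\bm u,\bm u^{-1}]\to K[\bm v,\bm v^{-1}]$ the algebra $\mathcal{H}_{K(\bm v)}(W)$ is free of rank $|W|$ over $A$. (ii) \emph{Splitting fields}: $K(\bm v)$ is a splitting field for $\mathcal{H}(W)$ by Prop.~\ref{Prop. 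Malle's charact. of splitting field L of H}, and $K$ is a splitting field for $W$ by the Benard--Bessis theorem recalled just before that proposition. (iii) \emph{Semisimplicity of the target}: $K[W]$ is semisimple because $K$ has characteristic $0$ and $W$ is finite (Maschke). These are precisely ``the assumptions of Tits' deformation theorem'' named in the paragraph above the statement.

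**Next I would extract the two conclusions.** The deformation theorem then yields at once that $\mathcal{H}_{K(\bm v)}(W)$ is semisimple and that $\sigma$ induces a bijection $\op{d}_\sigma$ on isomorphism classes of irreducibles which is \emph{spectrum-preserving}. To phrase the spectrum-preservation as the commuting square \eqref{EQ: spectra are respected by tits def.}, I would recall that for a module $M$ over an $R$-algebra, sending an element to its characteristic polynomial in the representation $M$ is a well-defined map $\mathfrak{p}_M$ landing in $R[X]$; the content of ``respects spectra'' is exactly that for matched modules $U$ and $\op{d}_\sigma(U)$, applying $\sigma$ coefficientwise to $\mathfrak{p}_U(T_{\bm g})$ gives $\mathfrak{p}_{\op{d}_\sigma(U)}(\tilde\sigma(T_{\bm g}))$, where $\tilde\sigma(T_{\bm g})=g$ is the image in $K[W]$ (this is the compatibility \eqref{EQ: specialization of the u's}, extended to the $\bm v$-level, noted right after \eqref{EQ: Specialization v-> 1}). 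That is the commutativity of the diagram. Finally, since in characteristic $0$ the character value $\chi_{\bm v}(T_{\bm g})$ is (up to sign) a coefficient of the characteristic polynomial $\mathfrak{p}_U(T_{\bm g})$ --- concretely it is the trace, recoverable from the spectrum --- chasing the square gives $\chi(g)=\sigma\bigl(\chi_{\bm v}(T_{\bm g})\bigr)$, which is \eqref{EQ: characters are respected by tits def.}.

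**The one step that needs care**, and which I expect to be the only genuine obstacle, is the verification that $\sigma$ really satisfies the hypotheses of the \emph{form} of the deformation theorem one wants to cite --- in particular that $A=K[\bm v,\bm v^{-1}]$ is integrally closed (it is, being a localization of a polynomial ring) and that $\sigma$ extends to the requisite valuation or specialization data relating $K(\bm v)$ to $K$. Everything else is a formal consequence once (i)--(iii) are in hand. I would therefore keep the proof short: cite \cite[\S7.3--4]{geck-pfeiffer-book} for the abstract theorem, point to the BMR-freeness theorem, Prop.~\ref{Prop. Malle's charact. of splitting field L of H}, and Benard--Bessis for the three hypotheses, and note that \eqref{EQ: characters are respected by tits def.} is the trace specialization of \eqref{EQ: spectra are respected by tits def.}.
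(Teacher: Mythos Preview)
Your proposal is correct and matches the paper's approach: the paper does not prove the theorem either but simply cites \cite[\S7.3--4]{geck-pfeiffer-book}, having assembled the hypotheses (BMR-freeness, Malle's splitting field, Benard--Bessis, semisimplicity of $K[W]$) in the preceding paragraphs. The one point the paper singles out that you gloss over is that it is \emph{not} automatic that $\mathfrak{p}_U(T_{\bm g})$ lands in $K[\bm v,\bm v^{-1}][X]$ rather than merely in $K(\bm v)[X]$, since the irreducible module $U$ is a priori only realized over the splitting field; the paper flags this as requiring \cite[Prop.~7.3.8]{geck-pfeiffer-book}.
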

\begin{proof}[Remark:]
It is not a priori clear that the characteristic polynomials of elements $T_{\bm g}$ live in $K[\bm v,\bm v^{-1}][X]$ (instead of just $K(\bm v)[X]$); this is shown in \cite[Prop.~7.3.8]{geck-pfeiffer-book}. The existence of the map $d_{\sigma}$ and that it respects spectra is proved in [ibid, Thm.~7.4.3], and the fact that it is a bijection in [ibid, Thm.~7.4.6].
\end{proof}

We can apply Tits' deformation theorem on any admissible (see Defn.~\ref{Defn: admissible specializations}) specialization of $\mathcal{H}(W)$ by first moving to a splitting field as prescribed by Prop.~\ref{Prop. Malle's charact. of splitting field L of H}. In particular, for the algebras $\mathcal{H}_{\bm x}(W)$ and $\mathcal{H}_x(W)$ from \eqref{EQ: admissible specializations}, the corresponding splitting fields have to be $K(\bm y)$ and $K(y)$ respectively for parameters $\bm y:=(y_{\mathcal{C}})_{\mathcal{C}\in\mathcal{A}/W}$ and $y$ that satisfy $y_{\mathcal{C}}^{N_W}=x_\mathcal{C}$ and $y^{N_W}=x$.

Now Thm.~\ref{Thm: Tits deformation theorem} implies that we can simultaneously index the characters of $\mathcal{H}(W)$, $\mathcal{H}_{\bm x}(W)$, and $\mathcal{H}_x(W)$ by characters $\chi\in\widehat{W}$. Indeed, if say $f_{\bm x}$ is the factoring morphism of Defn.~\ref{Defn: admissible specializations}, we have \begin{equation}\op{Irr}\big(\mathcal{H}(W)\big)\xrightarrow{\displaystyle \ d_{\theta_{\bm x}}\ }\op{Irr}\big(\mathcal{H}_{\bm x}(W)\big)\xrightarrow{\displaystyle \ d_{f_{\bm x}}\ }\op{Irr}\big(K[W]\big),\label{EQ: admissible specializations and Tits factoring}\end{equation} where $d_{\theta_{\bm x}}$ and $d_{f_{\bm x}}$ are bijections which satisfy $d_{\sigma}=d_{\theta_{\bm x}}\circ d_{f_{\bm x}}$ and moreover respect spectra as in \eqref{EQ: spectra are respected by tits def.}. We will therefore denote the characters of the three Hecke algebras by $\chi_{\bm v}, \chi_{\bm y}$, and $\chi_y$ respectively, using the parameters $\bm v,\bm y,y$ that define the splitting fields.

\begin{definition}\label{Defn: rational character}
We say that a character of the Hecke algebra $\mathcal{H}(W)$ is {\it rational} with respect to the specializations $\theta_{\bm x}$ or $\theta_x$ (respectively {\it generically rational}) if its values lie in $K(\bm x)$ or $K(x)$ (respectively in $K(\bm u)$), as opposed to the splitting fields. Similarly we talk of a rational spectrum of some element $T_{\bm g}$ for a given representation and specialization.
\end{definition}

\begin{remark}
Notice that a character might be rational for the specialization $\theta_x$ but not for $\theta_{\bm x}$. This is for instance the case when a monomial of the form $\sqrt{x^{\phantom{N_W}}_{\mathcal{C},0}x^{\phantom{N_W}}_{\mathcal{C'},0}}$ appears as its value (which is not rational for $\theta_{\bm x}$ but becomes $x$ for $\theta_x$). For example, the group $G_6$ has $6$ characters that are not generically rational (see \cite[Table~8.1]{malle-fake-degrees}) but a CHEVIE \cites{chevie-all}{chevie-michel} calculation shows only $2$ irrational characters for $\theta_x$.
\end{remark}

\subsection{Character values on roots of the full twist}
\label{Section: Character values on roots of the full twist}

For a character $\chi_{\bm v}$ of the generic Hecke albegra $\mathcal{H}_{K(\bm v)}(W)$, let $m^{\chi_{\bm v}}_{\mathcal{C},j}$ denote the multiplicity of $u_{\mathcal{C},j}$ as an eigenvalue of any braid reflection $\bm s_{\mathcal{C},\gamma}$ in the representation $U$ associated with $\chi_{\bm v}$. After Tits' deformation theorem (in particular, after \eqref{EQ: spectra are respected by tits def.}) this equals the multiplicity of $\zeta_{e_{\mathcal{C}}}^j=\sigma (u_{\mathcal{C},j})$ as an eigenvalue of any {\it distinguished} reflection $s_H,\ H\in\mathcal{C}$, in the representation $\op{d}_{\sigma}(U)$. 

The same is true for any admissible speciliazation $\theta$ (notice that since $f\circ\theta (u_{\mathcal{C},j})=\zeta_{e_{\mathcal{C}}}^j$, the elements $\theta(u_{\mathcal{C},j})$ cannot be equal), so for the analogously defined numbers $m^{\chi_{\bm y}}_{\mathcal{C},j},\ m^{\chi_y}_{\mathcal{C},j},\ m^{\chi}_{\mathcal{C},j}$, we have $$  m^{\chi_{\bm v}}_{\mathcal{C},j}\ =\ m^{\chi_{\bm y}}_{\mathcal{C},j}\ =\ m^{\chi_y}_{\mathcal{C},j}\ =\ m^{\chi}_{\mathcal{C},j}.$$ In view of this, we will only use the latter notation $m^{\chi}_{\mathcal{C},j}$ from now on. Notice finally that by the defining relations \eqref{EQ: deformed order relations}, the only possible eigenvalues for any $\bm s_{\mathcal{C},\gamma}$ are precisely the $u_{\mathcal{C},j}$'s. We therefore have (for any $\mathcal{C}\in\mathcal{A}/W$) \begin{equation} \sum_{j=0}^{e_{\mathcal{C}}-1}m^{\chi}_{\mathcal{C},j}=\chi(1).\label{Eq: sum of m_H,j's=chi(1)}\end{equation}

The following proposition is essential for the proof of our technical lemma (Prop.~\ref{Prop: The key technical lemma}). For completion, we reproduce here its proof following \cite[\S\,4:~E]{broue-michel-french-paper} very closely. To simplify its statement we first introduce the following notation (recall also that $\omega_{\mathcal{C}}=|\mathcal{C}|$ for an orbit $\mathcal{C}\in\mathcal{A}/W$):

\begin{definition}\label{Def. Pre character value stuff}
Consider\footnote{We move to a larger ring, so that the roots $u_{\mathcal{C},j}^{1/\chi(1)}$ are well defined. Shortly however, Prop.~\ref{Prop: character values on roots of full twist} will show that these monomials actually live in $K[\bm v]$.} the element of $K[\bm u^{1/|W|}]$ given as $$z_{\chi_{\bm v}}(\bm \pi):=\prod_{\mathcal{C}\in\mathcal{A}/W}\prod_{j=0}^{e_{\mathcal{C}}-1}u_{\mathcal{C},j}^{(1/\chi(1))m^{\chi}_{\mathcal{C},j}e_{\mathcal{C}}\omega_{\mathcal{C}}},$$ and, for a regular number $d$ (see Defn.~\ref{Defn: regular element}), write $$z_{\chi_{\bm v}}(\bm \pi)^{1/d}:=\prod_{\mathcal{C}\in\mathcal{A}/W}\prod_{j=0}^{e_{\mathcal{C}}-1}u_{\mathcal{C},j}^{(1/d\chi(1))m^{\chi}_{\mathcal{C},j}e_{\mathcal{C}}\omega_{\mathcal{C}}}.$$
Finally, denote by $N(\chi)$ the quantity $$N(\chi):=\sum_{\mathcal{C}\in\mathcal{A}/W}\omega_{\mathcal{C}}\cdot\sum_{j=0}^{e_{\mathcal{C}}-1}jm^{\chi}_{\mathcal{C},j}.$$
\end{definition}

\begin{remark}\label{Remark: N(x) via coexponents}
$N(\chi)$ usually denotes the sum of the $\chi^{*}$-exponents (see \cite[Chapter~4:~\S4]{lehrer-taylor-book}) of the representation that affords $\chi$. This in fact agrees with the definition above (see \cite[Prop.~4.1]{broue-michel-french-paper}, or \cite[Lemma~10.15 and Remark~10.12]{lehrer-taylor-book} which includes Gutkin's theorem). We are only going to use it as a symbol (but see also Remark~\ref{Remark: cox numbers and N(x)+N(x*)}).
\end{remark}

\begin{proposition}\label{Prop: character values on roots of full twist}\cite[Prop.~4.16]{broue-michel-french-paper}\newline
For a character $\chi_{\bm v}$ of the generic Hecke algebra, the values on the full twist $T_{\bm\pi}$ are given by $$ \chi_{\bm v}(T_{\bm\pi})=\chi(1)e^{-2i\pi N(\chi)/\chi(1)}z_{\chi_{\bm v}}(\bm \pi).$$
Moreover, if $\bm w$ is a $d$-th root of some power $\bm\pi^l$ and its image in $W$ under the fixed surjection \eqref{1PBW1} is $w$, we have $$\chi_{\bm v}(T_{\bm w})=\chi(w)e^{-2i\pi lN(\chi)/d\chi(1)}z_{\chi_{\bm v}}(\bm\pi)^{l/d}. $$
\end{proposition}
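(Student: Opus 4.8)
The plan is to reduce everything to two facts: that the full twist is central, and that its image in the abelianization $B^{\op{ab}}$ is completely explicit (Corol.~\ref{Cor: the abelianization of the full twist}). Since $\bm\pi$ is central in $B(W)$ (Corol.~\ref{Corol. full twist is central in B(W)}), its image $T_{\bm\pi}$ is central in $\mathcal{H}_{K(\bm v)}(W)$, which is split semisimple by Thm.~\ref{Thm: Tits deformation theorem}. Schur's lemma then forces $T_{\bm\pi}$ to act as a scalar $\lambda_\chi\in K(\bm v)$ on the irreducible module $U$ affording $\chi_{\bm v}$, so that $\chi_{\bm v}(T_{\bm\pi})=\chi(1)\lambda_\chi$. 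Everything reduces to computing the scalar $\lambda_\chi$.

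First I would pin down $\lambda_\chi$ up to a root of unity by computing $\det_U(T_{\bm\pi})$ in two ways. On one hand it equals $\lambda_\chi^{\chi(1)}$. On the other, the determinant character factors through $B^{\op{ab}}$, so by Corol.~\ref{Cor: the abelianization of the full twist} I can write $\bm\pi^{\op{ab}}=\prod_{\mathcal{C}}(\bm s_{\mathcal{C}}^{\op{ab}})^{e_{\mathcal{C}}\omega_{\mathcal{C}}}$ and use that, by the deformed order relations \eqref{EQ: deformed order relations} together with the multiplicity notation, $\det_U(T_{\bm s_{\mathcal{C},\gamma}})=\prod_{j}u_{\mathcal{C},j}^{m^{\chi}_{\mathcal{C},j}}$. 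This gives $\det_U(T_{\bm\pi})=\prod_{\mathcal{C}}\prod_j u_{\mathcal{C},j}^{m^{\chi}_{\mathcal{C},j}e_{\mathcal{C}}\omega_{\mathcal{C}}}=z_{\chi_{\bm v}}(\bm\pi)^{\chi(1)}$, so $\lambda_\chi=\zeta\cdot z_{\chi_{\bm v}}(\bm\pi)$ for a $\chi(1)$-th root of unity $\zeta$ which, living in a characteristic-zero function field, must be a constant. To identify $\zeta$ I would specialize via $\sigma$: since $\bm\pi\in P(W)=\ker(\pi)$ its image in $W$ is the identity, so $\sigma\big(\chi_{\bm v}(T_{\bm\pi})\big)=\chi(1)$ and hence $\sigma(\lambda_\chi)=1$; a direct computation gives $\sigma\big(z_{\chi_{\bm v}}(\bm\pi)\big)=e^{2\pi i N(\chi)/\chi(1)}$, forcing $\zeta=e^{-2\pi i N(\chi)/\chi(1)}$ and proving the first formula.

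For the second statement I would exploit that $\bm w^d=\bm\pi^l$ gives $T_{\bm w}^d=T_{\bm\pi}^l=\lambda_\chi^l\cdot\op{Id}$ on $U$, so every eigenvalue $\mu_k$ of $T_{\bm w}$ is a $d$-th root of the scalar $\lambda_\chi^l$. Setting $\Lambda:=e^{-2\pi i lN(\chi)/(d\chi(1))}z_{\chi_{\bm v}}(\bm\pi)^{l/d}$, the first part gives $\Lambda^d=\lambda_\chi^l$, so each $\mu_k=\Lambda\cdot\xi_k$ with $\xi_k$ a $d$-th root of unity that is again constant. Thus $\chi_{\bm v}(T_{\bm w})=\Lambda\sum_k\xi_k$, and applying $\sigma$ --- under which $T_{\bm w}\mapsto w$, so that $\sigma\big(\chi_{\bm v}(T_{\bm w})\big)=\chi(w)$, while $\sigma(\Lambda)=1$ by the computation of $\sigma\big(z_{\chi_{\bm v}}(\bm\pi)\big)$ above --- yields $\sum_k\xi_k=\chi(w)$ and therefore the claimed formula.

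The main obstacle I anticipate is the bookkeeping around the fractional-power monomials: justifying that $z_{\chi_{\bm v}}(\bm\pi)$ and $z_{\chi_{\bm v}}(\bm\pi)^{l/d}$ are genuine, well-defined elements (the footnote to Defn.~\ref{Def. Pre character value stuff} asserts they lie in $K[\bm v]$, using $u_{\mathcal{C},j}=\zeta_{e_{\mathcal{C}}}^j v_{\mathcal{C},j}^{N_W}$ from Prop.~\ref{Prop. Malle's charact. of splitting field L of H}), and, relatedly, the clean statement that the ambiguous roots of unity $\zeta$ and $\xi_k$ are constants rather than functions of the $\bm v$. The latter is where characteristic zero is essential: any root of unity in a finite extension of $K(\bm v)$ is algebraic over $\QQ$ and hence independent of the transcendentals, which is exactly what lets the single specialization $\sigma$ determine these scalars completely.
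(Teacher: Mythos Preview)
Your proposal is correct and follows essentially the same strategy as the paper: compute the determinant of $T_{\bm\pi}$ via the abelianization of $B(W)$ (Corol.~\ref{Cor: the abelianization of the full twist}), use centrality and Schur's lemma to deduce that $T_{\bm\pi}$ acts by the scalar $z_{\chi_{\bm v}}(\bm\pi)$ times an unknown root of unity, pass to eigenvalues of $T_{\bm w}$ as $d$-th roots, and then specialize via $\sigma$ (Thm.~\ref{Thm: Tits deformation theorem}) to identify the ambiguous root-of-unity factors. The only difference is organizational---you treat the case of $\bm\pi$ first and then bootstrap to $\bm w$, whereas the paper does both at once---and you are slightly more explicit about why the undetermined roots of unity must be constants in $K(\bm v)$, a point the paper leaves implicit.
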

\begin{proof}
We are only going to prove the second statement, which reduces to the first for $d=l=1$.

Consider the determinant character $\op{det}_{\chi_{\bm v}}$ associated to $\chi_{\bm v}$. Because it is linear, it factors through the abelianization $B^{\op{ab}}$ and its values on powers of the full twist are given after Corol.~\ref{Cor: the abelianization of the full twist} by $$\op{det}_{\chi_{\bm v}}(T_{\bm \pi}^l)=\prod_{\mathcal{C}\in\mathcal{A}/W}\prod_{j=0}^{e_{\mathcal{C}}-1}u_{\mathcal{C},j}^{m^{\chi}_{\mathcal{C},j}\omega_{\mathcal{C}}e_{\mathcal{C}}l}=z_{\chi_{\bm v}}(\bm \pi)^{\chi(1)l}.$$

Now, since $\bm\pi^l$ is central in $B(W)$ (and therefore also in $\mathcal{H}_{K(\bm v)}(W)$), it acts on irreducible representations as a scalar. That is, its spectrum is given by $$\op{Spec}_{\chi_{\bm v}}(T_{\bm\pi}^l)=\{\xi z_{\chi_{\bm v}}(\bm\pi)^l\ (\chi(1)\text{-many times})\},$$ where $\xi$ is a $\chi(1)$-th root of unity. Since $\bm w^d=\bm\pi^l$, we further have that the spectrum of $T_{\bm w}$ is $$\op{Spec}_{\chi_{\bm v}}(T_{\bm w})=\{\xi_kz_{\chi_{\bm v}}(\bm\pi)^{l/d}\ |\ (1\leq k\leq \chi(1))\},$$ where the $\xi_k$ are $d$-th roots of $\xi$, which of course means that $$\chi_{\bm v}(T_{\bm w})=z_{\chi_{\bm v}}(\bm\pi)^{l/d}\sum_{k=1}^{\chi(1)}\xi_k.$$

We are only left with computing the sum of the $\xi_k$. Notice that after Tits' deformation theorem (in particular, the statement \eqref{EQ: characters are respected by tits def.} that character values are respected), we will have for the specialization $\sigma$ of \eqref{EQ: Specialization v-> 1} that $\sigma(\chi_{\bm v}(T_{\bm w}))=\chi(w)$. Since the right hand side of the previous equation will then also have to evaluate to $\chi(w)$ under $\sigma$, we will have that $$\sum_{k=1}^{\chi(1)}\xi_k=\chi(w)\sigma\big(z_{\chi_{\bm v}}(\bm \pi)^{l/d}\big)^{-1}.$$
Finally, recalling Defn.~\ref{Def. Pre character value stuff} and that $\sigma(u_{\mathcal{C},j})=\zeta_{e_{\mathcal{C}}}^j=\op{exp}(2i\pi j/e_{\mathcal{C}})$, it is easy to see that
$$\sigma(z_{\chi_{\bm v}}(\bm \pi)^{l/d})=\op{exp}\Big(2i\pi l/(d\chi(1))\sum_{\mathcal{C}\in\mathcal{A}/W}\omega_{\mathcal{C}}\sum_{j=1}^{e_{\mathcal{C}-1}}jm^{\chi}_{\mathcal{C},j}  \Big)=e^{2i\pi lN(\chi)/d\chi(1)}.$$%
\end{proof}

By applying the specialization $\theta_{\bm x}$ from \eqref{EQ: admissible specializations} on the previous proposition, we easily get: 

\begin{corollary}\label{Corol: character calculation for H_x(W)}
Let $\bm w$ be a $d$-th root of some power $\bm \pi^l$ as above and let $\chi_{\bm y}$ be a character of the specialization $\mathcal{H}_{\bm x}(W)$ as in \eqref{EQ: admissible specializations and Tits factoring}. We have \begin{enumerate}
\item\ \vspace{-0.35cm} $$\chi_{\bm y}(T_{\bm \pi})=\chi(1)\prod_{\mathcal{C}\in\mathcal{A}/W}x_{\mathcal{C}}^{(1/\chi(1))m^{\chi}_{\mathcal{C},0}e_{\mathcal{C}}\omega_{\mathcal{C}}}.$$
\item\ \vspace{-0.35cm} $$\chi_{\bm y}(T_{\bm w})=\chi(w)\prod_{\mathcal{C}\in\mathcal{A}/W}x_{\mathcal{C}}^{(l/d\chi(1))m^{\chi}_{\mathcal{C},0}e_{\mathcal{C}}\omega_{\mathcal{C}}}.$$
\end{enumerate}
\end{corollary}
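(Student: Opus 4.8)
The plan is to obtain the corollary by directly specializing the generic formula of Prop.~\ref{Prop: character values on roots of full twist} along the admissible map $\theta_{\bm x}$ of \eqref{EQ: admissible specializations}. The first ingredient I would invoke is the Tits-deformation correspondence \eqref{EQ: admissible specializations and Tits factoring}: because the bijection $d_{\theta_{\bm x}}$ between $\op{Irr}\big(\mathcal{H}(W)\big)$ and $\op{Irr}\big(\mathcal{H}_{\bm x}(W)\big)$ respects spectra, hence character values, exactly as in the commutative diagram \eqref{EQ: spectra are respected by tits def.}, the character $\chi_{\bm y}$ matched with $\chi_{\bm v}$ satisfies $\chi_{\bm y}(T_{\bm g})=\theta_{\bm x}\big(\chi_{\bm v}(T_{\bm g})\big)$ for every $\bm g\in B(W)$ (with $\theta_{\bm x}$ extended to the splitting fields as after Prop.~\ref{Prop. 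Malle's charact. of splitting field L of H}). It therefore suffices to apply $\theta_{\bm x}$ to the two closed forms of Prop.~\ref{Prop: character values on roots of full twist}, treating $\chi(1)$, $\chi(w)$, and the root-of-unity prefactors as scalars that $\theta_{\bm x}$ fixes.

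The only genuine computation is the image $\theta_{\bm x}\big(z_{\chi_{\bm v}}(\bm\pi)^{l/d}\big)$. Here I would split each inner product of Defn.~\ref{Def. Pre character value stuff} into its $j=0$ factor and its $j\geq 1$ factors. Since $\theta_{\bm x}(u_{\mathcal{C},0})=x_{\mathcal{C}}$, the $j=0$ factors assemble into precisely the monomial $\prod_{\mathcal{C}} x_{\mathcal{C}}^{(l/d\chi(1))m^{\chi}_{\mathcal{C},0}e_{\mathcal{C}}\omega_{\mathcal{C}}}$ appearing in the corollary. For $j\geq 1$ one has $\theta_{\bm x}(u_{\mathcal{C},j})=\zeta_{e_{\mathcal{C}}}^j=\exp(2\pi i j/e_{\mathcal{C}})$, so the corresponding factor becomes $\exp\big(2\pi i (l/d\chi(1))\, j\, m^{\chi}_{\mathcal{C},j}\omega_{\mathcal{C}}\big)$ once the $e_{\mathcal{C}}$ cancels. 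Collecting these over all $\mathcal{C}$ and all $j\geq 1$ and recognizing the exponent as the defining sum of $N(\chi)$ (the $j=0$ terms contribute nothing to that sum), this entire factor equals $e^{2\pi i l N(\chi)/d\chi(1)}$.

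Feeding this back in finishes the argument: the prefactor $e^{-2\pi i l N(\chi)/d\chi(1)}$ of Prop.~\ref{Prop: character values on roots of full twist} cancels exactly the root-of-unity factor just computed, leaving $\chi_{\bm y}(T_{\bm w})=\chi(w)\prod_{\mathcal{C}} x_{\mathcal{C}}^{(l/d\chi(1))m^{\chi}_{\mathcal{C},0}e_{\mathcal{C}}\omega_{\mathcal{C}}}$, which is statement~(2); statement~(1) is then the special case $\bm w=\bm\pi$, i.e.\ $d=l=1$, $w=1$, where $\chi(w)=\chi(1)$.

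The step needing a little care --- the main, if modest, obstacle --- is the well-definedness of applying $\theta_{\bm x}$ to the fractional-power monomial $z_{\chi_{\bm v}}(\bm\pi)^{l/d}$, which a priori lives only in the extension $K[\bm u^{1/|W|}]$ and so carries a choice of $N_W$-th and $d$-th roots. I would resolve this by noting that Prop.~\ref{Prop: character values on roots of full twist} already certifies the full expression $\chi_{\bm v}(T_{\bm w})$ to be a genuine character value, lying in the splitting field (indeed in $K[\bm v]$), so $\theta_{\bm x}$ is being applied to an honest element; the residual branch ambiguity is pinned down by the compatibility requirement that a further specialization $x_{\mathcal{C}}\to 1$ must recover the group-character value $\chi(w)$, which is precisely what the cancellation above delivers.
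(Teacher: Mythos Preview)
Your proposal is correct and follows exactly the route the paper indicates: the paper's entire ``proof'' is the single clause ``By applying the specialization $\theta_{\bm x}$ from \eqref{EQ: admissible specializations} on the previous proposition, we easily get,'' and you have simply unpacked that application---splitting $z_{\chi_{\bm v}}(\bm\pi)^{l/d}$ into its $j=0$ and $j\geq 1$ parts and observing the cancellation with the $e^{-2\pi i lN(\chi)/d\chi(1)}$ prefactor (the latter computation being identical to the one at the end of the proof of Prop.~\ref{Prop: character values on roots of full twist}, with $\theta_{\bm x}$ in place of $\sigma$). Your remark on the branch ambiguity is a reasonable extra caution that the paper leaves implicit.
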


\subsection{Local Coxeter numbers}

We are now going to define a local version of Coxeter numbers (see Defn.~\ref{Defn: Coxeter numbers}) and study how they are precisely related to the exponents that appear in the character calculation of the previous Corol.~\ref{Corol: character calculation for H_x(W)}.

\begin{definition}\label{Defn: Local Coxeter numbers} 
We define the {\it local} Coxeter number $c_{\chi_{,\mathcal{C}}}$ associated to the character $\chi$ and the hyperplane orbit $\mathcal{C}\in\mathcal{A}/W$, as the normalized trace $$c_{\chi_{,\mathcal{C}}}:=\dfrac{1}{\chi(1)}\cdot\chi\Big( \sum_{V^{t}\in\mathcal{C}}(\bm 1-t)\Big).$$ Here, the sum is taken over all reflections $t$ whose fixed hyperplane $H=V^{t}$ belongs to the orbit $\mathcal{C}$. Notice that these numbers are a refinement of the Coxeter numbers in the sense that $c_{\chi}=\sum c_{\chi_{,\mathcal{C}}}$
\end{definition}

\begin{proposition}\label{Prop: local cox num formula}
The local Coxeter numbers satisfy $$c_{\chi_{,\mathcal{C}}}=e_{\mathcal{C}}\cdot\omega_{\mathcal{C}}\cdot\Big(1-\dfrac{m^{\chi}_{\mathcal{C},0}}{\chi(1)}\Big).$$
\end{proposition}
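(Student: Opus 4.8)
The plan is to compute the local Coxeter number $c_{\chi_{,\mathcal{C}}}$ directly from its definition by expanding the normalized trace and extracting the eigenvalue contributions of the distinguished reflections, exactly in the spirit of the proof of Prop.~\ref{Prop: bounds for chi(R)}. First I would recall that by the partition \eqref{EQ: Reflection set partition into cyclis}, the hyperplanes $H$ in the orbit $\mathcal{C}$ number $\omega_{\mathcal{C}}=|\mathcal{C}|$, and for each such $H$ the reflections fixing $H$ are the powers $t_H, t_H^2, \ldots, t_H^{e_{\mathcal{C}}-1}$ of a generator of the cyclic stabilizer $W_H$. Thus the central element $\sum_{V^t\in\mathcal{C}}(\bm 1 - t)$ decomposes as a sum over the $\omega_{\mathcal{C}}$ hyperplanes of the local sums $\sum_{k=1}^{e_{\mathcal{C}}-1}(\bm 1 - t_H^k)$.

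The key step is to evaluate $\chi$ on a single local sum $\sum_{k=1}^{e_{\mathcal{C}}-1}(\bm 1 - t_H^k)$. Fixing $H\in\mathcal{C}$, the reflection $t_H$ acts on the representation $U_\chi$ with eigenvalues among the $e_{\mathcal{C}}$-th roots of unity; writing the eigenvalues as $\zeta_{e_{\mathcal{C}}}^{a}$ with multiplicities, the trace of $\bm 1 - t_H^k$ is $\chi(1) - \sum_a \zeta_{e_{\mathcal{C}}}^{ak}$. Summing over $k=0,\ldots,e_{\mathcal{C}}-1$ (and subtracting the $k=0$ term, which is $0$) and using $\sum_{k=0}^{e_{\mathcal{C}}-1}\zeta_{e_{\mathcal{C}}}^{ak}$ equals $e_{\mathcal{C}}$ when $\zeta_{e_{\mathcal{C}}}^{a}=1$ and $0$ otherwise, I find that the eigenvalues equal to $1$ contribute $0$ to the full local sum while each eigenvalue $\zeta_{e_{\mathcal{C}}}^{a}\neq 1$ contributes $e_{\mathcal{C}}$. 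Here is where I would invoke the identification coming from Tits' deformation theorem (the discussion around \eqref{Eq: sum of m_H,j's=chi(1)}): the multiplicity of the eigenvalue $\zeta_{e_{\mathcal{C}}}^{j}=\sigma(u_{\mathcal{C},j})$ of a distinguished reflection $s_H$ is exactly $m^{\chi}_{\mathcal{C},j}$, and the eigenvalue $1$ corresponds to $j=0$. Therefore the number of eigenvalues different from $1$ is $\chi(1) - m^{\chi}_{\mathcal{C},0}$, and the local sum evaluates to $e_{\mathcal{C}}(\chi(1) - m^{\chi}_{\mathcal{C},0})$.

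Putting these together, I would sum over the $\omega_{\mathcal{C}}$ hyperplanes in the orbit (all of which give the same value since their stabilizers are conjugate, so the eigenvalue multiplicities agree) to obtain $\chi\big(\sum_{V^t\in\mathcal{C}}(\bm 1 - t)\big) = \omega_{\mathcal{C}}\cdot e_{\mathcal{C}}\cdot(\chi(1) - m^{\chi}_{\mathcal{C},0})$. Dividing by $\chi(1)$ then gives the claimed formula $c_{\chi_{,\mathcal{C}}} = e_{\mathcal{C}}\cdot\omega_{\mathcal{C}}\cdot\big(1 - m^{\chi}_{\mathcal{C},0}/\chi(1)\big)$.

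The one subtlety I would be careful about is the distinction between a general reflection fixing $H$ and a \emph{distinguished} reflection: the numbers $m^{\chi}_{\mathcal{C},j}$ are defined via the distinguished reflections $s_H$ appearing as images of braid reflections, whereas the trace computation sums over all reflections $t_H^k$. I expect this to be the main (though mild) obstacle. The resolution is that the eigenvalue-$1$ multiplicity of any power $t_H^k$ with $(k,e_{\mathcal{C}})=1$ equals that of the distinguished generator, and more to the point my computation only needs the total count of eigenvalues equal to $1$ versus not equal to $1$ across the whole cyclic group $W_H$; the vanishing of the $\zeta^{a}=1$ contributions and the clean $\sum_k \zeta^{ak}=0$ cancellation for $\zeta^a\neq 1$ means I never need to track individual powers, only the fixed-space dimension of $W_H$ acting on $U_\chi$, which is precisely $m^{\chi}_{\mathcal{C},0}$. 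This makes the argument robust to the choice of generator and completes the proof.
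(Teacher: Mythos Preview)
Your proof is correct and follows essentially the same approach as the paper: both compute the trace of $\sum_{V^t\in\mathcal{C}}(\bm 1-t)$ by decomposing over the $\omega_{\mathcal{C}}$ hyperplanes, expanding in eigenvalues of a generator $t_H$, and using the root-of-unity identity for $\sum_k \zeta_{e_{\mathcal{C}}}^{jk}$ to isolate the $j=0$ multiplicity $m^{\chi}_{\mathcal{C},0}$. Your device of including the $k=0$ term (which vanishes) so that the geometric sum collapses to $0$ or $e_{\mathcal{C}}$ is a cosmetic variant of the paper's computation with $k=1,\dots,e_{\mathcal{C}}-1$; your closing remark about distinguished versus arbitrary generators is correct but, as you note, unnecessary for the argument.
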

\begin{proof}
As we saw in \eqref{EQ: Reflection set partition into cyclis}, because the parabolic groups for hyperplanes are cyclic, the set of reflections can be partitioned into sets of the form $\{t_H,\cdots,t^{e_H-1}_H\}$. Moreover, recalling the definition of $m^{\chi}_{\mathcal{C},j}$ from the beginning of this section, we see that the spectrum of $t_H^k$ (for $H\in\mathcal{C}$) is given by $$\op{Spec}_{\chi}(t_H^k)=\{ \zeta_{e_{\mathcal{C}}}^{jk}\ (m^{\chi}_{\mathcal{C},j}\text{-many times})\ |\ 0\leq j\leq e_{\mathcal{C}}-1\}.$$ We can then pick an $H\in\mathcal{C}$ and a generator $t_H$ of $W_H$, and start the evaluation by computing \begin{align*}
\sum_{V^t\in\mathcal{C}}\chi(\bm 1-t)&=\chi(1)(e_{\mathcal{C}}-1)\omega_{\mathcal{C}}-\omega_{\mathcal{C}}\sum_{k=1}^{e_{\mathcal{C}}-1}\chi(t_H^k)\\
&=\chi(1)(e_{\mathcal{C}}-1)\omega_{\mathcal{C}}-\omega_{\mathcal{C}}\sum_{k=1}^{e_{\mathcal{C}}-1}\sum_{j=0}^{e_{\mathcal{C}}-1}m^{\chi}_{\mathcal{C},j}\zeta_{e_{\mathcal{C}}}^{jk}.
\end{align*}
Now, notice that the sum $\sum_{k=1}^{e_{\mathcal{C}}-1}\zeta_{e_{\mathcal{C}}}^{jk}$ equals $e_{\mathcal{C}}-1$ or $-1$ depending on whether $j=0$ or not. So, after changing the order of summation, we have \begin{align*}
\sum_{V^t\in\mathcal{C}}\chi(\bm 1-t)&=\chi(1)(e_{\mathcal{C}}-1)\omega_{\mathcal{C}}+\sum_{j=1}^{e_{\mathcal{C}}-1}\omega_{\mathcal{C}}m^{\chi}_{\mathcal{C},j}-(e_{\mathcal{C}}-1)\omega_{\mathcal{C}}m^{\chi}_{\mathcal{C},0}\\
&=\chi(1)e_{\mathcal{C}}\omega_{\mathcal{C}}-e_{\mathcal{C}}\omega_{\mathcal{C}}m^{\chi}_{\mathcal{C},0},
\end{align*}where the second equation is because of \eqref{Eq: sum of m_H,j's=chi(1)}. This completes the proof.
\end{proof}

We can now rewrite the character calculation from Corol.~\ref{Corol: character calculation for H_x(W)} replacing the quantities in the exponents with equivalent ones in terms of the Coxeter numbers $c_{\chi_{,\mathcal{C}}}$ (and via Prop.~\ref{Prop: local cox num formula}). With the notation being the same as in the statement of the Corollary, we have: \begin{equation}
\chi_{\bm y}(T_{\bm \pi})=\chi(1)\prod_{\mathcal{C}\in\mathcal{A}/W}x_{\mathcal{C}}^{e_{\mathcal{C}}\omega_{\mathcal{C}}-c_{\chi_{,\mathcal{C}}}}\quad\text{ and }\quad\chi_{\bm y}(T_{\bm w})=\chi(w)\prod_{\mathcal{C}\in\mathcal{A}/W}x_{\mathcal{C}}^{(e_{\mathcal{C}}\omega_{\mathcal{C}}-c_{\chi_{,\mathcal{C}}})l/d}.\label{EQ: character values via local Coxeter}\end{equation}

Moreover, after the further specialization $x_{\mathcal{C}}\rightarrow x$ of $\theta_x$ from \eqref{EQ: admissible specializations} and for the characters $\chi_y$ of $\mathcal{H}_x(W)$ as in \eqref{EQ: admissible specializations and Tits factoring}, we have (recalling that $\sum e_{\mathcal{C}}\omega_{\mathcal{C}}=|\mathcal{R}|+|\mathcal{A}|$ and that $\sum c_{\chi_{,\mathcal{C}}}=c_{\chi}$):
\begin{equation}
\chi_y(T_{\bm \pi})=\chi(1)\cdot x^{|\mathcal{R}|+|\mathcal{A}|-c_{\chi}}\quad\text{ and }\quad\chi_y(T_{\bm w})=\chi(w)\cdot x^{(|\mathcal{R}|+|\mathcal{A}|-c_{\chi})l/d}.\label{EQ: character values via Coxeter-not local}\end{equation}

\begin{remark}\label{Remark: cox numbers and N(x)+N(x*)}
This last equation is precisely what appears in \cite[Prop.~4.18]{broue-michel-french-paper} but with an equivalent expression for the Coxeter numbers: $$c_{\chi}=\dfrac{N(\chi)+N(\chi^*)}{\chi(1)},$$ where the numbers $N(\chi)$ are given in Defn.~\ref{Def. Pre character value stuff} (see also Rem.~\ref{Remark: N(x) via coexponents}). This expression also appears in \cite[Lemma~1]{michel-deligne-lusztig-derivation} but the statement of that Lemma might be misleading as it holds regardless of the values $e_{\mathcal{C}}$. For completion, we include the calculation: $$\chi(1)c_{\chi}=\chi(1)\sum_{\mathcal{C}\in\mathcal{A}/W}c_{\chi_{,\mathcal{C}}}=\sum_{\mathcal{C}\in\mathcal{A}/W}\omega_{\mathcal{C}}\sum_{j=1}^{e_{\mathcal{C}}-1}e_{\mathcal{C}}m^{\chi}_{\mathcal{C},j}=N(\chi)+N(\chi^*).$$
In fact, Michel later on \cite[Rem.~2]{michel-deligne-lusztig-derivation} notes that for all groups $W$ one has (see Defn.~\ref{Defn: Malle's character permutations}) $$c_{\chi}=\dfrac{N(\chi)+N(\Psi(\chi^*))}{\chi(1)},$$ which is equivalent to the first statement as $N(\Psi(\chi))=N(\chi)$ after Prop.~\ref{Prop: properties of Psi for Cox numbers}.
\end{remark}

For the proof of the integer property in the following corollary, we again follow \cite{broue-michel-french-paper} closely and reproduce the argument here for completion.

\begin{corollary}\cite[Corol.~4.17]{broue-michel-french-paper}\label{Corol: local cox num are integers andbounds}
The Coxeter numbers $c_{\chi_{,\mathcal{C}}}$ are integers and they satisfy $$ 0\leq c_{\chi_{,\mathcal{C}}}\leq e_{\mathcal{C}}\cdot\omega_{\mathcal{C}}.$$
\end{corollary}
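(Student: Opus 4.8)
The plan is to separate the claim into three independent pieces --- the two inequalities, the rationality of $c_{\chi_{,\mathcal{C}}}$, and its algebraic integrality --- of which only the last carries any content, since a rational algebraic integer is an integer.

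First I would read off the bounds directly from Prop.~\ref{Prop: local cox num formula}. Since $m^{\chi}_{\mathcal{C},0}$ is a multiplicity and \eqref{Eq: sum of m_H,j's=chi(1)} gives $\sum_{j} m^{\chi}_{\mathcal{C},j}=\chi(1)$ with all summands non-negative, we have $0\le m^{\chi}_{\mathcal{C},0}\le\chi(1)$, hence $0\le 1-m^{\chi}_{\mathcal{C},0}/\chi(1)\le 1$. Multiplying by $e_{\mathcal{C}}\omega_{\mathcal{C}}>0$ yields $0\le c_{\chi_{,\mathcal{C}}}\le e_{\mathcal{C}}\omega_{\mathcal{C}}$. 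The same formula settles rationality for free: $c_{\chi_{,\mathcal{C}}}=e_{\mathcal{C}}\omega_{\mathcal{C}}\bigl(1-m^{\chi}_{\mathcal{C},0}/\chi(1)\bigr)$ is a ratio of integers, so $c_{\chi_{,\mathcal{C}}}\in\QQ$.

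It then remains to show $c_{\chi_{,\mathcal{C}}}$ is an algebraic integer, and here I would argue inside the group ring $\ZZ[W]$ rather than the Hecke algebra. The element $z_{\mathcal{C}}:=\sum_{V^{t}\in\mathcal{C}}(\mathbf 1-t)$ of Defn.~\ref{Defn: Local Coxeter numbers} is central: grouping the reflections of $\mathcal{C}$ by the value $\zeta_{e_{\mathcal{C}}}^{k}$ of their determinant (recall the partition \eqref{EQ: Reflection set partition into cyclis}), each group is stable under conjugation, because conjugation permutes the hyperplanes inside the orbit $\mathcal{C}$ and preserves determinants; thus $z_{\mathcal{C}}$ is an integer combination of conjugacy-class sums. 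By Schur's lemma $z_{\mathcal{C}}$ acts on the irreducible $U_{\chi}$ by a scalar, which upon unwinding Defn.~\ref{Defn: Local Coxeter numbers} is exactly $c_{\chi_{,\mathcal{C}}}$. Since each class sum acts on an irreducible by an algebraic integer (the standard fact that $|K|\chi(g_{K})/\chi(1)$ is integral over $\ZZ$), so does their integer combination $z_{\mathcal{C}}$; hence $c_{\chi_{,\mathcal{C}}}$ is an algebraic integer, and combined with rationality it is an integer.

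The main obstacle is precisely this integrality, and it is worth recording why I would avoid the tempting Hecke-algebraic shortcut. One is tempted to read the exponent $e_{\mathcal{C}}\omega_{\mathcal{C}}-c_{\chi_{,\mathcal{C}}}$ straight off the character value $\chi_{\bm y}(T_{\bm\pi})$ in \eqref{EQ: character values via local Coxeter} and declare it an integer because $T_{\bm\pi}$ is integral over $\ZZ[\bm x,\bm x^{-1}]$ (as $\mathcal{H}_{\bm x}(W)$ is free of rank $|W|$ after BMR-freeness). But integrality over $\ZZ[\bm x,\bm x^{-1}]$ does not force integer exponents --- for instance $x^{1/2}$ is integral over $\ZZ[x,x^{-1}]$ --- so this route, passing through the splitting field $K(\bm y)$ with $y_{\mathcal{C}}^{N_W}=x_{\mathcal{C}}$, only confines $c_{\chi_{,\mathcal{C}}}$ to $\tfrac{1}{N_W}\ZZ$ and must be supplemented anyway. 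The class-sum argument sidesteps this integral-closure bookkeeping and rules out denominators cleanly.
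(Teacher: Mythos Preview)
Your argument is correct, and it takes a genuinely different route from the paper. The paper establishes integrality via the Hecke algebra: it observes that $\chi_{\bm y}(T_{\bm\pi})=\chi(1)\prod_{\mathcal{C}}x_{\mathcal{C}}^{e_{\mathcal{C}}\omega_{\mathcal{C}}-c_{\chi,\mathcal{C}}}$ is a monomial in the $y_{\mathcal{C}}$'s, and then shows this value lies in $K(\bm x)$ (not merely in the splitting field $K(\bm y)$) by a Galois argument --- any $\sigma\in\op{Gal}(K(\bm y)/K(\bm x))$ sends it to $\zeta_{\sigma}\cdot\chi_{\bm y}(T_{\bm\pi})$, but specializing via Tits' deformation theorem forces $(^{\sigma}\!\chi)(1)=\zeta_{\sigma}\chi(1)$, hence $\zeta_{\sigma}=1$. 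Rationality of the monomial then forces integer exponents.

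Your class-sum argument is more elementary: you work entirely inside $\ZZ[W]$, noting that $z_{\mathcal{C}}=\sum_{V^t\in\mathcal{C}}(\mathbf{1}-t)$ is a $\ZZ$-combination of conjugacy-class sums, hence acts on each irreducible by an algebraic integer; combined with the evident rationality from Prop.~\ref{Prop: local cox num formula}, this pins down $c_{\chi,\mathcal{C}}\in\ZZ$. This avoids the Hecke algebra, the splitting field, and BMR-freeness entirely. What the paper's route buys is that it rehearses precisely the Galois mechanism (action of $\op{Gal}(K(\bm y)/K(\bm x))$ on $\op{Irr}(\mathcal{H}_{\bm x}(W))$) that becomes the definition of Malle's $\Psi_{\mathcal{C}}$ a few pages later, so it is a warm-up for the key technical lemma rather than the shortest proof. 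Your closing remark about why the ``integral over $\ZZ[\bm x,\bm x^{-1}]$'' shortcut fails is well taken and correctly identifies the gap that the paper's Galois argument is designed to close.
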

\begin{proof}
The inequalities are immediate from Prop.~\ref{Prop: local cox num formula}, since $0\leq m^{\chi}_{\mathcal{C},0}\leq \chi(1)$. To see that the numbers $c_{\chi_{,\mathcal{C}}}$ are integers, it is enough to show that the values $\chi_{\bm y}(T_{\bm \pi})$ given in \eqref{EQ: character values via local Coxeter} belong to $K(\bm x)$ (as opposed to the splitting field $K(\bm y)$ of $\mathcal{H}_{\bm x}(W)$, see above \eqref{EQ: admissible specializations and Tits factoring}). In other words, we must show that the characters $\chi_{\bm y}$ take rational values (see Defn.~\ref{Defn: rational character}) on the full twist $T_{\bm \pi}$.

Consider any Galois automorphism $\sigma\in\op{Gal}\big(K(\bm y)/K(\bm x)\big)$ of the field extension. Then the Galois-conjugate character should satisfy $ ^{\sigma}(\chi_{\bm y})(T_{\bm \pi})=\zeta_{\sigma}\chi_{\bm y}(T_{\bm \pi})$ for some root of unity $\zeta_{\sigma}$, because as we see in \eqref{EQ: character values via local Coxeter} $\chi_{\bm y}(T_{\bm\pi})$ is a monomial in $\bm y$ (recall $y_{\mathcal{C}}^{N_W}=x_{\mathcal{C}}$). Now if we also call $(^{\sigma}\!\chi)$ the irreducible character of $W$ that corresponds to $^{\sigma}(\chi_{\bm y})$ via Tits' deformation theorem (but keep in mind that $(^{\sigma}\!\chi)$ is {\it not} necessarily a Galois conjugate of $\chi$), the previous equation implies $$(^{\sigma}\!\chi)(1)=\zeta_{\sigma}\chi(1),$$ which of course can only be true if $\zeta_{\sigma}=1$. The only way this can be true for {\it any} choice of $\sigma$ is if the character value was rational to begin with.
\end{proof}

\subsection{Malle's character permutations and the technical lemma}
\label{Section: Malle's character permutations and the technical lemma}

The fake degree $P_{\chi}(q):=\sum q^{e_i(\chi)}$ of an irreducible character $\chi\in\widehat{W}$ is a polynomial that records the exponents $e_i(\chi)$ of the character (see \cite[\S4.4]{lehrer-taylor-book}). Beynon and Lusztig \cite[Prop.~A]{beynon-lustzig-coxeter-numbers-genesis} had observed a remarkable reciprocity property for these polynomials. They satisfy $$P_{\chi}(q)=q^{c_{\chi}}P_{\iota(\chi)}(q^{-1}),$$where $c_{\chi}$ is the Coxeter number\footnote{However, Beynon and Lusztig, and later Malle, did not assign an epithet for these numbers; the mathematical godfathers were Gordon and Griffeth \cite{gordon-griffeth-catalan-numbers} who named them after Coxeter.} as given in Defn.~\ref{Defn: Coxeter numbers} and $\iota$ is a permutation of the irreducible characters that for Weyl groups is the identity apart from two characters of $E_7$ and four of $E_8$. 

Malle later on \cite[Thm.~6.5]{malle-fake-degrees} extended this reciprocity result for all complex reflection groups, defining a permutation of the characters $\Psi$ that is induced by a Galois action on the irreducible characters of the Hecke algebra (the two permutations satisfy $\iota(\chi)=\Psi(\chi^*)$). This permutation of Malle  is exactly the missing ingredient for the proof of Lemma~\ref{Lemma: only multiples of |g| contribute}; the characters $\chi$ for which $c_{\chi}$ is not a multiple of $|g|$ are grouped together by $\Psi$ and their contributions cancel.

\subsubsection*{A Galois action on the characters}

Recall (see \eqref{EQ: admissible specializations} and \eqref{EQ: admissible specializations and Tits factoring}) the specializations of the Hecke algebra $\mathcal{H}_{\bm x}(W)$ and $\mathcal{H}_x(W)$ that have coefficient fields $K(\bm x)$ and $K(x)$, and splitting fields $K(\bm y)$ and $K(y)$ respectively. Recall also that, after Prop.~\ref{Prop. Malle's charact. of splitting field L of H} the parameters satisfy $y_{\mathcal{C}}^{N_W}=x_{\mathcal{C}}$ and $y^{N_W}=x$.

\begin{definition}\label{Defn: Malle's character permutations}
We consider the permutations $\Psi_{\mathcal{C}}$ and $\Psi$ acting on the sets $\op{Irr}(\mathcal{H}_{\bm x}(W))$ and $\op{Irr}(\mathcal{H}_x(W))$ that are respectively induced by the Galois automorpshisms $\Sigma_{\mathcal{C}}$ (for $\mathcal{C}\in\mathcal{A}/W$) and $\Sigma$:\begin{align*}
\Sigma_{\mathcal{C}}&\in\op{Gal}\big(K(\bm y)/K(\bm x)\big) &\Sigma &\in\op{Gal}\big(K(y)/K(x)\big)    \\
y_{\mathcal{C}}&\rightarrow e^{2\pi i/N_W}\cdot y_{\mathcal{C}} & y &\rightarrow e^{2\pi i/N_W}\cdot y
\end{align*} 
In particular, they are defined via $\Psi_{\mathcal{C}}(\chi_{\bm y})(T_{\bm g}):=\Sigma_{\mathcal{C}}\big( \chi_{\bm y}(T_{\bm g})\big)$ and similarly for $\Psi$. By Tits' deformation theorem, they induce permutations on the set $\widehat{W}$ of irreducible characters of $W$, which we also denote by $\Psi_{\mathcal{C}}$ and $\Psi$.
\end{definition}

The permutations $\Psi_{\mathcal{C}}$ and $\Psi$ satisfy a set of properties with respect to the Coxeter numbers and other statistics of the characters $\chi\in\widehat{W}$:

\begin{proposition}\label{Prop: properties of Psi for Cox numbers}
For any character $\chi\in\widehat{W}$ and orbits $\mathcal{C},\mathcal{C}'\in\mathcal{A}/W$, the following are true:
\begin{tasks}[style=enumerate, after-item-skip=4mm](3)
\task $\displaystyle\Psi_{\mathcal{C}}(\chi)(1)=\chi(1)$\label{task1}
\task $\displaystyle m^{\Psi_{\mathcal{C}}(\chi)}_{\mathcal{C}',j}=m^{\chi}_{\mathcal{C}',j}$\label{task2}
\task $\displaystyle c_{\Psi_{\mathcal{C}}(\chi)_{,\mathcal{C}'}}=c_{\chi_{,\mathcal{C}'}}$\label{task3}
\end{tasks}
\end{proposition}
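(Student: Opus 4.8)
The plan is to reduce everything to part~\ref{task2}, and to prove part~\ref{task2} from a single observation: in the specialized algebra $\mathcal{H}_{\bm x}(W)$ every braid reflection already has all of its eigenvalues in the \emph{base} field $K(\bm x)$, which is fixed pointwise by $\Sigma_{\mathcal{C}}$. Parts~\ref{task1} and~\ref{task3} then follow formally.

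I would first record the key observation. By the deformed order relation~\eqref{EQ: deformed order relations}, a braid reflection $\bm s_{\mathcal{C}',\gamma}$ satisfies $\prod_{j=0}^{e_{\mathcal{C}'}-1}(\bm s_{\mathcal{C}',\gamma}-u_{\mathcal{C}',j})=0$, so in any representation its eigenvalues lie among the specialized values $\theta_{\bm x}(u_{\mathcal{C}',j})$. By~\eqref{EQ: admissible specializations} these are $x_{\mathcal{C}'}$ (for $j=0$) and the roots of unity $\zeta_{e_{\mathcal{C}'}}^{\,j}$ (for $j\neq 0$). Now $x_{\mathcal{C}'}\in K(\bm x)$ by definition, and each $\zeta_{e_{\mathcal{C}'}}\in K$: indeed $\zeta_{e_{\mathcal{C}'}}=\op{det}(s_H)$ for a distinguished reflection $s_H$ with $H\in\mathcal{C}'$, and the determinant character is realizable over the field of definition $K$ by the Benard--Bessis theorem. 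Hence all eigenvalues of $\bm s_{\mathcal{C}',\gamma}$ lie in $K(\bm x)$, and the characteristic polynomial of $\bm s_{\mathcal{C}',\gamma}$ in the representation affording $\chi_{\bm y}$, namely $\prod_{j}\bigl(X-\theta_{\bm x}(u_{\mathcal{C}',j})\bigr)^{m^{\chi}_{\mathcal{C}',j}}$, has all of its coefficients in $K(\bm x)$.

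For part~\ref{task2}, recall from Defn.~\ref{Defn: Malle's character permutations} that $\Psi_{\mathcal{C}}(\chi_{\bm y})=\Sigma_{\mathcal{C}}\circ\chi_{\bm y}$. Consequently the characteristic polynomial of $\bm s_{\mathcal{C}',\gamma}$ under $\Psi_{\mathcal{C}}(\chi_{\bm y})$ is obtained by applying $\Sigma_{\mathcal{C}}$ to the coefficients of the characteristic polynomial under $\chi_{\bm y}$, since those coefficients are symmetric functions of the eigenvalues and Galois conjugation of a character conjugates its spectra accordingly (cf.\ the spectrum-respecting diagram of Thm.~\ref{Thm: Tits deformation theorem}). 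Since those coefficients lie in $K(\bm x)$ and $\Sigma_{\mathcal{C}}\in\op{Gal}\bigl(K(\bm y)/K(\bm x)\bigr)$ fixes $K(\bm x)$ pointwise, the two characteristic polynomials coincide. Reading off the multiplicity of $\theta_{\bm x}(u_{\mathcal{C}',j})$ on each side gives $m^{\Psi_{\mathcal{C}}(\chi)}_{\mathcal{C}',j}=m^{\chi}_{\mathcal{C}',j}$ (recall that these multiplicities are the same whether computed for the Hecke character $\chi_{\bm y}$ or the $W$-character $\chi$).

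Parts~\ref{task1} and~\ref{task3} are then immediate. Summing part~\ref{task2} over $j$ and invoking~\eqref{Eq: sum of m_H,j's=chi(1)} yields $\Psi_{\mathcal{C}}(\chi)(1)=\sum_j m^{\Psi_{\mathcal{C}}(\chi)}_{\mathcal{C}',j}=\sum_j m^{\chi}_{\mathcal{C}',j}=\chi(1)$, which is part~\ref{task1}; and substituting parts~\ref{task1} and~\ref{task2} into the formula $c_{\chi_{,\mathcal{C}'}}=e_{\mathcal{C}'}\,\omega_{\mathcal{C}'}\bigl(1-m^{\chi}_{\mathcal{C}',0}/\chi(1)\bigr)$ of Prop.~\ref{Prop: local cox num formula} gives part~\ref{task3}. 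The only genuinely delicate point---and thus the crux---is the key observation itself: one must know that the braid-reflection eigenvalues involve only $x_{\mathcal{C}'}=y_{\mathcal{C}'}^{N_W}$ and roots of unity in $K$, never the radicals $y_{\mathcal{C}}$ themselves. This is what makes them $\Sigma_{\mathcal{C}}$-invariant, and it is precisely what fails for the full twist, whose eigenvalue $z_{\chi_{\bm v}}(\bm\pi)$ genuinely involves $y_{\mathcal{C}}$-monomials and is therefore moved by $\Sigma_{\mathcal{C}}$.
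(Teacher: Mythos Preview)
Your proof is correct and follows essentially the same line as the paper's: both argue that the eigenvalues of a braid reflection lie in the base field $K(\bm x)$ (via the deformed order relations~\eqref{EQ: deformed order relations}) and are therefore fixed by $\Sigma_{\mathcal{C}}$, yielding part~\ref{task2}, from which part~\ref{task3} follows via Prop.~\ref{Prop: local cox num formula}. The only organizational difference is that the paper obtains part~\ref{task1} directly---a Galois automorphism preserves the degree of an irreducible character---whereas you deduce it from part~\ref{task2} by summing multiplicities via~\eqref{Eq: sum of m_H,j's=chi(1)}.
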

\begin{proof}
Since $\Psi_{\mathcal{C}}$ is induced by a Galois automorphism, it has to respect the degree of the character $\chi_{\bm y}$, hence also of $\chi$; this proves part~\ref{task1} The spectrum of any braid reflection $\bm s_{\mathcal{C}',\gamma}$ is generically rational (see Defn.~\ref{Defn: rational character}) by the defining relations \eqref{EQ: deformed order relations}. This means that the eigenvalues of any $\bm s_{\mathcal{C}',\gamma}$ in the representation that affords $\chi_{\bm y}$ live in the coefficient field $K(\bm x)$ and are therefore fixed by $\Psi_{\mathcal{C}}$. This proves part~\ref{task2} after recalling the definition of $m^{\chi}_{\mathcal{C},j}$ from the start of \S\ref{Section: Character values on roots of the full twist} and also part~\ref{task3} after Prop.~\ref{Prop: local cox num formula}. The same results are of course true for $\Psi$.
\end{proof}

The following is the key technical lemma that we have been building towards through all of Section~\ref{Section: Hecke algebras and the technical lemma}. The character calculations of Prop.~\ref{Prop: character values on roots of full twist} were included just so that the argument presented here is self-contained.

\begin{proposition}[The key technical lemma]\label{Prop: The key technical lemma}\ \newline
Let $g$ be a $\zeta$-regular element of $W$, with $\zeta=e^{2\pi il/d}$ of order $d$, $\chi\in\widehat{W}$ an irreducible character, and $\mathcal{C}\in\mathcal{A}/W$ an orbit of hyperplanes. Then, we have $$\Psi_{\mathcal{C}}(\chi)(g)=\op{exp}\big(-2\pi i \cdot \frac{lc_{\chi_{,\mathcal{C}}}}{d}\big) \cdot \chi(g) \quad\text{and}\quad\Psi(\chi)(g)=\op{exp}\big(-2\pi i \cdot \frac{lc_{\chi}}{d}\big) \cdot \chi(g).$$
\end{proposition}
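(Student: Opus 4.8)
The plan is to compute $\Psi_{\mathcal{C}}(\chi)(g)$ by realizing $g$ as the image of a root of the full twist and then tracking how the defining Galois automorphism $\Sigma_{\mathcal{C}}$ scales the character value of that root. First I would use Prop.~\ref{Prop: regular elements lift to roots of the full twist} to fix a lift $\bm g\in B(W)$ of $g$ with $\bm g^d=\bm\pi^l$; this is a $d$-th root of $\bm\pi^l$, so the calculations of Corol.~\ref{Corol: character calculation for H_x(W)} apply verbatim with $\bm w=\bm g$ and $w=g$. By the definition of $\Psi_{\mathcal{C}}$ (Defn.~\ref{Defn: Malle's character permutations}) together with the fact that Tits' deformation theorem (Thm.~\ref{Thm: Tits deformation theorem}) identifies $\Psi_{\mathcal{C}}(\chi_{\bm y})$ with $\Psi_{\mathcal{C}}(\chi)$ while respecting character values under the specialization $\sigma\colon y_{\mathcal{C}'}\to 1$ (cf.\ \eqref{EQ: characters are respected by tits def.}), the quantity we seek is $$\Psi_{\mathcal{C}}(\chi)(g)=\sigma\big(\Sigma_{\mathcal{C}}(\chi_{\bm y}(T_{\bm g}))\big).$$

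The decisive input is the shape of $\chi_{\bm y}(T_{\bm g})$. By the rewriting \eqref{EQ: character values via local Coxeter} of Corol.~\ref{Corol: character calculation for H_x(W)}, this value is $\chi(g)$ times the single monomial $\prod_{\mathcal{C}'}x_{\mathcal{C}'}^{(e_{\mathcal{C}'}\omega_{\mathcal{C}'}-c_{\chi_{,\mathcal{C}'}})l/d}$; since $x_{\mathcal{C}'}=y_{\mathcal{C}'}^{N_W}$, this is a genuine monomial in the splitting-field parameters $y_{\mathcal{C}'}$ with integer exponents. Now $\Sigma_{\mathcal{C}}$ fixes $K$ (hence the scalar $\chi(g)$) and every $y_{\mathcal{C}'}$ with $\mathcal{C}'\neq\mathcal{C}$, while sending $y_{\mathcal{C}}\mapsto e^{2\pi i/N_W}y_{\mathcal{C}}$; applying it therefore only rescales the $y_{\mathcal{C}}$-factor and multiplies the value by the root of unity $$\exp\Big(\frac{2\pi i}{N_W}\cdot N_W\,\frac{(e_{\mathcal{C}}\omega_{\mathcal{C}}-c_{\chi_{,\mathcal{C}}})l}{d}\Big)=\exp\Big(2\pi i\,\frac{(e_{\mathcal{C}}\omega_{\mathcal{C}}-c_{\chi_{,\mathcal{C}}})l}{d}\Big).$$ This prefactor is a constant, so it is untouched by $\sigma$, whereas $\sigma$ sends the remaining monomial to $1$ and recovers $\sigma(\chi_{\bm y}(T_{\bm g}))=\chi(g)$. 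Hence $\Psi_{\mathcal{C}}(\chi)(g)=\exp\big(2\pi i\,l(e_{\mathcal{C}}\omega_{\mathcal{C}}-c_{\chi_{,\mathcal{C}}})/d\big)\cdot\chi(g)$.

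It then remains to discard the integer part of the exponent: by Corol.~\ref{Corol: regular numbers divide e_co_c} the regular number $d$ divides $e_{\mathcal{C}}\omega_{\mathcal{C}}$, so $e^{2\pi i\,l e_{\mathcal{C}}\omega_{\mathcal{C}}/d}=1$ and the first identity drops out. For the second statement I would run the identical argument with the one-variable specialization $\theta_x$, its automorphism $\Sigma$, and formula \eqref{EQ: character values via Coxeter-not local}, where the single exponent is $(|\mathcal{R}|+|\mathcal{A}|-c_{\chi})l/d$; summing the divisibility $d\mid e_{\mathcal{C}}\omega_{\mathcal{C}}$ over all orbits (so that $d\mid\sum_{\mathcal{C}}e_{\mathcal{C}}\omega_{\mathcal{C}}=|\mathcal{R}|+|\mathcal{A}|$, and using $\sum_{\mathcal{C}}c_{\chi_{,\mathcal{C}}}=c_{\chi}$) again kills the integer part. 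The step I expect to demand the most care is the very first reduction: checking that $\chi_{\bm y}(T_{\bm g})$ really is $\chi(g)$ times a \emph{pure} monomial in the $y_{\mathcal{C}'}$, so that a Galois automorphism acts on it simply by a scalar, and that the Galois action and the specialization $\sigma$ commute as used. This monomial structure is precisely what the centrality of the full twist (Corol.~\ref{Corol. full twist is central in B(W)}) provides through Prop.~\ref{Prop: character values on roots of full twist}, and it is the one place where the regularity of $g$ --- not merely numerical bookkeeping --- is essential.
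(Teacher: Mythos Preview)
Your proposal is correct and follows essentially the same route as the paper: lift $g$ to a $d$-th root of $\bm\pi^l$ via Prop.~\ref{Prop: regular elements lift to roots of the full twist}, plug into the monomial expression \eqref{EQ: character values via local Coxeter} rewritten in the $y_{\mathcal{C}}$'s, apply $\Sigma_{\mathcal{C}}$ to pick up the scalar $e^{2\pi i(e_{\mathcal{C}}\omega_{\mathcal{C}}-c_{\chi,\mathcal{C}})l/d}$, specialize, and then kill the $e_{\mathcal{C}}\omega_{\mathcal{C}}$ part using Corol.~\ref{Corol: regular numbers divide e_co_c}. The only cosmetic difference is that the paper phrases the key step as the identity $\Psi_{\mathcal{C}}(\chi_{\bm y})(T_{\bm g})=e^{2\pi i(e_{\mathcal{C}}\omega_{\mathcal{C}}-c_{\chi,\mathcal{C}})l/d}\,\chi_{\bm y}(T_{\bm g})$ at the Hecke level and then specializes both sides, whereas you compute $\sigma\big(\Sigma_{\mathcal{C}}(\chi_{\bm y}(T_{\bm g}))\big)$ directly; these are the same computation.
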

\begin{proof}
By Prop.~\ref{Prop: regular elements lift to roots of the full twist}, we can lift  $g$ to some element $\bm g\in B(W)$ that is commensurable with the full twist (i.e. it satisfies $\bm g^d=\bm \pi^l$ with $(l,d)=1$). Now, replacing $x_\mathcal{C}$ with $y_{\mathcal{C}}^{N_W}$ we can rewrite the character evalueations from \eqref{EQ: character values via local Coxeter} as $$\chi_{\bm y}(T_{\bm g})=\chi(g)\cdot\prod_{\mathcal{C}'\in\mathcal{A}/W}y_{\mathcal{C}'}^{N_W(e_{\mathcal{C}'}\omega_{\mathcal{C}'}-c_{\chi,_{\mathcal{C}'}})l/d}, $$
which, after applying the Galois automorphism $\Sigma_{\mathcal{C}}$, becomes
$$ \Psi_{\mathcal{C}}(\chi_{\bm y})(T_{\bm g})= \chi(g)\cdot e^{2\pi i(e_{\mathcal{C}}\omega_{\mathcal{C}}-c_{\chi_{,\mathcal{C}}})l/d }\cdot \prod_{\mathcal{C}'\in\mathcal{A}/W}y_{\mathcal{C}'}^{N_W(e_{\mathcal{C}'}\omega_{\mathcal{C}'}-c_{\chi,_{\mathcal{C}'}})l/d}. $$ 
Now, this is really 
$$ \Psi_{\mathcal{C}}(\chi_{\bm y})(T_{\bm g})=e^{2\pi i(e_{\mathcal{C}}\omega_{\mathcal{C}}-c_{\chi_{,\mathcal{C}}})l/d }\cdot\chi_{\bm y}(T_{\bm g}),$$ which completes the proof after applying Tits' deformation theorem and recalling that $e_{\mathcal{C}}\omega_{\mathcal{C}}$ is a multiple of $d$ by Corol.~\ref{Corol: regular numbers divide e_co_c}. The same argument of course works for $\Psi$.
\end{proof}

We are now ready to prove Lemma~\ref{Lemma: only multiples of |g| contribute}. Only Malle's permutation $\Psi$ is sufficient for that, while the ``local" version $\Psi_{\mathcal{C}}$ will be used in Section~\ref{Section: The weighted enumeration} to deduce similar results for generating functions of  weighted reflection factorizations. 

\begin{proof}[Proof of Lemma~\ref{Lemma: only multiples of |g| contribute}]
We consider the partition of the set of irreducible characters $\chi\in\widehat{W}$ into orbits under the action of $\Psi$. We will show that the total contribution of the characters in any orbit that is not a singleton is $0$. 

Consider a character $\chi$ in such an orbit and let $k$ be the smallest number such that $\Psi^k(\chi)=\chi$. Notice, that after Prop.~\ref{Prop: The key technical lemma} we must have $k=\frac{d}{\op{gcd}(c_{\chi},d)}$. Since by Prop.~\ref{Prop: properties of Psi for Cox numbers} the degrees $\chi(1)$ as well as the Coxeter numbers $c_{\chi}$ are not affected by $\Psi$, it is sufficient to show that $$\sum_{j=1}^k\Psi^j(\chi)(g^{-1})=0.$$ But if $\xi=\op{exp}(-2\pi ilc_{\chi_{,\mathcal{C}}}/d)$, we have by Prop.~\ref{Prop: The key technical lemma} that $\Psi^j(\chi)(g^{-1})=\xi^j\chi(g^{-1})$ after which the above is immediate (indeed, $\xi$ is also a $k^{\text{th}}$ root of unity).
\end{proof}

\begin{remark}
Notice that Prop.~\ref{Prop: The key technical lemma} gives some insight on why in Weyl groups the orbits under $\Psi$ can have at most two elements. Indeed, every regular element $g$ will come with (at least) a pair of regular eigenvalues $e^{\pm2\pi il/d}$. Since we can lift $g$ to $d^{\text{th}}$ roots of either powers $\bm\pi^l$ and $\bm\pi^{d-l}$, the only way the proposition is valid for both lifts is if $\op{gcd}(c_{\chi_{,\mathcal{C}}},d)\leq 2$ or $\chi(g)=0$.

More generally, for a given $\chi$ and $\mathcal{C}$, Prop.~\ref{Prop: The key technical lemma} implies that if $\chi(g)\neq 0$, then $l\cdot c_{\chi_{,\mathcal{C}}}(\op{mod} d)$ is constant for all $l$ such that $\zeta=e^{2\pi i l/d}$ is a regular eigenvalue of $g$.
\end{remark}

\subsection{On the uniformity of the proofs}
\label{Section: On the uniformity of the proofs}

Our proofs rely so far mainly on two properties that are known in a case-by-case fashion; the BMR-freeness theorem and the structure of the splitting fields for the Hecke algebras. Both of those are known uniformly for real reflection groups (\cite[Thm.~4.4.6]{geck-pfeiffer-book} and \cite[Thm.~5]{opdam-remark-on-irr-characters}).

In fact, we could do away with the second reliance. Opdam's work \cite[Thm.~6.7]{opdam-fake-degrees-arxiv} is sufficient information for the structure of the group $\op{Gal}\big(\CC(\bm v)/\CC(\bm u)\big)$ which in turn is all we need to define the permutations $\Psi_{\mathcal{C}}\in\op{Perm}\big( \op{Irr}(W)\big)$. In fact Opdam's elements $g_{\mathcal{C},0}$ of this Galois group correspond precisely to our $\Sigma_{\mathcal{C}}$ of Defn.~\ref{Defn: Malle's character permutations} (see [ibid,~Prop.~7.1] and the discussion before [ibid,~Prop.~7.4]). We have chosen not to follow Opdam's presentation here (which involves the KZ-connection, a much more complicated beast) even if it is more uniform, as it does not eventually illuminate Prop.~\ref{Prop: The key technical lemma} much better. 

As far as the BMR-freeness theorem goes, and again because we are really interested in the ``geometric" Galois group $\op{Gal}\big(\CC(\bm v)/\CC(\bm u)\big)$, it is possible that we could replace it by Losev's weaker but uniform theorem \cite{losev-finite-dim-quotients}. We hope to be able to clarify this in the future.

\section{The weighted enumeration}
\label{Section: The weighted enumeration}

The following section studies the weighted enumeration of reflection factorizations as considered in \cite{reiner-delMas-Hameister}, where each reflection $t\in\mathcal{R}$ is weighted by the orbit $\mathcal{C}\in\mathcal{A}/W$ of its fixed hyperplane $V^t$. It provides a uniform proof of their result and extends it in a similar direction as with the Chapuy-Stump formula \eqref{EQ: Chapuy-Stump formula}. Again we assume that $W$ is irreducible (but see \S\ref{Section: When W is reducible}).

\begin{definition}\label{Defn: weight function}
Consider a set of variables $\bm w:=(w_{\mathcal{C}})_{(\mathcal{C}\in\mathcal{A}/W)}$ and a weight function $$\op{wt}:\mathcal{R}\rightarrow\{ w_{\mathcal{C}}\ |\ \mathcal{C}\in\mathcal{A}/W\},$$ such that $\op{wt}(t)=w_{\mathcal{C}}$ if $\mathcal{C}$ is the orbit that contains the fixed hyperplane $V^t$. Then, the weighted enumeration of reflection factorizations of some element $g\in W$ is encoded via the following generating function:
$$\op{FAC}_{W,g}(\bm w,z):=\sum_{\substack{(t_1,\cdots,t_N)\in\mathcal{R}^N \\t_1\cdots t_N=g}}\op{wt}(t_1)\cdots\op{wt}(t_N)\cdot \dfrac{z^N}{N!}. $$
\end{definition}

Because the sets $\mathcal{C}^{\op{ref}}:=\{t\in\mathcal{R}\ |\ V^t\in\mathcal{C}\}$ are closed under conjugation, the Lemma of Frobenius can again be used to express $\op{FAC}_{W,g}(\bm w,z)$ as a finite sum of exponentials. Notice first, that the order of the subsets $A_i$ in Thm.~\ref{Thm: Frobenius lemma} does not affect the enumeration as the different sets of factorizations have the same size. Indeed, one can easily construct a bijective map by considering a sequence of {\it Hurwitz moves}:$$
(t_1,t_2,\cdots,t_k,t_{k+1},\cdots ,t_l)\rightarrow (t_1,t_2,\cdots,t_kt_{k+1}t_k^{-1},t_k,\cdots,t_l).$$  

Having said that, and assuming there are $r=|\mathcal{A}/W|$ different orbits of hyperplanes, denoted $\mathcal{C}_1,\cdots, \mathcal{C}_r$, Thm.~\ref{Thm: Frobenius lemma} now implies that \begin{align*}
\op{FAC}_{W,g}(\bm w,z)=&\sum_{\substack{ N\geq 0\\l_1+\cdots +l_r=N}}\binom{N}{l_1,\cdots,l_r}\times\\
&\times\dfrac{1}{|W|}\sum_{\chi\in\widehat{W}}\chi(1)\cdot \chi(g^{-1})\cdot \Big[ \dfrac{\chi(\mathcal{C}^{\op{ref}}_1)}{\chi(1)}\Big]^{l_1}\cdots \Big[ \dfrac{\chi(\mathcal{C}^{\op{ref}}_r)}{\chi(1)}\Big]^{l_r}\cdot w_{\mathcal{C}_1}^{l_1}\cdots w_{\mathcal{C}_r}^{l_r} \dfrac{z^N}{N!}.\end{align*}Using standard properties of exponential generating functions, we can rewrite the sum as$$ 
\op{FAC}_{W,g}(\bm w,z)=\dfrac{1}{|W|}\sum_{\chi\in\widehat{W}}\chi(1)\cdot\chi(g^{-1})\cdot\op{exp}\Big[ zw_{\mathcal{C}_1}^{\phantom{l_1}}\cdot \dfrac{\chi(\mathcal{C}_1^{\op{ref}})}{\chi(1)}\Big]\cdots \op{exp}\Big[ zw_{\mathcal{C}_r}^{\phantom{l_r}}\cdot \dfrac{\chi(\mathcal{C}_1^{\op{ref}})}{\chi(1)}\Big].$$
Finally, notice that by Defn.~\ref{Defn: Local Coxeter numbers} we can rewrite the quantities in the exponentials in terms of local Coxeter numbers. Indeed, we have $c_{\chi_{,\mathcal{C}}}=|\mathcal{C}^{\op{ref}}|-\chi(\mathcal{C}^{\op{ref}})/\chi(1)$ and if we define $\op{wt}(\mathcal{R}):=\sum_{t\in\mathcal{R}}\op{wt}(t)$, the previous expression becomes a direct analog of \eqref{EQ: Frobenius via Coxeter numbers} :\begin{equation}
\op{FAC}_{W,g}(\bm w,z)=\dfrac{e^{z\cdot\op{wt}(\mathcal{R})}}{|W|}\sum_{\chi\in\widehat{W}}\chi(1)\cdot\chi(g^{-1})\cdot \big(e^{-zw_{\mathcal{C}_1}^{\phantom{l_1}}}\big)^{\displaystyle c_{\chi_{,\mathcal{C}_1}}}\cdots \big( e^{-zw_{\mathcal{C}_r}^{\phantom{l_r}}}\big)^{\displaystyle c_{\chi_{,\mathcal{C}_r}}}.\label{EQ: weighted enum via local Coxeter numbers}
\end{equation}

\begin{lemma}\label{Lemma: weigted enumeration only multiples of |g| contribute}
For a complex reflection group $W$, and a \textbf{regular} element $g\in W$, the total contribution in \eqref{EQ: weighted enum via local Coxeter numbers} of those characters $\chi\in\widehat{W}$ for which \textbf{any} $c_{\chi_{,\mathcal{C}}}$ is not a multiple of $|g|$ is $0$.
\end{lemma}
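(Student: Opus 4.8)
The plan is to run exactly the argument that proves Lemma~\ref{Lemma: only multiples of |g| contribute}, but with Malle's \emph{local} permutations $\Psi_{\mathcal{C}}$ (Defn.~\ref{Defn: Malle's character permutations}) in place of the global $\Psi$. The reason the local version is now forced upon us is that in \eqref{EQ: weighted enum via local Coxeter numbers} the characters are weighted by the genuinely distinct monomials $(e^{-zw_{\mathcal{C}}})^{c_{\chi_{,\mathcal{C}}}}$, so we must cancel contributions while keeping track of each local Coxeter number $c_{\chi_{,\mathcal{C}}}$ separately, rather than only of their sum $c_{\chi}$.

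Two structural facts drive the cancellation. First, by Prop.~\ref{Prop: properties of Psi for Cox numbers}\ref{task1} and \ref{task3} every $\Psi_{\mathcal{C}}$ fixes both $\chi(1)$ and the \emph{entire} vector $(c_{\chi_{,\mathcal{C}'}})_{\mathcal{C}'\in\mathcal{A}/W}$ of local Coxeter numbers. Consequently the prefactor $\chi(1)\prod_{\mathcal{C}'}(e^{-zw_{\mathcal{C}'}})^{c_{\chi_{,\mathcal{C}'}}}$ attached to $\chi$ in \eqref{EQ: weighted enum via local Coxeter numbers} is constant along every $\Psi_{\mathcal{C}}$-orbit, and the property of being \textbf{bad} (having some $c_{\chi_{,\mathcal{C}'}}$ not divisible by $d:=|g|$) is itself an orbit invariant; so the bad characters form a union of orbits of each $\Psi_{\mathcal{C}}$. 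Second, writing $\zeta=e^{2\pi il/d}$ with $(l,d)=1$, I would lift $g$ to a root of the full twist and apply Prop.~\ref{Prop: The key technical lemma} to the (also regular) element $g^{-1}$, obtaining $\Psi_{\mathcal{C}}^{\,j}(\chi)(g^{-1})=\xi_{\mathcal{C}}^{\,j}\,\chi(g^{-1})$ with $\xi_{\mathcal{C}}=\exp(2\pi i\,l\,c_{\chi_{,\mathcal{C}}}/d)$; since $c_{\chi_{,\mathcal{C}}}\in\ZZ$ by Corol.~\ref{Corol: local cox num are integers andbounds} and $(l,d)=1$, the root of unity $\xi_{\mathcal{C}}$ is nontrivial precisely when $c_{\chi_{,\mathcal{C}}}$ is not a multiple of $d$.

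The one new wrinkle, absent in the global case, is that a single character may be bad on account of several orbits at once, so I must choose, for each bad $\chi$, a single orbit $\mathcal{C}$ along which to cancel it. I would do this with a least-index device: fix an ordering $\mathcal{C}_1,\dots,\mathcal{C}_r$ of $\mathcal{A}/W$ and assign to each bad $\chi$ the smallest index $i(\chi)$ with $c_{\chi_{,\mathcal{C}_{i(\chi)}}}\not\equiv 0\pmod d$. Because \emph{every} $\Psi_{\mathcal{C}}$ fixes all the local Coxeter numbers, the function $i(\cdot)$ is constant on orbits, so the set of bad characters decomposes as a disjoint union $\bigsqcup_i S_i$ of $\Psi_{\mathcal{C}_i}$-invariant pieces $S_i:=\{\chi\ \text{bad}:\ i(\chi)=i\}$. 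On each $\Psi_{\mathcal{C}_i}$-orbit inside $S_i$ the prefactor is constant while the character values sum, by the previous paragraph, to $\chi(g^{-1})\sum_{j}\xi_{\mathcal{C}_i}^{\,j}$; this vanishes because $\xi_{\mathcal{C}_i}\neq 1$ and $\xi_{\mathcal{C}_i}^{\,k}=1$ for the orbit length $k$ (the geometric series collapses, and the case $\chi(g^{-1})=0$ is trivial). Summing these vanishing orbit contributions over all orbits in each $S_i$ and then over $i$ yields the total contribution $0$. The only step requiring care is this bookkeeping of overlapping badness, and it is resolved entirely by the strong invariance in Prop.~\ref{Prop: properties of Psi for Cox numbers}\ref{task3}; the rest is a transcription of the proof of Lemma~\ref{Lemma: only multiples of |g| contribute}.
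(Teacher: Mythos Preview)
Your proof is correct and follows essentially the same approach as the paper's. The paper also fixes an ordering $\mathcal{C}_1,\dots,\mathcal{C}_r$ and proceeds sequentially, first cancelling via $\Psi_{\mathcal{C}_1}$ all characters bad at $\mathcal{C}_1$, then among the remaining ones cancelling via $\Psi_{\mathcal{C}_2}$ those bad at $\mathcal{C}_2$, and so on; your least-index function $i(\chi)$ and the partition $\bigsqcup_i S_i$ are just an explicit repackaging of that same sequential bookkeeping, and your handling of the case $\chi(g^{-1})=0$ is if anything slightly more careful than the paper's.
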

\begin{proof}
The proof is essentially the same as for Lemma~\ref{Lemma: only multiples of |g| contribute}. However, we first need to order the orbits $\mathcal{C}\in\mathcal{A}/W$ (arbitrarily) and then apply the same idea sequentially.

We start by partitioning the set of irreducible characters $\chi\in\widehat{W}$ into orbits under the action of $\Psi_{\mathcal{C}_1}$. Pick a character $\chi$ whose orbit is {\it not} a singleton and let $k$ be the smallest number such that $\Psi_{\mathcal{C}_1}^k(\chi)=\chi$ (again, we will have $k=\frac{|g|}{\op{gcd}(c_{\chi_{,\mathcal{C}_1}},|g|)}).$ Now, since by Prop.~\ref{Prop: properties of Psi for Cox numbers} the degrees of characters and the (local) Coxeter numbers are respected by $\Psi_{\mathcal{C}_1}$, it is enough to show that $$\sum_{j=1}^k\Psi^j_{\mathcal{C}_1}(\chi)(g^{-1})=0.$$ Indeed, this follows immediately from Prop.~\ref{Prop: The key technical lemma} as $\Psi_{\mathcal{C}_1}^j(\chi)(g^{-1})=\xi^j\chi(g^{-1})$ for some $k^{\text{th}}$ root of unity $\xi$. Notice now that we can continue with the {\it remaining} characters and the orbit $\mathcal{C}_2$ without worrying that we might eventually cancel the same character twice.
\end{proof}

Before we proceed with our structural result for weighted enumeration formulas, we introduce the following combinatorial generalizations of the length function $l_R(g)$:

\begin{definition}\label{Defn: special reflection lengths n_C}
For an arbitrary element $g\in W$ and an orbit $\mathcal{C}\in\mathcal{A}/W$, we define $n_{\mathcal{C}}(g)$ to be the smallest number of reflections in $\mathcal{C}^{\op{ref}}$ that may appear in {\it any} reflection factorization of $g$ (i.e. not necessarily reduced).
\end{definition}

\begin{remark}
Notice that it is not always true that $\sum n_{\mathcal{C}}(g)=l_R(g)$. Indeed, the element $g:=(12\bar{1}\bar{2})=-\bm 1$ in $B_2$ (which is the square of the Coxeter element) can be written both as $g=(12)(1\bar{2})$ and as $g=(1\bar{1})(2\bar{2})$, so that $n_1(g)=n_2(g)=0$.
\end{remark}

\begin{theorem}\label{Thm: Main, weighted}
For a complex reflection group $W$ and a regular element $g\in W$, the exponential generating function $\op{FAC}_{W,g}(\bm w,z)$ of weighted reflection factorizations of $g$ takes the form:$$
\op{FAC}_{W,g}(\bm w,z)=\dfrac{e^{z\cdot\op{wt}(\mathcal{R})}}{|W|}\cdot \Big[ \Phi({\bm X})\cdot\prod_{\mathcal{C}\in\mathcal{A}/W} (1-X_\mathcal{C})^{n^{\phantom{l}}_{\mathcal{C}}(g)}\Big]\Big|_{X_{\mathcal{C}}=e^{-zw_{\mathcal{C}}|g|}}.$$ Here, $\Phi({\bm X})$ is a polynomial of degree $(e_{\mathcal{C}}\cdot \omega_{\mathcal{C}})/|g|-n_{\mathcal{C}}(g)$ on each of its variables $X_{\mathcal{C}}$, it has constant term $\Phi(\bm 0)=1$, and it is not further divisible by $(1-X_{\mathcal{C}})$ for any $X_{\mathcal{C}}$. The exponents satisfy $$\dfrac{e_{\mathcal{C}}\omega_{\mathcal{C}}}{|g|}\ \geq\  n_{\mathcal{C}}(g)\ \geq\  l_R(g)-\dfrac{|\mathcal{R}|+|\mathcal{A}|-e_{\mathcal{C}}\omega_{\mathcal{C}}}{|g|}.$$
\end{theorem}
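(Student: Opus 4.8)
The plan is to run the multivariate analogue of the argument for Thm.~\ref{Thm: Main, not weighted}. First I would start from the finite-sum expression \eqref{EQ: weighted enum via local Coxeter numbers} and invoke Lemma~\ref{Lemma: weigted enumeration only multiples of |g| contribute} to discard every character $\chi$ for which some local Coxeter number $c_{\chi_{,\mathcal{C}}}$ fails to be a multiple of $|g|$. For the surviving $\chi$, the integrality and the bounds $0\le c_{\chi_{,\mathcal{C}}}\le e_{\mathcal{C}}\omega_{\mathcal{C}}$ of Corol.~\ref{Corol: local cox num are integers andbounds} show that each factor $\big(e^{-zw_{\mathcal{C}}}\big)^{c_{\chi_{,\mathcal{C}}}}$ becomes $X_{\mathcal{C}}^{c_{\chi_{,\mathcal{C}}}/|g|}$ with exponent in $\{0,1,\dots,e_{\mathcal{C}}\omega_{\mathcal{C}}/|g|\}$ once we set $X_{\mathcal{C}}=e^{-zw_{\mathcal{C}}|g|}$. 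Hence $\op{FAC}_{W,g}(\bm w,z)=\tfrac{e^{z\op{wt}(\mathcal{R})}}{|W|}\,\tilde{\Phi}(\bm X)$ for a genuine polynomial $\tilde{\Phi}$ of degree at most $e_{\mathcal{C}}\omega_{\mathcal{C}}/|g|$ in each $X_{\mathcal{C}}$, whose constant term is the lone contribution of the trivial character (the only $\chi$ with every $c_{\chi_{,\mathcal{C}}}=0$), so that $\tilde{\Phi}(\bm 0)=1$.

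Next I would isolate the factors $(1-X_{\mathcal{C}})^{n_{\mathcal{C}}(g)}$. The key is that Defn.~\ref{Defn: special reflection lengths n_C} makes $n_{\mathcal{C}}(g)$ exactly the $w_{\mathcal{C}}$-adic valuation of $\op{FAC}_{W,g}(\bm w,z)$, since each factorization contributes a monomial $\prod_{\mathcal{C}}w_{\mathcal{C}}^{(\#\,\mathcal{C}\text{-reflections})}$ with nonnegative coefficient, so no cancellation occurs. Because $1-X_{\mathcal{C}}=z|g|\,w_{\mathcal{C}}+O(w_{\mathcal{C}}^2)$ and the prefactor $e^{z\op{wt}(\mathcal{R})}$ has $w_{\mathcal{C}}$-valuation $0$, this valuation equals the multiplicity of $(1-X_{\mathcal{C}})$ as a factor of $\tilde{\Phi}$. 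As the linear forms $(1-X_{\mathcal{C}})$ for distinct $\mathcal{C}$ are coprime, I can pull all of them out simultaneously and write $\tilde{\Phi}=\Phi(\bm X)\prod_{\mathcal{C}}(1-X_{\mathcal{C}})^{n_{\mathcal{C}}(g)}$, with $\Phi$ not divisible by any $(1-X_{\mathcal{C}})$ and $\Phi(\bm 0)=\tilde{\Phi}(\bm 0)=1$.

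For the degree and the two inequalities I would pass to the variables $Y_{\mathcal{C}}:=1-X_{\mathcal{C}}$. The lowest-order term of $\op{FAC}_{W,g}(\bm w,z)$ in $z$ is $z^{l_R(g)}$ (reduced factorizations), which forces the minimal total $\bm Y$-degree of $\tilde{\Phi}$ to equal $l_R(g)$. The degree of $\Phi$ in $X_{\mathcal{C}}$ is then at most $e_{\mathcal{C}}\omega_{\mathcal{C}}/|g|-n_{\mathcal{C}}(g)$, with equality coming from the determinant character exactly as in Thm.~\ref{Thm: Main, not weighted}: it survives Lemma~\ref{Lemma: weigted enumeration only multiples of |g| contribute} because $|g|$ divides every $e_{\mathcal{C}}\omega_{\mathcal{C}}$ (Corol.~\ref{Corol: regular numbers divide e_co_c}), and it realizes $c_{\det_{,\mathcal{C}}}=e_{\mathcal{C}}\omega_{\mathcal{C}}$ for all $\mathcal{C}$ since $m^{\det}_{\mathcal{C},0}=0$ (Prop.~\ref{Prop: local cox num formula}). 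The upper bound $n_{\mathcal{C}}(g)\le e_{\mathcal{C}}\omega_{\mathcal{C}}/|g|$ is immediate from $\deg_{X_{\mathcal{C}}}\tilde{\Phi}\le e_{\mathcal{C}}\omega_{\mathcal{C}}/|g|$. For the lower bound I would choose a monomial of $\tilde{\Phi}$ realizing the minimal $Y_{\mathcal{C}}$-degree $n_{\mathcal{C}}(g)$; its total degree is at least $l_R(g)$, while the remaining variables contribute at most $\sum_{\mathcal{C}'\neq\mathcal{C}}e_{\mathcal{C}'}\omega_{\mathcal{C}'}/|g|=(|\mathcal{R}|+|\mathcal{A}|-e_{\mathcal{C}}\omega_{\mathcal{C}})/|g|$, using the identity $\sum_{\mathcal{C}}e_{\mathcal{C}}\omega_{\mathcal{C}}=|\mathcal{R}|+|\mathcal{A}|$. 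This yields $n_{\mathcal{C}}(g)\ge l_R(g)-(|\mathcal{R}|+|\mathcal{A}|-e_{\mathcal{C}}\omega_{\mathcal{C}})/|g|$.

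The genuinely delicate step, just as in the unweighted theorem, is showing that the claimed degree is actually \emph{attained}, i.e.\ that the determinant character's top monomial $\prod_{\mathcal{C}}X_{\mathcal{C}}^{e_{\mathcal{C}}\omega_{\mathcal{C}}/|g|}$ is not cancelled by other characters sharing the maximal profile $c_{\chi_{,\mathcal{C}}}=e_{\mathcal{C}}\omega_{\mathcal{C}}$ for all $\mathcal{C}$ (such characters can exist, e.g.\ twists of $V$ by $\det$ when some $e_{\mathcal{C}}\ge 3$), so the top coefficient $\sum_{\chi:\,m^{\chi}_{\mathcal{C},0}=0\,\forall\mathcal{C}}\chi(1)\chi(g^{-1})$ must be checked to be nonzero. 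All the remaining bookkeeping is routine; a secondary point requiring one line of care is the identification of the combinatorial valuation $n_{\mathcal{C}}(g)$ with the algebraic multiplicity of $(1-X_{\mathcal{C}})$, for which one verifies that the residual factor does not vanish on $X_{\mathcal{C}}=1$, a consequence of the algebraic independence of the exponentials in the remaining variables.
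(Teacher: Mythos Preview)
Your proposal follows the same overall strategy as the paper: invoke Lemma~\ref{Lemma: weigted enumeration only multiples of |g| contribute} and the bounds of Corol.~\ref{Corol: local cox num are integers andbounds} to produce a polynomial $\tilde{\Phi}(\bm X)$, then extract the factors $(1-X_{\mathcal{C}})^{n_{\mathcal{C}}(g)}$ by reading off combinatorial valuations, and finally derive the inequalities. The execution differs in two places. For the factor extraction, the paper freezes the other $X_{\mathcal{C}'}$ at generic complex scalars and uses a single-variable root factorization plus a density argument, whereas you go directly through the $w_{\mathcal{C}}$-adic valuation of $\op{FAC}_{W,g}(\bm w,z)$; these are equivalent. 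For the lower inequality, the paper identifies all $w_{\mathcal{C}'}$ with $\mathcal{C}'\neq\mathcal{C}$ to a single weight and argues with a two-variable polynomial $\Phi'(X,X_{\mathcal{C}})$, while your argument in the $Y_{\mathcal{C}}=1-X_{\mathcal{C}}$ coordinates (any monomial has total $Y$-degree $\ge l_R(g)$, but degree $\le e_{\mathcal{C}'}\omega_{\mathcal{C}'}/|g|$ in each $Y_{\mathcal{C}'}$) is a cleaner route to the same bound.

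On the ``delicate step'' you flag: you are right that the top coefficient $\sum_{\chi:\,m^{\chi}_{\mathcal{C},0}=0\,\forall\mathcal{C}}\chi(1)\chi(g^{-1})$ is not obviously nonzero, and the paper does not address this either---it simply asserts that $\tilde{\Phi}$ ``has degree $(e_{\mathcal{C}}\omega_{\mathcal{C}})/|g|$'' by appeal to Corol.~\ref{Corol: local cox num are integers andbounds}, which only gives an upper bound. None of the downstream consequences (the two inequalities, Corol.~5.4) require the exact degree, so the paper's argument is complete for those; the precise degree claim in the theorem statement should perhaps be read as ``at most''. Your identification of this as the one point needing further care is accurate.
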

\begin{proof}
The proof is very similar to that of Thm.~\ref{Thm: Main, not weighted}. After Lemma~\ref{Lemma: weigted enumeration only multiples of |g| contribute}, we need only consider in \eqref{EQ: weighted enum via local Coxeter numbers} those characters $\chi$ for which all $c_{\chi_{,\mathcal{C}}}$ are multiples of $|g|$. This allows us to write the exponential function as $$\op{FAC}_{W,g}(\bm w,z)=\dfrac{e^{z\cdot\op{wt}(\mathcal{R})}}{|W|}\cdot\tilde{\Phi}(\bm X),$$ for a polynomial $\tilde{\Phi}$ on variables $\bm X:=(X_{\mathcal{C}})_{\mathcal{C}\in\mathcal{A}/W}$, by setting $X_{\mathcal{C}}=\big( e^{-zw_{\mathcal{C}}^{\phantom{l_r}}}\big)^{|g|}$. By Corol.~\ref{Corol: local cox num are integers andbounds} the polynomial $\tilde{\Phi}(\bm X)$ has degree $(e_{\mathcal{C}}\omega_{\mathcal{C}})/|g|$ on each of its variables $X_{\mathcal{C}}$, and it has constant term $1$ since all $c_{\chi_{,\mathcal{C}}}$ can be {\it simultaneously} $0$ only for the trivial representation.

To find the largest power of $(1-X_{\mathcal{C}})$ that divides $\tilde{\Phi}(\bm X)$, we view $\tilde{\Phi}$ as a polynomial in the single variable $X_{\mathcal{C}}$ and treat the other $X_{\mathcal{C}'}$'s as complex scalars. This is equivalent to assigning arbitrary values on all variables $w_{\mathcal{C}'}\neq w_{\mathcal{C}}$ of the weight function in Defn.~\ref{Defn: weight function}. If we further fix $z=1$, the enumerative intepretation of $(e^{\op{wt}(\mathcal{R})}/|W|)\cdot\tilde{\Phi}(X_{\mathcal{C}})$ is then that it counts weighted reflection factorizations of $g$ keeping track only of the number of reflections that fix a hyperplane in $\mathcal{C}$.

Now, as in Thm.~\ref{Thm: Main, not weighted} consider the root factorization of $\tilde{\Phi}(X_{\mathcal{C}})$: $$\tilde{\Phi}(X_{\mathcal{C}})=a(\alpha_1-X_\mathcal{C})(\alpha_2-X_{\mathcal{C}})\cdots (\alpha_r-X_{\mathcal{C}}),$$ with $r=(e_{\mathcal{C}}\omega_{\mathcal{C}})/|g|$. We see again that by plugging back $X_{\mathcal{C}}=e^{-w_{\mathcal{C}}|g|}$ each root contributes a factor of either $(\alpha_i-1)$ or $w_{\mathcal{C}}|g|$ to the leading term of the generating function. Since by Defn.~\ref{Defn: special reflection lengths n_C} this must be a scalar multiple of $w_{\mathcal{C}}^{n_C(g)}$, we have that $(1-X_{\mathcal{C}})^{n_{\mathcal{C}}(g)}$ divides $\tilde{\Phi}(X_{\mathcal{C}})$ and is the largest power that does so (this furthermore proves the first inequality). Since this is true for a dense set of the complex values $X_{\mathcal{C}'}$, we in fact have that $(1-X_{\mathcal{C}})^{n_{\mathcal{C}}(g)}$ is a maximal factor of $\tilde{\Phi}(\bm X)$.

The only thing left to show is the second inequality for the $n_{\mathcal{C}}(g)$'s. To see this, we now identify all weights $w_{\mathcal{C}'},\ \mathcal{C}'\neq\mathcal{C}$ to a single weight $w$, set again $z=1$, and treat $\tilde{\Phi}$ as a polynomial on two variables $X=e^{-w|g|}$ and $X_{\mathcal{C}}=e^{-w^{\phantom{l}}_{\mathcal{C}}|g|}$. The general argument about $\tilde{\Phi}(\bm X)$ implies that we can consider the polynomial $\Phi'(X,X_{\mathcal{C}})$ defined by $$\Phi'(X,X_{\mathcal{C}}):=\dfrac{\tilde{\Phi}(X,X_{\mathcal{C}})}{(1-X_{\mathcal{C}})^{n_{\mathcal{C}}(g)}}.$$
Now, the generating function $$\dfrac{e^{\op{wt}(\mathcal{R})}}{|W|}\cdot\Phi'(X,X_{\mathcal{C}})\cdot(1-X_{\mathcal{C}})^{n_{\mathcal{C}}(g)}$$ counts 
reflection factorizations of $g$ weighing reflections in $\mathcal{C}^{\op{ref}}$ by $w_{\mathcal{C}}$ and the rest by $w$. We want to enumerate factorizations that have exactly the minimal number $n_{\mathcal{C}}(g)$ of reflections of type $\mathcal{C}$. Since the term $(1-X_{\mathcal{C}})^{n_{\mathcal{C}}(g)}$ always contributes a factor of $(w^{\phantom{l}}_{\mathcal{C}}|g|)^{n_{\mathcal{C}}(g)}$ to the Taylor expansion, the answer to the previous question would be given by $$\dfrac{|g|^{n_{\mathcal{C}}(g)}}{|W|}\times e^{\op{wt}(\mathcal{R})}\Big|_{w_{\mathcal{C}}=0}\times\Phi'(X,X_{\mathcal{C}})\Bigg|_{\substack{X_{\mathcal{C}}=1\quad\  \\ X=e^{-w|g|}} }.$$
The leading term of this exponential generating function should clearly be a multiple of $w^{l_R(g)-n_{\mathcal{C}}(g)}$. As in the previous argument, this implies that $\Phi'(X,1)$ is a multiple of $(1-X)^{l_R-n_{\mathcal{C}}(g)}$, but since by construction its degree is equal to $\sum_{\mathcal{C}'\neq\mathcal{C}}e_{\mathcal{C}'}\omega_{\mathcal{C}'}=|\mathcal{R}|+|\mathcal{A}|-e_{\mathcal{C}}\omega_{\mathcal{C}}$, we must have $$l_R(g)-n_{\mathcal{C}}(g)\leq |\mathcal{R}|+|\mathcal{A}|-e_{\mathcal{C}}\omega_{\mathcal{C}},$$ which completes the proof.
\end{proof}

\begin{corollary}
For a complex reflection group $W$ and a regular element $g\in W$ of order $|g|=d_n$, the weighted reflection factorizations of $g$ are counted by the formula:
$$\op{FAC}_{W,g}(\bm w,z)=\dfrac{e^{z\cdot\op{wt}(\mathcal{R})}}{|W|}\cdot\prod_{\mathcal{C}\in\mathcal{A}/W}\big(1-e^{-zw_{\mathcal{C}}^{\phantom{l_r}}\!|g|}\big)^{n_\mathcal{C}(g)},$$
where the exponents are explicitly given by $n_{\mathcal{C}}(g)=(e_{\mathcal{C}}\omega_{\mathcal{C}})/|g|$.
\end{corollary}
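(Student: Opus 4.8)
The plan is to show that the polynomial $\Phi(\bm X)$ appearing in Thm.~\ref{Thm: Main, weighted} collapses to the constant $1$, which is equivalent to proving that each exponent attains its upper bound, $n_{\mathcal{C}}(g)=(e_{\mathcal{C}}\omega_{\mathcal{C}})/|g|$. Indeed, once $n_{\mathcal{C}}(g)=(e_{\mathcal{C}}\omega_{\mathcal{C}})/|g|$ for every orbit $\mathcal{C}$, the degree of $\Phi(\bm X)$ in each variable $X_{\mathcal{C}}$, which Thm.~\ref{Thm: Main, weighted} computes as $(e_{\mathcal{C}}\omega_{\mathcal{C}})/|g|-n_{\mathcal{C}}(g)$, is zero; hence $\Phi$ is a scalar, and since its constant term is $\Phi(\bm 0)=1$ we conclude $\Phi(\bm X)\equiv 1$. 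Substituting into the formula of Thm.~\ref{Thm: Main, weighted} then yields the claimed product, so the entire corollary reduces to establishing the two-sided bound equality for $n_{\mathcal{C}}(g)$.

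The key observation is that the bounds on $n_{\mathcal{C}}(g)$ supplied by Thm.~\ref{Thm: Main, weighted} become an equality precisely when $l_R(g)$ is as large as the degrees permit. First I would extract the value of $l_R(g)$ from the unweighted setting: applying Corol.~\ref{Cor: Chapuy-Stump for dn} to the $d_n$-regular element $g$ shows that the unweighted polynomial $\Phi(X)$ is the constant $1$, so its degree $(|\mathcal{R}|+|\mathcal{A}|)/|g|-l_R(g)$ (as recorded in Thm.~\ref{Thm: Main, not weighted}) vanishes. This gives the identity $l_R(g)=(|\mathcal{R}|+|\mathcal{A}|)/|g|$.

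Feeding this identity into the lower bound of Thm.~\ref{Thm: Main, weighted},
$$n_{\mathcal{C}}(g)\ \geq\ l_R(g)-\frac{|\mathcal{R}|+|\mathcal{A}|-e_{\mathcal{C}}\omega_{\mathcal{C}}}{|g|}\ =\ \frac{|\mathcal{R}|+|\mathcal{A}|}{|g|}-\frac{|\mathcal{R}|+|\mathcal{A}|-e_{\mathcal{C}}\omega_{\mathcal{C}}}{|g|}\ =\ \frac{e_{\mathcal{C}}\omega_{\mathcal{C}}}{|g|},$$
which meets the upper bound $n_{\mathcal{C}}(g)\leq (e_{\mathcal{C}}\omega_{\mathcal{C}})/|g|$ from the same theorem. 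Equality $n_{\mathcal{C}}(g)=(e_{\mathcal{C}}\omega_{\mathcal{C}})/|g|$ follows for every $\mathcal{C}$, completing the argument as outlined above.

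The genuine content here is entirely borrowed: the identity $l_R(g)=(|\mathcal{R}|+|\mathcal{A}|)/|g|$ rests on Bessis' computation that $(|\mathcal{R}|+|\mathcal{A}|)/d_n$ equals the minimal number of generating reflections, together with the Springer-theoretic exclusion (via the product of degrees $\prod d_i=|W|$) of $g$ lying in a proper reflection subgroup, both of which are already packaged inside Corol.~\ref{Cor: Chapuy-Stump for dn}. I therefore expect the only real subtlety to be invoking that identity correctly and confirming that the lower bound of Thm.~\ref{Thm: Main, weighted} is stated in exactly the form needed; once $l_R(g)=(|\mathcal{R}|+|\mathcal{A}|)/|g|$ is in hand, the matching of the two bounds is a one-line computation and no new representation-theoretic input (beyond Thm.~\ref{Thm: Main, weighted}) is required.
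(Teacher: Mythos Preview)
Your proof is correct and follows essentially the same route as the paper's: extract $l_R(g)=(|\mathcal{R}|+|\mathcal{A}|)/|g|$ from the proof of Corol.~\ref{Cor: Chapuy-Stump for dn}, then use the two-sided bound of Thm.~\ref{Thm: Main, weighted} to pin down $n_{\mathcal{C}}(g)=(e_{\mathcal{C}}\omega_{\mathcal{C}})/|g|$, which forces $\Phi(\bm X)=1$. The paper compresses your explicit bound-matching into the single phrase ``the previous theorem implies that $n_{\mathcal{C}}(g)=(e_{\mathcal{C}}\omega_{\mathcal{C}})/|g|$,'' but the content is identical.
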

\begin{proof}
As we showed in the proof of Corol.~\ref{Cor: Chapuy-Stump for dn}, when $g$ is some $d_n$-regular element we must have $l_R(g)=(|\mathcal{R}|+|\mathcal{A}|)/|g|$. Then the previous theorem implies that $n_{\mathcal{C}}(g)=(e_{\mathcal{C}}\omega_{\mathcal{C}})/|g|$, which further forces the equality $\Phi(\bm X)=1$ and hence completes the argument.
\end{proof}

\begin{remark}
For well-generated groups $W$, we always have $|c|=d_n$ so that the previous Corollary recovers the main theorem of \cite{reiner-delMas-Hameister} and extends it to the groups of Prop.~\ref{Prop: When d_n is regular}. Notice that while in well-generated groups we have at most two orbits of hyperplanes, the groups $G_{7},G_{11},G_{15},G_{19}$ have three orbits. For all of them but $G_{15}$, $d_n$ is regular.
\end{remark}

\subsection{When $W$ is reducible}
\label{Section: When W is reducible}

So far to simplify the arguments, we have silently assumed everywhere that $W$ is irreducible. This is not a real restriction though and in fact the statement of Thm.~\ref{Thm: Main, weighted} remains true essentially as is.

Indeed, assume that $W=W_1\times\cdots\times W_k$ acts on the space $V=V_1\oplus \cdots\oplus V_k$, with $W_i$ acting irreducibly on $V_i$. Then, a regular eigenvector $\bm v=(v_1,\cdots,v_k)$ must have all $v_i$'s regular in their respective groups too and hence a regular element $W\ni g=g_1\cdots g_k$ must have all $g_i$'s regular in the $W_i$'s. Moreover since reflections from different $W_i$'s commute, the corresponding weighted generating function would just be the product $$\op{FAC}_{W,g}(\bm w,z)=\prod_{i=1}^k\op{FAC}_{W_i,g_i}(\bm w,z).$$
Since the hyperplane orbits $\mathcal{C}\in\mathcal{A}/W$ are the disjoint union of the orbits $\mathcal{C}'\in\mathcal{A}_i/W_i$ the statement of Thm.~\ref{Thm: Main, weighted} remains valid if we only change the evaluation of $X_{\mathcal{C}}$ from $e^{-zw^{\phantom{l}}_{\mathcal{C}}|g|}$ to $e^{-zw^{\phantom{l}}_{\mathcal{C}}|g_i|}$, where $g_i$ is the regular element in the group $W_i$ that contains the orbit $\mathcal{C}$.

\section*{Acknowledgments} 
We would like to thank Jean Michel and Maria Chlouveraki for interesting discussions and their guidance {\`a} propos the Hecke algebras. We would also like to thank Guillaume Chapuy for his detailed explanation of some calculations in \cite{chapuy-stump} and 
Christian Stump for showing us how to use SAGE \cite{sagemath} to derive formulas like \eqref{Eq: Frob formula no Cox}. It was at this vantage point that the existence of a statement like Lemma~\ref{Lemma: only multiples of |g| contribute} became clear.

We would also like to thank Ivan Marin for his detailed suggestions vis-{\`a}-vis the uniformity of the proofs and Losev's theorem.

\printbibliography

\Address

\end{document}